\newtheorem{thm}{Theorem}[section]
\newtheorem{lemma}[thm]{Lemma}
\newtheorem{proposition}[thm]{Proposition}
\newtheorem{definition}[thm]{Definition}
\newtheorem{notation}[thm]{Notation}
\newcommand{\p}{\mathbb{P}}
\newcommand{\q}{\mathbb{Q}}
\newcommand{\cf}{\mathrm{cf}}
\newcommand{\cof}{\mathrm{cof}}
\newcommand{\dom}{\mathrm{dom}}
\begin{document}

\title{Forcing with Adequate Sets of Models as Side Conditions}

\author{John Krueger}

\address{Department of Mathematics \\ 
University of North Texas \\
1155 Union Circle \#311430 \\
Denton, TX 76203}

\email{jkrueger@unt.edu}

\begin{abstract}
We present a general framework for forcing on $\omega_2$ 
with finite conditions using countable models as side conditions. 
This framework is based on a method of comparing 
countable models as being membership 
related up to a large initial segment. 
We give several examples of this type of forcing, 
including adding a function on $\omega_2$, 
adding a nonreflecting stationary subset of $\omega_2 \cap \cof(\omega)$, 
and adding an $\omega_1$-Kurepa tree.
\end{abstract}

\maketitle

The method of forcing with countable models as side conditions was introduced 
by {Todor\v cevi\' c} (\cite{todor}). 
The original method is useful for forcing with finite conditions to add a generic 
object of size $\omega_1$. 
The preservation of $\omega_1$ is achieved by including finitely many 
countable elementary 
substructures as a part of a forcing condition. 
The models which appear in a condition are related by membership. 
So a condition in such a forcing poset includes a finite 
approximation of the object to be added, together with a finite 
$\in$-increasing chain of models, with some relationship specified 
between the finite fragment and the models.

Friedman (\cite{friedman}) and Mitchell (\cite{mitchell1}, \cite{mitchell2}) 
independently lifted this method up to $\omega_2$ by showing how to 
add a club subset of $\omega_2$ with finite conditions. 
In the process of going from $\omega_1$ to $\omega_2$, they gave up the 
requirement that models appearing in a forcing condition are membership related,
replacing it with a more complicated relationship between the models. 
Later Neeman (\cite{neeman}) developed 
a general approach to the subject of forcing with finite conditions 
on $\omega_2$. 
A major feature of Neeman's approach is that a condition 
in his type of forcing poset 
includes a finite $\in$-increasing chain of models, similar to 
{Todor\v cevi\' c}'s original idea, but he includes both countable and uncountable 
models in his conditions, rather than just countable models. 
Other recent papers in which side conditions are used to add objects of size 
$\omega_2$ include \cite{cummings}, 
\cite{mirna}, \cite{kos}, and \cite{boban}.

In this paper we present a general framework for forcing a generic 
object on $\omega_2$ 
with finite conditions, using countable models as side conditions. 
This framework is based on a method for comparing 
elementary substructures 
which, while not as simple as comparing by membership, 
is still natural. 
Namely, the countable models appearing in a condition will be membership 
comparable up to a large initial segment. 
The largeness of the initial segment is measured by the fact that above 
the point of comparison, the models have only a finite amount of disjoint overlap. 
We give several examples of this kind of forcing poset, including adding a 
generic function on $\omega_2$, 
adding a nonreflecting stationary subset of $\omega_2 \cap \cof(\omega)$, 
and adding an $\omega_1$-Kurepa tree. 
Since these three kinds of objects can be forced using classical methods, 
the purpose of these examples is to illustrate the method, rather than proving 
new consistency results.

\bigskip
\bigskip

This is the first in a series of papers which develop the adequate 
set approach to forcing with side conditions on $\omega_2$ 
(\cite{jk22}, \cite{jk23}, \cite{jk24}, \cite{jk25}). 
While many of the arguments appearing here could, with some work, 
be subsumed 
in the previous frameworks of Friedman, Mitchell, and Neeman, this 
paper is important for presenting the basic ideas of adequate sets 
in a way which provides a foundation for further developments.

The most important idea introduced in the paper is the parameter 
$\beta_{M,N}$, which is called 
the \emph{comparison point} of models $M$ and $N$. 
The definition of this parameter is new and does not appear explicitly 
in previous work of other authors on the subject. 
The comparision point $\beta_{M,N}$ is the basic idea behind our method for 
comparing models.

Sections 1--4 develop our framework for forcing with adequate sets 
as side conditions. 
The main goal is to develop machinery for amalgamating conditions 
over elementary substructures, which is used to preserve cardinals. 
The arguments we give for amalgamation have substantial overlap with the 
arguments for cardinal preservation of Friedman \cite{friedman} and Mitchell \cite{mitchell2}.

Sections 5--7 provide three examples of forcing posets defined with adequate sets 
as side conditions. The most important of these are adding a nonreflecting 
stationary subset of $\omega_2$ and adding an $\omega_1$-Kurepa tree. 
These applications have not appeared previously in the literature on 
forcing with finite conditions.

Our framework can be considered as 
an alternative general approach to forcing with 
finite conditions to that presented by Neeman \cite{neeman}. 
There are some equivalences between the approaches at the basic level. 
The countable models appearing in a Neeman style side condition constitute 
an adequate set, and an adequate set can be enlarged in some sense 
to a Neeman side condition. 
However, subsequent directions and generalizations 
of the theory of adequate sets, such as those in 
\cite{jk23} and \cite{jk25}, are 
incomparable with the method presented in \cite{neeman}. 
For example, forcing with adequate sets of models on $H(\lambda)$, where 
$\lambda > \omega_2$, preserves cardinals larger than 
$\omega_2$, whereas adding a 
Neeman sequence of models in $H(\lambda)$ collapses $H(\lambda)$ 
to have size $\omega_2$. 
Also coherent adequate set forcing preserves 
\textsf{CH} (\cite{jk25}), 
whereas posets defined 
in the framework of \cite{neeman} will always force that $2^\omega > \omega_1$.

I would like to thank Thomas Gilton for reading an earlier 
version of the paper and making comments and suggestions.

\section{Background Assumptions and Notation}

We make two background assumptions and fix notation for the remainder 
of the paper.

\bigskip

\noindent \textbf{Assumption 1}: $2^{\omega_1} = \omega_2$.

\bigskip

So $H(\omega_2)$ has size $\omega_2$. 

\begin{notation}
Fix a bijection $\pi : \omega_2 \to H(\omega_2)$.
\end{notation}

The importance of assumption 1 is that it implies that countable elementary 
substructures of $(H(\omega_2),\in,\pi)$ are determined by their set of ordinals. 
This allows us to use countable sets of ordinals as side conditions, instead of 
countable elementary substructures. 
An important consequence is that the forcing posets defined in this paper have 
size $\omega_2$, and hence preserve cardinals greater than $\omega_2$.

\bigskip

\noindent \textbf{Assumption 2}: There exists a stationary set 
$\mathcal Y \subseteq P_{\omega_1}(\omega_2)$ such that for all 
$\beta < \omega_2$, the set $\{ a \cap \beta : a \in \mathcal Y \}$ has size 
at most $\omega_1$.

\bigskip

A set $\mathcal Y$ as described in assumption 2 is called \emph{thin}. 
Friedman \cite{friedman} introduced the use of thin stationary sets in the 
context of forcing with models as side conditions when he used such a set 
to construct a forcing poset with finite conditions 
for adding a club to a fat stationary subset of $\omega_2$. 
Krueger \cite{krueger} proved that the existence of 
a thin stationary set does not follow from 
\textsf{ZFC}; for example, it is false 
under Martin's Maximum.
On the other hand, if \textsf{CH} holds, 
then the set $P_{\omega_1}(\omega_2)$ itself is 
thin and stationary.

Note that if $\mathcal Y$ is thin and stationary, then so is the set 
$\{ a \cap \beta : a \in \mathcal Y, \ \beta < \omega_2 \}$. 
Hence without loss of generality 
we will assume that $\mathcal Y$ is closed under initial segments. 
So for all $\beta < \omega_2$, 
$\{ a \cap \beta : a \in \mathcal Y \} = \mathcal Y \cap P(\beta)$.

\begin{notation}
Let $\mathcal A$ denote the structure $(H(\omega_2),\in,\pi,\mathcal Y)$.
\end{notation}

Since $\pi : \omega_2 \to H(\omega_2)$ is a bijection, 
if $N \prec \mathcal A$ then $N = \pi[N \cap \omega_2]$. 
Note that $\pi$ induces a definable well-ordering, and hence definable 
Skolem functions, for $\mathcal A$. 
For a set $a \subseteq H(\omega_2)$, let 
$Sk(a)$ denote the closure of $a$ under some fixed set of 
definable Skolem functions for $\mathcal A$.

\begin{lemma}
For $a \subseteq \omega_2$, $Sk(a) \cap \omega_2 = a$ iff 
$Sk(a) = \pi[a]$.
\end{lemma}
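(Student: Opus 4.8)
The plan is to reduce both implications to the single observation, already recorded in the paragraph preceding the lemma, that every elementary substructure $N \prec \mathcal{A}$ satisfies $N = \pi[N \cap \omega_2]$ (this follows since $\pi$ is a bijection $\omega_2 \to H(\omega_2)$ belonging to the structure $\mathcal{A}$). First I would point out that $Sk(a)$ is itself an elementary substructure of $\mathcal{A}$: by construction it is closed under a fixed set of definable Skolem functions for $\mathcal{A}$, so the Tarski--Vaught criterion applies. Hence the displayed equality specializes to $Sk(a) = \pi[Sk(a) \cap \omega_2]$, and this one identity drives everything.

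For the forward direction, assuming $Sk(a) \cap \omega_2 = a$, I substitute directly into $Sk(a) = \pi[Sk(a) \cap \omega_2]$ to obtain $Sk(a) = \pi[a]$. For the converse, assuming $Sk(a) = \pi[a]$, I combine this with $Sk(a) = \pi[Sk(a) \cap \omega_2]$ to get $\pi[a] = \pi[Sk(a) \cap \omega_2]$; since both $a$ and $Sk(a) \cap \omega_2$ are subsets of $\omega_2 = \dom(\pi)$ and $\pi$ is a bijection, injectivity of $\pi$ gives $a = Sk(a) \cap \omega_2$.

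I do not anticipate a real obstacle here. The only two points needing any care are the verification that $Sk(a) \prec \mathcal{A}$ — which is exactly what makes the factorization of $Sk(a)$ through $\pi$ available — and the appeal to injectivity of $\pi$ in order to cancel it on the right in the converse direction; the rest is immediate substitution.
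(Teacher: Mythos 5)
Your argument is correct and follows essentially the same route as the paper: both proofs hinge on the single identity $Sk(a) = \pi[Sk(a) \cap \omega_2]$, proving the forward direction by substitution and the converse by injectivity of $\pi$. Your extra remark that $Sk(a) \prec \mathcal{A}$ (via Tarski--Vaught) is exactly the justification the paper leaves implicit when it cites the preceding observation.
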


\begin{proof}
As just observed, 
$Sk(a) = \pi[Sk(a) \cap \omega_2]$. 
So if $Sk(a) \cap \omega_2 = a$, then $Sk(a) = \pi[a]$. 
Conversely, if $Sk(a) = \pi[a]$, then 
$\pi[a] = Sk(a) = \pi[Sk(a) \cap \omega_2]$. 
Since $\pi$ is one-to-one, 
the equation $\pi[a] = \pi[Sk(a) \cap \omega_2]$ 
implies that $a = Sk(a) \cap \omega_2$.
\end{proof}

\begin{lemma}
Suppose $a, b \subseteq \omega_2$, $Sk(a) \cap \omega_2 = a$, and 
$Sk(b) \cap \omega_2 = b$. 
Then $Sk(a) \cap Sk(b) = Sk(a \cap b)$.
\end{lemma}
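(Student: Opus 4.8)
The plan is to prove the two inclusions $Sk(a \cap b) \subseteq Sk(a) \cap Sk(b)$ and $Sk(a) \cap Sk(b) \subseteq Sk(a \cap b)$ separately, with the second being the substantive direction. For the first inclusion, I would argue that $a \cap b \subseteq a$ implies $Sk(a \cap b) \subseteq Sk(a)$, since $Sk$ is monotone (being closure under a fixed collection of functions), and symmetrically $Sk(a \cap b) \subseteq Sk(b)$; hence $Sk(a \cap b) \subseteq Sk(a) \cap Sk(b)$. This requires no hypotheses on $a$ and $b$ and is entirely routine.

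For the reverse inclusion, the key observation is that by Lemma 1.4 (the preceding lemma), the hypotheses $Sk(a) \cap \omega_2 = a$ and $Sk(b) \cap \omega_2 = b$ give us $Sk(a) = \pi[a]$ and $Sk(b) = \pi[b]$. Since $\pi$ is a bijection, $\pi[a] \cap \pi[b] = \pi[a \cap b]$. Therefore $Sk(a) \cap Sk(b) = \pi[a \cap b]$. Now I need to compare $\pi[a \cap b]$ with $Sk(a \cap b)$. First note $a \cap b \subseteq \omega_2$, so the preceding lemma applies to $c := a \cap b$: it tells us that $Sk(a \cap b) \cap \omega_2 = a \cap b$ is equivalent to $Sk(a \cap b) = \pi[a \cap b]$. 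So it suffices to show $Sk(a \cap b) \cap \omega_2 = a \cap b$. The inclusion $a \cap b \subseteq Sk(a \cap b) \cap \omega_2$ is immediate since $a \cap b \subseteq \omega_2$ and $a \cap b \subseteq Sk(a \cap b)$. For the other direction, take $\gamma \in Sk(a \cap b) \cap \omega_2$; by the first inclusion proved above, $Sk(a \cap b) \subseteq Sk(a)$, so $\gamma \in Sk(a) \cap \omega_2 = a$, and similarly $\gamma \in Sk(b) \cap \omega_2 = b$, hence $\gamma \in a \cap b$. This shows $Sk(a \cap b) \cap \omega_2 = a \cap b$, so by the preceding lemma $Sk(a \cap b) = \pi[a \cap b] = Sk(a) \cap Sk(b)$, completing the reverse inclusion.

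The main obstacle — though it is more a matter of organizing the bookkeeping than a genuine difficulty — is recognizing that one must verify the hypothesis $Sk(a \cap b) \cap \omega_2 = a \cap b$ in order to invoke the preceding lemma for $a \cap b$, and that this verification itself relies on the easy monotonicity inclusion together with the hypotheses on $a$ and $b$. Once that is seen, everything reduces to the elementary fact that a bijection commutes with intersection of images. Combining the two inclusions yields $Sk(a) \cap Sk(b) = Sk(a \cap b)$, as desired.
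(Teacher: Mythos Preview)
Your proof is correct and follows essentially the same route as the paper: both arguments use the preceding lemma to rewrite $Sk(a)$ and $Sk(b)$ as $\pi[a]$ and $\pi[b]$, use injectivity of $\pi$ to get $Sk(a)\cap Sk(b)=\pi[a\cap b]$, and then verify $Sk(a\cap b)\cap\omega_2=a\cap b$ (via monotonicity of $Sk$ and the hypotheses on $a,b$) in order to apply the preceding lemma once more and conclude $Sk(a\cap b)=\pi[a\cap b]$. The only cosmetic difference is that you frame the argument as two inclusions while the paper proves the equality directly; note also that the lemma you cite as ``Lemma 1.4 (the preceding lemma)'' is actually Lemma 1.3 in the paper's numbering.
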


\begin{proof}
By the previous lemma, $Sk(a) \cap Sk(b) = \pi[a] \cap \pi[b]$, which is 
equal to $\pi[a \cap b]$ since $\pi$ is injective. 
So it is enough to show that $\pi[a \cap b] = Sk(a \cap b)$. 
For this it suffices to show that $Sk(a \cap b) \cap \omega_2 = a \cap b$ 
by the previous lemma. 
Clearly $a \cap b \subseteq Sk(a \cap b) \cap \omega_2$. 
Conversely, $Sk(a \cap b) \cap \omega_2 \subseteq 
(Sk(a) \cap Sk(b)) \cap \omega_2 
= (Sk(a) \cap \omega_2) \cap (Sk(b) \cap \omega_2) = 
a \cap b$.
\end{proof}

\begin{notation}
Let $C$ denote the set of $\beta < \omega_2$ such that 
$Sk(\beta) \cap \omega_2 = \beta$.
\end{notation}

Clearly $C$ is a club.

\begin{notation}
Let $\Lambda$ denote the set of $\beta$ in $\omega_2 \cap \cof(\omega_1)$ 
such that $\beta$ is a limit point of $C$.
\end{notation}

Now we define the set $\mathcal X$ of models which will be used 
in our forcing posets.

\begin{notation}
Let $\mathcal X$ denote the set of $M \in \mathcal Y$ such that 
$Sk(M) \cap \omega_2 = M$ and for all $\gamma \in M$, 
$\sup(C \cap \gamma) \in M$.
\end{notation}

Note that $\mathcal X$ is stationary. 
If $M \in \mathcal X$, then by Lemma 1.3, $Sk(M) = \pi[M]$. 
We will sometimes refer to elements $M$ of $\mathcal X$ as \emph{models}, 
although when we do so we are informally identifying $M$ with $Sk(M)$. 
The assumption that $M$ is closed under the function which maps 
$\gamma$ to $\sup(C \cap \gamma)$ is used in Lemma 2.11, which in 
turn is used to prove Proposition 2.12.

\begin{lemma}
Let $M$ and $N$ be in $\mathcal X$, and suppose that $M \in Sk(N)$. 
Then $Sk(M) \in Sk(N)$.
\end{lemma}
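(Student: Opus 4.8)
The plan is to exploit elementarity of $Sk(N)$ in the structure $\mathcal{A} = (H(\omega_2),\in,\pi,\mathcal{Y})$ together with the fact that the map $x \mapsto Sk(x)$ (equivalently, by Lemma 1.3, the map $x \mapsto \pi[x]$) is first-order definable over $\mathcal{A}$. First I would record that since $M \in \mathcal{X} \subseteq \mathcal{Y} \subseteq P_{\omega_1}(\omega_2)$, the set $M$ is countable; hence $Sk(M)$ is a countable subset of $H(\omega_2)$, so its transitive closure has size at most $\omega_1$, and therefore $Sk(M) \in H(\omega_2)$. This is the only place where a genuine property of $\mathcal{X}$ (countability) is used, and it is what makes it meaningful to look for $Sk(M)$ as an element of $Sk(N)$.

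Next I would observe that $Sk(N) \prec \mathcal{A}$ (automatic, since $Sk(N)$ is by definition closed under a set of Skolem functions for $\mathcal{A}$), and that the Skolem-hull operation is definable over $\mathcal{A}$: the fixed Skolem functions are each definable over $\mathcal{A}$ from $\pi$, and $Sk(a)$ is the union of the $\omega$-length chain $a = a_0 \subseteq a_1 \subseteq \cdots$ obtained by closing under finitely many applications of these functions, a construction carried out internally in $H(\omega_2)$ and absolute between $H(\omega_2)$ and $V$ for countable $a$. So there is a formula $\varphi(x,y)$ in the language of $\mathcal{A}$ asserting ``$y = Sk(x)$'', and for countable $x \in H(\omega_2)$ there is a (unique) $y$ with $\mathcal{A} \models \varphi(x,y)$. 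Since $M \in \mathcal{X}$, one may equivalently work with $\pi$ directly: by Lemma 1.3, $Sk(M) = \pi[M]$, and ``$y = \pi[M]$'' is first-order over $\mathcal{A}$ because $\pi$ is a relation of the structure.

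The argument then concludes in one line. Since $M \in Sk(N)$ and $\mathcal{A} \models \exists y\,\varphi(M,y)$ (witnessed by $Sk(M) \in H(\omega_2)$), elementarity of $Sk(N)$ yields some $y \in Sk(N)$ with $\mathcal{A} \models \varphi(M,y)$; by uniqueness $y = Sk(M)$, so $Sk(M) \in Sk(N)$. (In the $\pi$-formulation: $M \in Sk(N)$ gives $\pi[M] \in Sk(N)$ by elementarity, and $\pi[M] = Sk(M)$ by Lemma 1.3.)

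I do not anticipate a real obstacle; the only subtleties are the two checks made above, namely that $Sk(M)$ actually lies in $H(\omega_2)$ (using countability of members of $\mathcal{X}$) and that the Skolem-hull operation is first-order definable over $\mathcal{A}$ so that elementarity of $Sk(N)$ can be applied. Notably, neither the closure condition ``$\sup(C \cap \gamma) \in M$ for all $\gamma \in M$'' nor the condition $Sk(M) \cap \omega_2 = M$ plays any role here beyond the latter's use, via Lemma 1.3, in identifying $Sk(M)$ with $\pi[M]$.
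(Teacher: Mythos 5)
Your $\pi$-formulation is exactly the paper's proof: since $M \in \mathcal X$, Lemma 1.3 gives $Sk(M) = \pi[M]$, and $\pi[M]$ is definable over $\mathcal A$ from $M$ by a simple formula (using $\pi$ as a relation symbol of the structure), so elementarity of $Sk(N)$ puts $\pi[M] = Sk(M)$ into $Sk(N)$. Your concluding observation that only the $Sk(M) \cap \omega_2 = M$ part of ``$M \in \mathcal X$'' is actually used is also accurate.

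However, the primary formulation you lead with --- that there is a single formula $\varphi(x,y)$ over $\mathcal A$ expressing ``$y = Sk(x)$'' --- does not hold up. The Skolem functions for $\mathcal A$ are each individually definable, but there are infinitely many of them (one per formula), and saying ``$y$ is the closure of $x$ under all Skolem functions'' requires uniformly quantifying over formulas, i.e.\ referring to the satisfaction relation for $\mathcal A$, which by Tarski's theorem is not definable over $\mathcal A$ itself. The absoluteness and countability points you raise are fine, but they do not rescue the uniform definability claim. This is precisely why the paper does not try to define $Sk(\cdot)$ as an operation: it instead uses the extra hypothesis $M \in \mathcal X$ to replace $Sk(M)$ by $\pi[M]$, which has a trivial definition over $\mathcal A$. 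So the proposal is correct, but only because you included the $\pi$-version as a fallback; that version should be the main argument, not a parenthetical.
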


\begin{proof}
Recall that $Sk(N) \prec \mathcal A = (H(\omega_2),\in,\pi,\mathcal Y)$. 
Since $M \in \mathcal X$, $Sk(M) = \pi[M]$. 
But $\pi[M]$ is definable in $\mathcal A$ from $M$ 
as the unique set $z$ such that for all $x \in M$, 
$\pi(x) \in z$, and for all $y \in z$, there is $x \in M$ such that 
$\pi(x) = y$. 
Hence $Sk(M) = \pi[M] \in Sk(N)$.
\end{proof}

\begin{lemma}
Let $M$ and $N$ be in $\mathcal X$, and suppose 
that $M \in Sk(N)$. 
Then every initial segment of $M$ is in $Sk(N)$.
\end{lemma}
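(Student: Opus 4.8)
The plan is to reduce an arbitrary initial segment of $M$ to one that is cut off at an element of $M$ itself, and then invoke elementarity of $Sk(N)$.

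The first and only substantial step is to show that $M \subseteq Sk(N)$. Since $M \in \mathcal Y \subseteq P_{\omega_1}(\omega_2)$, the set $M$ is countable, so there is a surjection $f : \omega \to M$; as $M \in Sk(N)$ and $Sk(N) \prec \mathcal A$, we may take $f \in Sk(N)$. Each natural number is definable in $\mathcal A$, so $\omega \subseteq Sk(N)$, whence $f(n) \in Sk(N)$ for every $n < \omega$, and therefore $M = \{ f(n) : n < \omega \} \subseteq Sk(N)$. (Alternatively, one can first apply Lemma 1.8 to get $Sk(M) \in Sk(N)$ and run the same countability argument on the structure $Sk(M)$, using $M = Sk(M) \cap \omega_2$.)

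Now let $M \cap \beta$ be an initial segment of $M$, where $\beta$ is an ordinal. If $\beta \geq \sup(M)$ then $M \cap \beta = M \in Sk(N)$ by hypothesis, so assume $\beta < \sup(M)$ and let $\gamma$ be the least element of $M$ with $\gamma \geq \beta$. No element of $M$ lies in the interval $[\beta,\gamma)$, so $M \cap \beta = M \cap \gamma$. By the previous paragraph $\gamma \in M \subseteq Sk(N)$, and $M \in Sk(N)$; since $M \cap \gamma$ is definable in $\mathcal A$ from the parameters $M$ and $\gamma$, elementarity of $Sk(N)$ yields $M \cap \gamma \in Sk(N)$, and hence $M \cap \beta \in Sk(N)$, as desired. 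I expect no real obstacle here: once $M \subseteq Sk(N)$ is in hand, the remainder is just the elementary observation that a proper initial segment of a set of ordinals is always the intersection of that set with one of its own elements, together with a one-line appeal to elementarity.
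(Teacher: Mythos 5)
Your proof is correct and follows essentially the same route as the paper: first establish $M \subseteq Sk(N)$ from the countability of $M$ (the paper asserts this in one line; you spell out the standard surjection-from-$\omega$ argument), then observe that any proper initial segment equals $M \cap \gamma$ for $\gamma = \min(M \setminus K)$, which lies in $Sk(N)$ by elementarity since $M$ and $\gamma$ do.
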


\begin{proof}
Since $M \in Sk(N)$ and $M$ is countable, 
$M \subseteq Sk(N)$. 
Let $K$ be a proper initial segment of $M$. 
Let $\gamma = \min(M \setminus K)$. 
Then $K = M \cap \gamma$. 
Since $M$ and $\gamma$ are in $Sk(N)$, it follows that 
$M \cap \gamma = K$ is in $Sk(N)$.
\end{proof}

Next we relate elements of $\mathcal X$ with ordinals in $\Lambda$.
Note that by Lemma 1.4, if $M \in \mathcal X$ and $\beta \in C$, then 
$Sk(M) \cap Sk(\beta) = Sk(M \cap \beta)$. 
The next lemma says that if we cut off a set in $\mathcal X$ at an ordinal 
in $\Lambda$, then the resulting set is in $\mathcal X$.

\begin{lemma}
If $M \in \mathcal X$ and $\beta \in C$, then 
$M \cap \beta \in \mathcal X$. 
In particular, if $M \in \mathcal X$ and $\beta \in \Lambda$, then 
$M \cap \beta \in \mathcal X$.
\end{lemma}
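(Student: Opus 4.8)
The plan is to verify the three defining properties of $\mathcal{X}$ for the set $M \cap \beta$, given that $M \in \mathcal{X}$ and $\beta \in C$. Recall that membership in $\mathcal{X}$ requires: (i) $M \cap \beta \in \mathcal{Y}$; (ii) $Sk(M \cap \beta) \cap \omega_2 = M \cap \beta$; and (iii) for all $\gamma \in M \cap \beta$, $\sup(C \cap \gamma) \in M \cap \beta$. The final assertion about $\beta \in \Lambda$ is then immediate, since $\Lambda \subseteq C$ (an ordinal in $\Lambda$ is a limit point of $C$ lying in $\cof(\omega_1)$, and since the first $\omega_1$ many elements of $C$ above any point are cofinal, such a limit point actually lies in $C$; in any case $\Lambda \subseteq C$ suffices).

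For property (i), I would use Assumption 2 together with our standing convention that $\mathcal{Y}$ is closed under initial segments: since $M \in \mathcal{Y}$ and $M \cap \beta$ is an initial segment of $M$ (as $\beta$ is an ordinal), we get $M \cap \beta \in \mathcal{Y}$. For property (ii), I would invoke Lemma 1.4: since $M \in \mathcal{X}$ and $\beta \in C$, we have $Sk(M) \cap Sk(\beta) = Sk(M \cap \beta)$. Intersecting with $\omega_2$ and using that $Sk(M) \cap \omega_2 = M$ (as $M \in \mathcal{X}$) and $Sk(\beta) \cap \omega_2 = \beta$ (as $\beta \in C$), we obtain $Sk(M \cap \beta) \cap \omega_2 = (Sk(M) \cap \omega_2) \cap (Sk(\beta) \cap \omega_2) = M \cap \beta$, as desired.

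Property (iii) is the one requiring the most care, and I expect it to be the main obstacle. Fix $\gamma \in M \cap \beta$. Since $M \in \mathcal{X}$, we know $\sup(C \cap \gamma) \in M$. It remains to check that $\sup(C \cap \gamma) < \beta$, so that this supremum actually lies in $M \cap \beta$. Since $\gamma < \beta$, we have $C \cap \gamma \subseteq C \cap \beta \subseteq \beta$, so $\sup(C \cap \gamma) \le \gamma < \beta$. (One should be slightly careful with the edge case where $C \cap \gamma$ is empty, in which case the supremum is $0 \in M \cap \beta$ provided $M \cap \beta$ is nonempty; but if $\gamma \in M \cap \beta$ then $0 \le \gamma$ and typically $0 \in M$, so this is unproblematic, or one simply notes $\sup(\emptyset) = 0 < \beta$.) Hence $\sup(C \cap \gamma) \in M \cap \beta$, completing the verification. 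This shows $M \cap \beta \in \mathcal{X}$.
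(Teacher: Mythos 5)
Your proof is correct and follows essentially the same three-step verification as the paper: $\mathcal Y$ is closed under initial segments (property (i)), Lemma 1.4 plus intersecting with $\omega_2$ (property (ii)), and the observation that $\sup(C\cap\gamma)\le\gamma<\beta$ (property (iii)). The only cosmetic difference is your extra remark justifying $\Lambda\subseteq C$ --- which is immediate since $C$ is closed --- and the harmless aside about $\sup(\emptyset)$.
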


\begin{proof}
The set $M \cap \beta$ is in $\mathcal Y$ since 
$\mathcal Y$ is closed under initial segments. 
Also $Sk(M \cap \beta) = Sk(M) \cap Sk(\beta)$. 
So $Sk(M \cap \beta) \cap \omega_2 = 
(Sk(M) \cap Sk(\beta)) \cap \omega_2 = 
(Sk(M) \cap \omega_2) \cap (Sk(\beta) \cap \omega_2) = 
M \cap \beta$. 

Now let $\gamma \in M \cap \beta$. 
Then $\sup(C \cap \gamma) \in M$ since $M \in \mathcal X$. 
But $\gamma < \beta$ implies $\sup(C \cap \gamma) \le \gamma < \beta$. 
So $\sup(C \cap \gamma) \in M \cap \beta$.
\end{proof}

The next result describes how we will use the assumption of the thinness 
of $\mathcal Y$.

\begin{proposition}
If $\beta \in \omega_2 \cap \cof(\omega_1)$, 
then $\mathcal Y \cap P(\beta) \subseteq Sk(\beta)$. 
In particular, if $M \in \mathcal X$ and $\beta \in \Lambda$, 
then $M \cap \beta \in Sk(\beta)$.
\end{proposition}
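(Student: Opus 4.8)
The plan is to prove the first sentence, since the second sentence then follows immediately: if $M \in \mathcal{X}$ and $\beta \in \Lambda$, then $\beta \in \cof(\omega_1)$ and $M \cap \beta \in \mathcal{Y} \cap P(\beta)$ (as $\mathcal{Y}$ is closed under initial segments), so $M \cap \beta \in Sk(\beta)$. For the main claim, fix $\beta \in \omega_2 \cap \cof(\omega_1)$ and let $a \in \mathcal{Y} \cap P(\beta)$; I want to show $a \in Sk(\beta)$. The idea is to enumerate the (at most $\omega_1$-many) candidates and use the regularity of $\beta$ together with elementarity to locate $a$ inside $Sk(\beta)$.

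First I would invoke Assumption 2 (thinness): the set $\mathcal{Y} \cap P(\beta) = \{ a \cap \beta : a \in \mathcal{Y} \}$ has size at most $\omega_1$. Since $\pi : \omega_2 \to H(\omega_2)$ is a bijection, the structure $\mathcal{A} = (H(\omega_2), \in, \pi, \mathcal{Y})$ can compute, for each $\gamma < \omega_2$, the set $\mathcal{Y} \cap P(\gamma)$, and using the definable well-ordering induced by $\pi$ it can choose a surjection $e_\gamma : \omega_1 \to \mathcal{Y} \cap P(\gamma)$ definably and uniformly in $\gamma$. In particular there is a sequence $\langle e_\gamma : \gamma < \omega_2 \rangle$ which is definable in $\mathcal{A}$ without parameters. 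Now $a \in \mathcal{Y} \cap P(\beta)$, so $a = e_\beta(\xi)$ for some $\xi < \omega_1$. The point is that $\beta$ need not belong to $Sk(\beta)$, so I cannot directly apply $e_\beta$ inside $Sk(\beta)$; instead I will approximate $\beta$ from below.

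The key step: since $\beta$ has cofinality $\omega_1$ and $a \subseteq \beta$ is countable, $\sup(a) < \beta$, so fix $\delta$ with $\sup(a) < \delta < \beta$ and $\delta \in C$ (possible since $C$ is a club in $\omega_2$ and $\beta$ will be a limit point of $C$ when $\beta \in \Lambda$; for general $\beta \in \cof(\omega_1)$ one still has $\sup(C \cap \beta) = \beta$, so such $\delta$ exists). Then $a \subseteq \delta$, and since $\mathcal{Y}$ is closed under initial segments, $a = a \cap \delta \in \mathcal{Y} \cap P(\delta)$, hence $a = e_\delta(\eta)$ for some $\eta < \omega_1$. Now $\delta \in C$ means $Sk(\delta) \cap \omega_2 = \delta$, so $\delta \subseteq Sk(\delta)$; in particular $\eta < \omega_1 \le \delta$ gives $\eta \in Sk(\delta)$, and $\delta \in C \subseteq Sk(\beta)$ (as $\delta < \beta$ and... here one needs $\delta \in Sk(\beta)$, which holds because $Sk(\beta) \cap \omega_2 = \beta$ when $\beta \in C$). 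Since $e$ is definable in $\mathcal{A}$ without parameters, $e_\delta \in Sk(\beta)$, and $Sk(\beta) \prec \mathcal{A}$ computes $e_\delta(\eta) = a$, so $a \in Sk(\beta)$.

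The main obstacle is the subtlety that $\beta \notin Sk(\beta)$ in general, so one cannot name $a$ via an enumeration of $\mathcal{Y} \cap P(\beta)$ from within $Sk(\beta)$; the fix is to push $a$ down to a proper initial segment $\delta < \beta$ lying in $C$, exploiting $\cof(\beta) = \omega_1$ and countability of $a$ to guarantee $\sup(a) < \beta$, and then exploiting $\delta \in C$ to get $\delta \subseteq Sk(\delta)$ so that the relevant index $\eta < \omega_1$ is automatically an element of $Sk(\delta) \subseteq Sk(\beta)$. I would double-check the precise requirement on $\beta$ needed to find $\delta \in C \cap \beta$ with $\delta$ above $\sup(a)$: for the stated generality ($\beta \in \cof(\omega_1)$, not necessarily in $\Lambda$) one uses that $C$ is club so $C \cap \beta$ is cofinal in $\beta$, and cofinality $\omega_1 > \omega$ ensures some such $\delta$ clears the countable ordinal $\sup(a)$; one also wants $\beta \in C$ itself (or at least $\delta \in Sk(\beta)$), which for $\beta$ a limit point of $C$ is automatic, and in the general case follows since $Sk(\beta)$ is cofinal enough — this is the point to verify carefully.
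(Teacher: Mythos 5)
Your core argument is the same as the paper's: use $\cof(\beta)=\omega_1$ together with the countability of $a$ to push $a$ down below some $\delta<\beta$, invoke thinness to get a surjection $\omega_1 \to \mathcal Y \cap P(\delta)$ definable from $\delta$, and then conclude $a \in Sk(\beta)$ since both $\delta$ and all of $\omega_1$ lie in $Sk(\beta)$. (The paper fixes $\gamma<\beta$, notes $\mathcal Y \cap P(\gamma)$ is definable from $\gamma$ and of size $\le\omega_1$, and pulls a surjection $g:\omega_1 \to \mathcal Y \cap P(\gamma)$ into $Sk(\beta)$ by elementarity; your uniformly definable sequence $\langle e_\gamma\rangle$ is the same idea.)

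However, your detour through $C$ introduces a false claim that you should excise. You assert that for general $\beta \in \omega_2 \cap \cof(\omega_1)$ one has $\sup(C \cap \beta)=\beta$; this is not true. $C$ is a club in $\omega_2$, but an arbitrary $\beta$ of cofinality $\omega_1$ need not be a limit point of $C$, so $C\cap\beta$ need not be cofinal in $\beta$. You also say that $\delta\in Sk(\beta)$ ``holds because $Sk(\beta)\cap\omega_2 = \beta$ when $\beta\in C$'' --- this is the wrong direction: $\beta \in C$ bounds $Sk(\beta)\cap\omega_2$ from above, whereas what you need is the lower bound $\beta \subseteq Sk(\beta)$, which is true for every $\beta$ simply because $Sk(\beta)$ is the hull of the set of ordinals below $\beta$.

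The good news is that you never actually need $\delta\in C$, $\beta\in C$, or any claim about $C$ at all. Any $\delta$ with $\sup(a)<\delta<\beta$ works: then $a \in \mathcal Y \cap P(\delta)$ by closure under initial segments; $\delta \in Sk(\beta)$ because $\delta<\beta\subseteq Sk(\beta)$; $\eta\in Sk(\beta)$ because $\eta<\omega_1\le\beta\subseteq Sk(\beta)$; and $e_\delta\in Sk(\beta)$ by definability from $\delta$. Strip out all mention of $C$, $Sk(\delta)$, and ``$\beta\in C$,'' and your proof matches the paper's.
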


\begin{proof}
Since $\beta$ has cofinality $\omega_1$, 
it suffices to show that for all $\gamma < \beta$, 
$\mathcal Y \cap P(\gamma) \subseteq Sk(\beta)$. 
So fix $\gamma < \beta$. 
Then $\mathcal Y \cap P(\gamma) = \{ a \cap \gamma : a \in \mathcal Y \}$ 
has size at most 
$\omega_1$ by the thinness of $\mathcal Y$. 
In particular, $\mathcal Y \cap P(\gamma)$ is in $H(\omega_2)$. 
Note that $\mathcal Y \cap P(\gamma)$ 
is definable in $\mathcal A$ from $\gamma$. 
Hence $\mathcal Y \cap P(\gamma) \in Sk(\beta)$.

Again by elementarity, there is a surjection 
$g : \omega_1 \to \mathcal Y \cap P(\gamma)$ in $Sk(\beta)$. 
Since $\omega_1 \subseteq Sk(\beta)$, it follows that 
$\mathcal Y \cap P(\gamma) = g[\omega_1] \subseteq Sk(\beta)$. 
This completes the proof that 
$\mathcal Y \cap P(\beta) \subseteq Sk(\beta)$.

Now if $M \in \mathcal X$ and $\beta \in \Lambda$, then by Lemma 1.10, 
$M \cap \beta$ is in $\mathcal X \cap P(\beta)$. 
But $\mathcal X \cap P(\beta) \subseteq \mathcal Y \cap P(\beta) \subseteq 
Sk(\beta)$, so $M \cap \beta \in Sk(\beta)$.
\end{proof}

\section{Comparison Points and Remainders}

We introduce the idea of the comparison point $\beta_{M,N}$ of models 
$M, N \in \mathcal X$. 
One of the main consequences of the definition is that $M$ and $N$ will not share 
any common elements or limit points past their comparison point. 
When we use countable models as side conditions in our forcing posets, 
we will require that any two models appearing in a condition are 
membership related below their comparison point.

The definition of $\beta_{M,N}$ is made relative to a particular stationary 
subset of $\Lambda$.\footnote{For the applications in the current paper, the 
special case $\Gamma = \Lambda$ will suffice. 
In order to increase the flexibility of the method to future applications, we consider 
the more general case of a stationary subset $\Gamma$ of $\Lambda$.}

\begin{notation}
Fix for the remainder of the paper a stationary set 
$\Gamma \subseteq \Lambda$.
\end{notation}

\begin{definition}
For a set $M \in \mathcal X$, define $\Gamma_M$ as the set 
of $\beta \in \Gamma$ 
such that 
$$
\beta = \min( \Gamma \setminus (\sup(M \cap \beta)) ).
$$
\end{definition}

In other words, $\beta \in \Gamma_M$ if $\beta \in \Gamma$ and 
$$
\Gamma \cap [\sup(M \cap \beta),\beta) = \emptyset.
$$
If $\beta \in \Gamma_M$, then $\beta$ is the least element of $\Gamma$ 
strictly larger than $\sup(M \cap \beta)$. 

The set $\Gamma_M$ is countable. 
The first element of $\Gamma$ is in $\Gamma_M$. 
To produce other elements of $\Gamma_M$, if you 
take any ordinal $\gamma \le \omega_2$ 
and let $\beta := \min(\Gamma \setminus (\sup(M \cap \gamma)))$, 
then $\beta \in \Gamma_M$.

\begin{lemma}
If $M \subseteq N$ are in $\mathcal X$, then 
$\Gamma_M \subseteq \Gamma_N$.
\end{lemma}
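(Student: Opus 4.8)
The plan is to unpack the definition of $\Gamma_M$ and $\Gamma_N$ directly and exploit the monotonicity $M \subseteq N$. Fix $\beta \in \Gamma_M$; we must show $\beta \in \Gamma_N$. By the reformulation of the definition, it suffices to verify that $\Gamma \cap [\sup(N \cap \beta), \beta) = \emptyset$, i.e.\ that $\beta = \min(\Gamma \setminus \sup(N \cap \beta))$.

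The key observation is a comparison of suprema: since $M \subseteq N$, we have $M \cap \beta \subseteq N \cap \beta$, and hence $\sup(M \cap \beta) \le \sup(N \cap \beta)$. On the other hand, every element of $N \cap \beta$ is an ordinal below $\beta$, so $\sup(N \cap \beta) \le \beta$. We therefore get the chain $\sup(M \cap \beta) \le \sup(N \cap \beta) \le \beta$, which sandwiches $\sup(N \cap \beta)$ into the interval $[\sup(M \cap \beta), \beta]$. The crucial point is that $\beta \in \Gamma_M$ tells us $\Gamma$ misses the half-open interval $[\sup(M \cap \beta), \beta)$ entirely, so in particular $\Gamma \cap [\sup(N \cap \beta), \beta) \subseteq \Gamma \cap [\sup(M \cap \beta), \beta) = \emptyset$. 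Since also $\beta \in \Gamma$ (as $\beta \in \Gamma_M \subseteq \Gamma$), it follows that $\beta$ is the least element of $\Gamma$ that is $\ge \sup(N \cap \beta)$, which is exactly the assertion $\beta \in \Gamma_N$.

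There is essentially no obstacle here; the only thing to be slightly careful about is the boundary behaviour at $\sup(N\cap\beta)$ itself — if $\sup(N \cap \beta)$ happens to equal $\beta$, the interval $[\sup(N\cap\beta),\beta)$ is empty and the argument is trivial, and if $\sup(N \cap \beta) < \beta$ then it lies in $[\sup(M\cap\beta),\beta)$ and the inclusion of intervals does the work. Either way the conclusion $\Gamma \cap [\sup(N \cap \beta), \beta) = \emptyset$ holds, and together with $\beta \in \Gamma$ this gives $\beta \in \Gamma_N$. I expect the entire proof to be two or three short sentences once the interval reformulation of the definition is in hand.
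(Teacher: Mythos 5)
Your proof is correct and follows essentially the same argument as the paper's: you observe $\sup(M \cap \beta) \le \sup(N \cap \beta) \le \beta$ from $M \subseteq N$, and conclude that the interval $[\sup(N\cap\beta),\beta)$ is contained in the interval $[\sup(M\cap\beta),\beta)$, which $\Gamma$ avoids. The paper's proof is a compressed version of exactly this reasoning.
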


\begin{proof}
Let $\gamma \in \Gamma_M$. 
Then by definition, 
$\gamma = \min(\Gamma \setminus (\sup(M \cap \gamma)))$. 
Since $M \subseteq N$, $\sup(M \cap \gamma) \le \sup(N \cap \gamma) < \gamma$. 
Hence $\gamma = \min(\Gamma \setminus (\sup(N \cap \gamma)))$.
\end{proof}

Note that if $\beta < \gamma$ are in $\Gamma_M$, then 
$M \cap [\beta,\gamma) \ne \emptyset$. 
For $M \cap \gamma$ cannot be a subset of $\beta$, since otherwise 
$\Gamma \cap [\sup(M \cap \gamma),\gamma)$ contains $\beta$ 
and so is nonempty.

\begin{lemma}
Let $M$ and $N$ be in $\mathcal X$. 
Then $\Gamma_M \cap \Gamma_N$ has a largest element.
\end{lemma}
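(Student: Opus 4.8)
The plan is to show that $\Gamma_M \cap \Gamma_N$ is nonempty and has a largest element by exhibiting a canonical candidate for the maximum. First I would observe that $\Gamma_M \cap \Gamma_N \ne \emptyset$, since the least element of $\Gamma$ lies in both $\Gamma_M$ and $\Gamma_N$ by the remarks following Definition 2.2. So the real content is the existence of a largest element. Since $\Gamma_M$ and $\Gamma_N$ are both countable, their intersection is countable, so it suffices to rule out the possibility that $\Gamma_M \cap \Gamma_N$ has order type a limit ordinal, i.e.\ that there is a strictly increasing sequence $\beta_0 < \beta_1 < \cdots$ of elements of $\Gamma_M \cap \Gamma_N$ with supremum $\delta \notin \Gamma_M \cap \Gamma_N$.

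The key step will be to show that such a supremum $\delta$ must in fact belong to $\Gamma_M \cap \Gamma_N$ after all, by analyzing $\sup(M \cap \delta)$ and $\sup(N \cap \delta)$. Suppose $\beta_0 < \beta_1 < \cdots$ is an increasing sequence in $\Gamma_M \cap \Gamma_N$ with $\delta = \sup_n \beta_n$. By the remark preceding this lemma, for each $n$ the set $M \cap [\beta_n, \beta_{n+1})$ is nonempty; hence $\sup(M \cap \delta) = \delta$, and likewise $\sup(N \cap \delta) = \delta$. Now $\delta$ is a supremum of elements of $\Gamma \subseteq \Lambda \subseteq C'$ (limit points of $C$), so $\delta$ is a limit point of $\Gamma$; in particular $\delta \in \Gamma \setminus \sup(M \cap \delta)$ would fail only if $\delta = \sup(M \cap \delta)$, which is exactly what we have. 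I need to check that $\delta \in \Gamma$: since $\Gamma \subseteq \Lambda$ consists of ordinals of cofinality $\omega_1$ which are limit points of $C$, and $\delta$ has cofinality $\omega$ here, this is the obstruction — $\delta$ need not lie in $\Gamma$. So instead of concluding $\delta \in \Gamma_M \cap \Gamma_N$, I would set $\gamma := \min(\Gamma \setminus \delta)$ and argue $\gamma \in \Gamma_M \cap \Gamma_N$: since $\sup(M \cap \gamma) \ge \sup(M \cap \delta) = \delta$ and also $\Gamma \cap [\delta,\gamma) = \emptyset$ by choice of $\gamma$, we get $\Gamma \cap [\sup(M \cap \gamma), \gamma) = \emptyset$, so $\gamma \in \Gamma_M$, and symmetrically $\gamma \in \Gamma_N$. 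But then $\gamma > \beta_n$ for all $n$ and $\gamma \in \Gamma_M \cap \Gamma_N$, contradicting that the $\beta_n$ were cofinal in the intersection.

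The main obstacle, then, is handling the case distinction on whether the supremum $\delta$ of a cofinal $\omega$-sequence from the intersection is itself in $\Gamma$: when it is, $\delta$ is the largest element; when it is not, one must pass up to $\min(\Gamma \setminus \delta)$ and verify it lands in $\Gamma_M \cap \Gamma_N$. In either case one contradicts cofinality of the chosen sequence, so $\Gamma_M \cap \Gamma_N$ cannot have limit order type and therefore has a largest element. A clean way to package this is: let $\delta = \sup(\Gamma_M \cap \Gamma_N)$ and $\gamma = \min(\Gamma \setminus \delta)$; show $\gamma \in \Gamma_M \cap \Gamma_N$ using that $\sup(M \cap \gamma) \ge \delta$ and $\sup(N \cap \gamma) \ge \delta$ (which follows from the nonempty-intervals remark applied along a cofinal sequence, or is immediate if the sup is attained), together with $\Gamma \cap [\delta, \gamma) = \emptyset$; conclude $\gamma$ is the largest element. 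I expect no genuine difficulty beyond keeping the suprema and the minimality of $\gamma$ straight.
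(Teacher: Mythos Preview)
Your proposal is correct and follows essentially the same route as the paper: assume for contradiction that the intersection has no largest element, observe that its supremum $\delta$ has countable cofinality and is a limit point of both $M$ and $N$ (via the nonempty-intervals remark), then pass to $\gamma = \min(\Gamma \setminus \delta)$ and verify $\gamma \in \Gamma_M \cap \Gamma_N$, contradicting the choice of $\delta$. The only cosmetic difference is that the paper dispenses with the case distinction on whether $\delta \in \Gamma$ by noting immediately that $\cf(\delta) = \omega$ forces $\delta \notin \Gamma$, and your final ``clean packaging'' should really be framed as a contradiction argument (since if $\gamma > \delta$ you contradict $\delta = \sup$, and if $\gamma = \delta$ you need $\delta \in \Gamma$, which fails when the sup is not attained).
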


\begin{proof}
The set 
$\Gamma_M \cap \Gamma_N$ is nonempty because it contains the least 
element of $\Gamma$. 
Suppose for a contradiction that $\Gamma_M \cap \Gamma_N$ has no largest element, 
and let $\gamma = \sup(\Gamma_M \cap \Gamma_N)$. 
Then $\gamma$ is a limit point of the countable set 
$\Gamma_M \cap \Gamma_N$, and therefore $\gamma$ has 
cofinality $\omega$.

Observe that if $\beta_0 < \beta_1$ are in $\Gamma_M \cap \Gamma_N$, 
then as noted before the lemma, both $M \cap [\beta_0,\beta_1)$ and 
$N \cap [\beta_0,\beta_1)$ are nonempty. 
Thus $\gamma$ is a limit point of both $M$ and $N$. 
Let $\beta$ be the minimal element of $\Gamma$ greater than or 
equal to $\gamma$. 
Since $\gamma$ has cofinality $\omega$, $\gamma < \beta$. 
Now as $\gamma$ is a limit point of both $M$ and $N$, it follows that 
$$
\gamma \le \sup(M \cap \beta), \ \gamma \le \sup(N \cap \beta),
$$
and by the choice of $\beta$, 
$\Gamma \cap [\gamma,\beta)$ is empty. 
Therefore 
$$
\Gamma \cap [\sup(M \cap \beta),\beta) = \emptyset, \ 
\Gamma \cap [\sup(N \cap \beta),\beta) = \emptyset,
$$
which implies that $\beta \in \Gamma_M \cap \Gamma_N$. 
But this contradicts 
that $\beta > \gamma$ and 
$\gamma = \sup(\Gamma_M \cap \Gamma_N)$.
\end{proof}

We now introduce the comparison point $\beta_{M,N}$ 
of models $M, N \in \mathcal X$.

\begin{notation}
For $M$ and $N$ in $\mathcal X$, let 
$\beta_{M,N}$ denote the largest ordinal in $\Gamma_M \cap \Gamma_N$.
\end{notation}

One of the most important properties 
of the comparison point of two models is that the models 
have no common elements or limit points above it.

\begin{proposition}
Let $M$ and $N$ be in $\mathcal X$. 
Let $M' := M \cup \lim(M)$ and $N' := N \cup \lim(N)$. 
Then $M' \cap N' \subseteq \beta_{M,N}$.
\end{proposition}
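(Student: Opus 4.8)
The plan is to argue by contradiction. Suppose toward a contradiction that some $\xi \in M' \cap N'$ satisfies $\xi \ge \beta_{M,N}$; I will produce an ordinal in $\Gamma_M \cap \Gamma_N$ strictly larger than $\beta_{M,N}$, contradicting the fact that $\beta_{M,N}$ is by definition (Notation 2.5, using Lemma 2.4) the \emph{largest} element of $\Gamma_M \cap \Gamma_N$.

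The heart of the argument is the case $\xi \notin \Gamma$. In that case I would set $\delta := \min(\Gamma \setminus (\xi+1))$, which exists because $\Gamma$ is stationary, hence unbounded, in $\omega_2$. Then $\delta > \xi$, and by minimality of $\delta$ together with $\xi \notin \Gamma$ we have $\Gamma \cap [\xi,\delta) = \emptyset$. The key observation is that $\xi \in M'$ forces $\sup(M \cap (\xi+1)) = \xi$: this is immediate if $\xi \in M$, and if $\xi$ is a limit point of $M$ then $M \cap \xi$ is already cofinal in $\xi$. Since $\xi+1 \le \delta$, it follows that $\sup(M \cap \delta) \ge \xi$, hence $[\sup(M \cap \delta),\delta) \subseteq [\xi,\delta)$ and therefore $\Gamma \cap [\sup(M \cap \delta),\delta) = \emptyset$. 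By the reformulation of the definition of $\Gamma_M$ stated just after Definition 2.2, this says precisely that $\delta \in \Gamma_M$; the symmetric argument with $N$ gives $\delta \in \Gamma_N$. Thus $\delta \in \Gamma_M \cap \Gamma_N$ with $\delta > \xi \ge \beta_{M,N}$, which is the contradiction we sought.

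It remains to reduce the case $\xi \in \Gamma$ to the one just handled. Since $\Gamma \subseteq \Lambda \subseteq \omega_2 \cap \cof(\omega_1)$ while $M$ and $N$ are countable, $\xi$ cannot be a limit point of $M$ or of $N$; as $\xi \in M' \cap N'$, this forces $\xi \in M \cap N$. Now $\xi \in M = Sk(M) \cap \omega_2$ and $Sk(M) \prec \mathcal A$, so the successor ordinal $\xi+1$, being definable in $\mathcal A$ from $\xi$, lies in $Sk(M) \cap \omega_2 = M$, and likewise $\xi+1 \in N$. Since every element of $\Gamma$ is a limit ordinal, $\xi+1 \notin \Gamma$. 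Hence $\xi+1 \in M' \cap N'$, $\xi+1 > \xi \ge \beta_{M,N}$, and $\xi+1 \notin \Gamma$, so applying the previous paragraph with $\xi+1$ in place of $\xi$ completes the proof.

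I expect the only genuine subtlety to be this last reduction. One cannot show directly that an element of $\Gamma$ lying in $M' \cap N'$ belongs to $\Gamma_M \cap \Gamma_N$: such an ordinal $\xi$ need not satisfy $\Gamma \cap [\sup(M \cap \xi),\xi) = \emptyset$, since $M$ may skip over elements of $\Gamma$ below $\xi$. The point is instead to notice that such a $\xi$ must be an actual member of both $M$ and $N$, and then to use elementarity of $Sk(M)$ and $Sk(N)$ to step just past $\xi$ while leaving $\Gamma$. Everything else is a routine unwinding of the definition of $\Gamma_M$.
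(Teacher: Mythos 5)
Your proof is correct and follows essentially the same route as the paper: take the least element of $\Gamma$ strictly above the ordinal in $M' \cap N'$ and verify that it lies in $\Gamma_M \cap \Gamma_N$, which bounds $\beta_{M,N}$ strictly above that ordinal. The one cosmetic difference is in handling the boundary at $\sup(M\cap\beta)$: the paper observes in a single stroke that $\sup(M\cap\beta)$ and $\sup(N\cap\beta)$ have countable cofinality and so cannot lie in $\Gamma$, whereas you split into the cases $\xi\notin\Gamma$ (where $\Gamma\cap[\xi,\delta)=\emptyset$ is immediate) and $\xi\in\Gamma$ (reduced to the first case by stepping to $\xi+1$, using that $\xi$ must then lie in $M\cap N$, that $Sk(M)$ and $Sk(N)$ are closed under ordinal successor, and that $\Gamma$ contains only limit ordinals) --- both uses of successor-closure are the same underlying fact, packaged differently.
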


\begin{proof}
Suppose that $\gamma$ is in $M' \cap N'$. 
We will show that 
$\gamma < \beta_{M,N}$. 
Let $\beta$ be the least element of $\Gamma$ which is strictly greater 
than $\gamma$. 
Since $\gamma \in M'$ and $\gamma < \beta$, we have that 
$$
\gamma =  \sup(M \cap (\gamma+1)) \le \sup(M \cap \beta).
$$
Similarly, 
$$
\gamma =  \sup(N \cap (\gamma+1)) \le \sup(N \cap \beta).
$$
By the choice of $\beta$, 
$\Gamma \cap (\gamma,\beta) = \emptyset$, 
and $\sup(M \cap \beta)$ and $\sup(N \cap \beta)$ are of countable 
cofinality and hence are not in $\Gamma$. 
Therefore 
$$
\Gamma \cap [\sup(M \cap \beta),\beta) = \emptyset
$$
and 
$$
\Gamma \cap [\sup(N \cap \beta),\beta) = \emptyset.
$$
So $\beta \in \Gamma_M \cap \Gamma_N$, which implies that 
$\beta \le \beta_{M,N}$ by the maximality of $\beta_{M,N}$. 
Since $\gamma < \beta$, this proves that $\gamma < \beta_{M,N}$.
\end{proof}

The forcing posets we define later in the paper 
will contain countable models as side conditions 
which are membership 
related below their comparison point. 
Sets of models which satisfy this property will be said to be adequate.

\begin{definition}
Let $A$ be a subset of $\mathcal X$. 
We say that $A$ is \emph{adequate} if for all $M, N \in A$, 
either $M \cap \beta_{M,N} = N \cap \beta_{M,N}$, 
$M \cap \beta_{M,N} \in Sk(N)$, or $N \cap \beta_{M,N} \in Sk(M)$.
\end{definition}

Note that if $M \cap \beta_{M,N} \in Sk(N)$, then 
$M \cap \beta_{M,N} \subseteq N$ and 
$\sup(M \cap \beta_{M,N}) \in N$. 
Also by Lemma 1.8, $Sk(M \cap \beta_{M,N}) \in Sk(N)$, and by Lemma 1.9, 
every initial segment of $M \cap \beta_{M,N}$ is in $Sk(N)$.

Suppose that $\{ M, N \}$ is adequate. 
Let us show that the way in which $M$ and $N$ compare is determined by their 
intersections with $\omega_1$. 
We claim that 
$$
M \cap \beta_{M,N} \in Sk(N) \ \textrm{iff} \ M \cap \omega_1 < N \cap \omega_1.
$$
Recall that $\omega_1 \le \beta_{M,N}$ and $\omega_1 \in N$. 
In the forward direction, suppose that $M \cap \beta_{M,N} \in Sk(N)$. 
Since $\omega_1 \le \beta_{M,N}$, we have that 
$M \cap \omega_1 = (M \cap \beta_{M,N}) \cap \omega_1$. 
As $M \cap \beta_{M,N} \in Sk(N)$, by elementarity 
$$
M \cap \omega_1 = (M \cap \beta_{M,N}) \cap \omega_1 \in Sk(N) \cap \omega_1 = 
N \cap \omega_1.
$$
Conversely, assume that $M \cap \omega_1 < N \cap \omega_1$. 
By the forward direction just proven, if $N \cap \beta_{M,N} \in Sk(M)$, 
then $N \cap \omega_1 < M \cap \omega_1$, which contradicts that 
$M \cap \omega_1 < N \cap \omega_1$. 
On the other hand, if $M \cap \beta_{M,N} = N \cap \beta_{M,N}$, then 
$$
M \cap \omega_1 = (M \cap \beta_{M,N}) \cap \omega_1 = 
(N \cap \beta_{M,N}) \cap \omega_1 = N \cap \omega_1,
$$
which again contradicts that $M \cap \omega_1 < N \cap \omega_1$. 
Hence the only possible way in which $M$ and $N$ could compare is that 
$M \cap \beta_{M,N} \in Sk(N)$, which proves the claim. 

It easily follows from this claim that 
$$
M \cap \beta_{M,N} = N \cap \beta_{M,N} \ \textrm{iff} \ 
M \cap \omega_1 = N \cap \omega_1.
$$
For the failure of the first statement implies that $M \cap \omega_1$ and 
$N \cap \omega_1$ are not equal by the claim, and conversely if these ordinals are not equal then the claim implies that either $M \cap \beta_{M,N} \in Sk(M)$ or 
$N \cap \beta_{M,N} \in Sk(N)$, depending on which ordinal is larger.

\bigskip

If $A$ is an adequate set and $M \in A$, we say that $M$ is 
\emph{$\in$-minimal} in $A$ if for all $N \in A$, 
$M \cap \omega_1 \le N \cap \omega_1$. 
Note that there always exists an $\in$-minimal model in $A$, if $A$ is nonempty. 
Also by the previous two paragraphs, $M \in A$ is minimal iff for all 
$N$ in $A$, $M \cap \beta_{M,N}$ 
is either equal to $N \cap \beta_{M,N}$ or in $Sk(N)$.

\bigskip

Now we introduce the idea of the remainder set, which describes the 
disjoint overlap of models above their comparison point.

\begin{definition}
Let $\{ M, N \}$ be adequate. 
Define the \emph{remainder set of $N$ over $M$}, denoted by 
$R_M(N)$, as the set of $\beta$ satisfying either:
\begin{enumerate}
\item there is $\gamma \ge \beta_{M,N}$ in $M$ such that 
$\beta = \min(N \setminus \gamma)$, or
\item $N \cap \beta_{M,N}$ is either equal to $M \cap \beta_{M,N}$ or 
is in $Sk(M)$, and $\beta = \min(N \setminus \beta_{M,N})$.
\end{enumerate}
\end{definition}

Note that we do not explicitly require the ordinal 
$\min(N \setminus \beta_{M,N})$ to be in $R_M(N)$ in the case that 
$M \cap \beta_{M,N} \in Sk(N)$.

\begin{proposition}
Let $\{ M, N \}$ be adequate. 
Then $R_M(N)$ is finite.
\end{proposition}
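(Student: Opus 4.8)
The plan is to reduce to clause (1) of Definition 2.8 and then derive a contradiction from an infinite increasing sequence by producing a common limit point of $M$ and $N$ lying above $\beta_{M,N}$, which is forbidden by Proposition 2.7.

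First I would note that clause (2) can contribute at most the single ordinal $\min(N \setminus \beta_{M,N})$ to $R_M(N)$, so it suffices to prove that the set of $\beta$ witnessed by clause (1) is finite. Suppose toward a contradiction that this set is infinite. Being a set of ordinals, it contains a strictly increasing sequence $\beta_0 < \beta_1 < \beta_2 < \cdots$, and for each $n$ I fix a witness $\gamma_n \in M$ with $\gamma_n \ge \beta_{M,N}$ and $\beta_n = \min(N \setminus \gamma_n)$; in particular each $\beta_n$ lies in $N$ and $\gamma_n \le \beta_n$.

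Next I would establish the interleaving inequality $\gamma_n > \beta_{n-1}$ for $n \ge 1$. Indeed, if $\gamma_n \le \beta_{n-1}$, then since $\beta_{n-1} \in N$ we would have $\beta_{n-1} \in N \setminus \gamma_n$, hence $\beta_n = \min(N \setminus \gamma_n) \le \beta_{n-1}$, contradicting $\beta_{n-1} < \beta_n$. Combined with $\gamma_n \le \beta_n$, this gives the chain $\gamma_0 \le \beta_0 < \gamma_1 \le \beta_1 < \gamma_2 \le \cdots$, so the $\gamma_n$ are strictly increasing and $\sup_n \gamma_n = \sup_n \beta_n$; denote this common supremum by $\delta$. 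Since $M$ and $N$ are countable, $\delta < \omega_2$, and $\delta$ is a limit point both of $M$ (via the $\gamma_n$) and of $N$ (via the $\beta_n$), so $\delta \in M' \cap N'$ in the notation $M' = M \cup \lim(M)$, $N' = N \cup \lim(N)$ of Proposition 2.7. On the other hand $\delta \ge \gamma_0 \ge \beta_{M,N}$, which contradicts the conclusion $M' \cap N' \subseteq \beta_{M,N}$ of Proposition 2.7. Hence $R_M(N)$ is finite.

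The only slightly fussy part is the bookkeeping in extracting the increasing sequence and checking the interleaving inequalities; the actual mathematical content is entirely contained in Proposition 2.7, which already packages the key fact that $M$ and $N$ share no limit points past their comparison point, so I do not anticipate any real obstacle here.
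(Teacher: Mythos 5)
Your proof is correct and takes essentially the same approach as the paper's: both extract an increasing sequence in $R_M(N)$, interleave witnesses from $M$, and derive a contradiction from the resulting common limit point of $M$ and $N$ above $\beta_{M,N}$. One small slip in references: the result asserting $M' \cap N' \subseteq \beta_{M,N}$ is Proposition 2.6, not 2.7 (the latter is the definition of adequate).
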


\begin{proof}
Suppose not, and let $\langle \beta_{n} : n < \omega \rangle$ 
be a strictly increasing sequence of ordinals in $R_M(N)$. 
Let $\xi = \sup_n \beta_n$. 
Then $\xi$ is a limit point of $N$. 
By the definition of $R_M(N)$, 
for each $n$ we can fix $\gamma_n \in M \cap (\beta_n,\beta_{n+1})$. 
Then $\xi = \sup_n \gamma_n$. 
So $\xi$ is a common limit point of $M$ and $N$ which is above 
$\beta_{M,N}$, which contradicts Proposition 2.6.
\end{proof}

\begin{lemma}
Let $\{ M, N \}$ be adequate. 
Let $\beta \in R_M(N)$, and suppose that $\beta$ is not equal to 
$\min(N \setminus \beta_{M,N})$. 
Then there is $\gamma \in R_N(M)$ such that 
$\beta = \min(N \setminus \gamma)$.
\end{lemma}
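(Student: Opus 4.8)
The plan is to reduce everything to clause (1) of the definition of $R_M(N)$ and then to clause (1) of the definition of $R_N(M)$. Since $\beta \ne \min(N \setminus \beta_{M,N})$, clause (2) cannot witness $\beta \in R_M(N)$, so clause (1) must: fix $\gamma' \in M$ with $\gamma' \ge \beta_{M,N}$ and $\beta = \min(N \setminus \gamma')$. Recalling that $\beta_{N,M} = \beta_{M,N}$ (as $\Gamma_M \cap \Gamma_N$ is symmetric in $M,N$), it then suffices to produce an ordinal $\delta \in N$ with $\delta \ge \beta_{M,N}$ such that $\gamma := \min(M \setminus \delta)$ satisfies $\min(N \setminus \gamma) = \beta$; this $\delta$ will witness $\gamma \in R_N(M)$ via clause (1), and the equation $\min(N \setminus \gamma) = \beta$ is the remaining point.

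The key idea is to take $\delta := \max(N \cap \gamma')$, and the first task is to check this is legitimate. Set $\delta_0 := \min(N \setminus \beta_{M,N})$, which exists since $\emptyset \ne N \setminus \gamma' \subseteq N \setminus \beta_{M,N}$. From $\gamma' \ge \beta_{M,N}$ one gets $\delta_0 \le \beta$, hence $\delta_0 < \beta$ by hypothesis; a one-line argument rules out $\delta_0 \ge \gamma'$ (it would force $\beta = \delta_0$), so $\delta_0 \in N \cap \gamma'$ and $N \cap \gamma' \ne \emptyset$. Moreover $\gamma'$ cannot be a limit point of $N$: otherwise $\gamma' \in M' \cap N'$, so $\gamma' < \beta_{M,N}$ by Proposition 2.6, contradicting $\gamma' \ge \beta_{M,N}$. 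Hence $\delta = \max(N \cap \gamma')$ is well-defined, $\delta \in N$, and $\delta \ge \delta_0 \ge \beta_{M,N}$. Also $\delta \notin M$, since $\delta \in M \cap N$ with $\delta \ge \beta_{M,N}$ would again contradict Proposition 2.6. Therefore $\gamma := \min(M \setminus \delta)$ satisfies $\delta < \gamma \le \gamma'$, the upper bound because $\gamma' \in M$ and $\gamma' > \delta$.

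It remains to see $\min(N \setminus \gamma) = \beta$. Because $\delta = \max(N \cap \gamma')$, the interval $[\delta+1,\gamma')$ — and hence $[\gamma,\gamma')$, since $\delta+1 \le \gamma \le \gamma'$ — contains no element of $N$, so $N \setminus \gamma = N \setminus \gamma'$ and thus $\min(N \setminus \gamma) = \min(N \setminus \gamma') = \beta$. This finishes the argument, with $\gamma \in R_N(M)$ witnessed by clause (1) using $\delta$.

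I expect the main obstacle to be the choice of $\delta$: the naive attempts — using $\gamma'$ itself, or $\max(M \cap \gamma')$ — do not work, because $\gamma'$ may well be a limit point of $M$ so that $\max(M \cap \gamma')$ need not exist, and it is not clear that $\gamma'$ lies in $R_N(M)$. What saves the argument is precisely that $\gamma'$, sitting in $M$ above $\beta_{M,N}$, cannot be a limit point of $N$ by Proposition 2.6, so $\max(N \cap \gamma')$ is available; all the remaining steps are routine bookkeeping with minima and maxima of sets of ordinals.
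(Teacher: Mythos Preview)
Your overall strategy is sound, but there is a gap at the step where you set $\delta := \max(N \cap \gamma')$. Your justification is that $\gamma'$ is not a limit point of $N$; however, this only yields $\sup(N \cap \gamma') < \gamma'$, not that this supremum is attained in $N$. Nothing you have established rules out the possibility that $\sigma := \sup(N \cap \gamma')$ is a limit point of $N$ lying strictly below $\gamma'$ with $\sigma \notin N$ --- and Proposition 2.6 does not help here, since there is no reason $\sigma$ should lie in $M'$. In that case $\max(N \cap \gamma')$ simply does not exist, and the remainder of your argument does not go through as written.

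The paper closes this gap by appealing instead to the finiteness of $R_N(M)$ (Proposition 2.9). Having produced, from your $\delta_0$, an element $\xi := \min(M \setminus \delta_0) \in R_N(M)$ with $\delta_0 < \xi \le \gamma'$, one takes $\gamma$ to be the \emph{largest} element of $R_N(M)$ in the interval $(\delta_0, \gamma']$. Then $N \cap [\gamma, \gamma'] = \emptyset$: otherwise the least element of $M$ above some $\nu \in N \cap [\gamma, \gamma']$ would be a strictly larger element of $R_N(M)$ still $\le \gamma'$, contradicting maximality of $\gamma$. From this, $\beta = \min(N \setminus \gamma)$ follows exactly as in your last paragraph. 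Your argument can be repaired along the same lines; the key missing ingredient is Proposition 2.9.
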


\begin{proof}
Suppose that $\beta \in R_M(N)$ and 
$\beta$ is not equal to $\min(N \setminus \beta_{M,N})$. 
Then by the definition of $R_M(N)$, we can fix 
$\gamma^* \in M \setminus \beta_{M,N}$ such that 
$\beta = \min(N \setminus \gamma^*)$. 
Since $\beta$ is not equal to $\min(N \setminus \beta_{M,N})$, 
fix $\beta^* \in N \setminus \beta_{M,N}$ which is below $\beta$. 
Then 
$$
\beta_{M,N} \le \beta^* < \gamma^* < \beta.
$$

We claim that there exists some $\xi$ in $R_N(M)$ with 
$\beta^* < \xi \le \gamma^*$. 
Namely, let $\xi := \min(M \setminus \beta^*)$. 
Now let $\gamma$ be the largest such $\xi$, which 
is possible since $R_N(M)$ is finite. 
So 
$$
\beta_{M,N} \le \beta^* < \gamma \le \gamma^* < \beta.
$$
Clearly there is no ordinal in $N$ between $\gamma$ and $\gamma^*$, since otherwise 
the least member of $M$ above it would be in $R_N(M)$, 
contradicting the maximality of $\gamma$. 
Since $\beta$ is the least member of $N$ above $\gamma^*$, and 
$N \cap [\gamma,\gamma^*] = \emptyset$, 
it follows that $\beta = \min(N \setminus \gamma)$.
\end{proof}

We would now like to show that $R_M(N)$ is always a subset of $\Gamma$ in 
the case when $\Gamma = \Lambda$. 
This follows from Proposition 2.12, which is proved using Lemma 2.11.

\begin{lemma}
Let $M$ be in $\mathcal X$, $\beta \in M$, and suppose that 
$$
C \cap (\sup(M \cap \beta),\beta) \ne \emptyset.
$$
Then $\beta \in \Lambda$.
\end{lemma}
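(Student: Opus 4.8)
The plan is to unravel what the hypothesis $C \cap (\sup(M \cap \beta),\beta) \ne \emptyset$ tells us about $\beta$, and conclude that $\beta$ must be both a limit point of $C$ and of cofinality $\omega_1$, i.e. that $\beta \in \Lambda$. First I would set $\delta := \sup(M \cap \beta)$. Since $M$ is countable, $\delta$ has cofinality at most $\omega$; in particular $\delta < \beta$ (note $\beta \in M$, so if $\delta = \beta$ then $\beta$ would be a countable-cofinality limit of elements of $M$, but we need $\beta$ to have uncountable cofinality, so this degenerate possibility must be ruled out — see below). By hypothesis we may fix some $\eta \in C$ with $\delta < \eta < \beta$.

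The key step is to show $\beta$ is a limit point of $C$. Here I would use the defining property of $\mathcal{X}$: for all $\gamma \in M$, $\sup(C \cap \gamma) \in M$. Apply this with $\gamma = \beta \in M$: we get $\sup(C \cap \beta) \in M$, hence $\sup(C \cap \beta) \le \delta$. But $\eta \in C \cap \beta$ and $\eta > \delta$, a contradiction — unless $\sup(C \cap \beta) = \beta$, i.e. $\beta$ is a limit point of $C$. So $\beta$ is indeed a limit point of $C$. (The point is that $C \cap \beta$ cannot have a supremum below $\beta$, because such a supremum would have to land inside $M$, hence be $\le \delta$, contradicting the existence of $\eta$.)

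It remains to show $\beta$ has cofinality $\omega_1$, which together with being a limit point of $C$ gives $\beta \in \Lambda$ by the definition of $\Lambda$. Suppose toward a contradiction that $\cf(\beta) = \omega$ (it cannot be a successor since it is a limit point of $C$). Fix a cofinal $\omega$-sequence in $\beta$; by elementarity of $M$ — more precisely, using $\beta \in M$, $M \prec \mathcal{A}$, and $\omega \subseteq M$ — there is such a cofinal sequence lying in $M$, so $\beta = \sup(M \cap \beta) = \delta$. This contradicts $\delta < \eta < \beta$. Hence $\cf(\beta) = \omega_1$, and $\beta \in \Lambda$.

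The main obstacle, such as it is, is making sure the elementarity argument in the last paragraph is watertight: one needs that a cofinal map $\omega \to \beta$, if it exists, can be found inside $M = Sk(M) \cap \omega_2$, which follows because $\beta \in M$, $M \prec \mathcal{A}$ has definable Skolem functions, and $\omega \subseteq M$; then the range of that map is a subset of $M \cap \beta$ cofinal in $\beta$. Everything else is a direct application of the structural property defining membership in $\mathcal{X}$, namely closure under $\gamma \mapsto \sup(C \cap \gamma)$.
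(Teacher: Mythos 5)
Your proof is correct and follows essentially the paper's own argument, just with the two key facts established in reverse order: that $\beta$ is a limit point of $C$ (via the $\mathcal{X}$-closure property applied at $\gamma = \beta$), and that $\cf(\beta) = \omega_1$ (via elementarity of $Sk(M)$). One minor presentational point: $\delta < \beta$ follows immediately from the hypothesis, since $(\delta, \beta)$ contains an element of $C$ and is therefore nonempty, so the opening digression about the cofinality of $\delta$ is unnecessary.
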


\begin{proof}
Since $C \cap (\sup(M \cap \beta),\beta)$ is nonempty, obviously 
$\sup(M \cap \beta) < \beta$. 
This implies that $\beta$ has cofinality $\omega_1$. 
For if $\beta$ has countable cofinality, 
then easily by elementarity, $M \cap \beta$ is cofinal in $\beta$, 
which contradicts that $\sup(M \cap \beta) < \beta$.

By the definition of $\Lambda$, to show that $\beta$ is in $\Lambda$ 
it suffices to show that $\beta$ is a limit point of $C$.
Suppose for a contradiction that $\beta$ is not a limit point of $C$. 
Then $\sup(C \cap \beta) < \beta$. 
Since $M \in \mathcal X$, by the definition of $\mathcal X$ it follows that 
$\sup(C \cap \beta) \in M \cap \beta$. 
But by assumption, there is $\gamma \in C$ with 
$\sup(M \cap \beta) < \gamma < \beta$, which is a contradiction.
\end{proof}

\begin{proposition}
Let $\{ M, N \}$ be adequate. 
Then $R_M(N)$ and $R_N(M)$ are subsets of $\Lambda$.
\end{proposition}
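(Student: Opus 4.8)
The plan is to prove the slightly stronger statement that $R_M(N) \cup R_N(M) \subseteq \Lambda$, treating the two remainder sets together, since Lemma 2.10 (and its mirror image, obtained by exchanging $M$ and $N$, which is legitimate because $\beta_{M,N} = \beta_{N,M}$ and adequacy is symmetric) expresses each element of one remainder set either in terms of $\beta_{M,N}$ or in terms of a \emph{strictly smaller} element of the other remainder set. This is the feature that makes an induction possible.

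The first step is an auxiliary observation $(\star)$: if $\theta \in \Lambda$ and $N \setminus \theta \neq \emptyset$, then $\min(N \setminus \theta) \in \Lambda$, and symmetrically with $M$ in place of $N$. To prove $(\star)$, put $\beta := \min(N \setminus \theta)$. If $\theta \in N$ then $\beta = \theta \in \Lambda$. Otherwise $\theta < \beta$ and, since no element of $N$ lies in $[\theta,\beta)$, we get $N \cap \beta = N \cap \theta$, so $\sup(N \cap \beta) = \sup(N \cap \theta)$; as $\cf(\theta) = \omega_1$ and $N$ is countable, $\sup(N \cap \theta) < \theta$. Because $\theta$ is a limit point of $C$, the set $C \cap (\sup(N \cap \beta), \theta)$ is nonempty, hence so is $C \cap (\sup(N \cap \beta), \beta)$. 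Since also $\beta \in N \in \mathcal X$, Lemma 2.11 (with $N$ playing the role of ``$M$'') yields $\beta \in \Lambda$.

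With $(\star)$ in hand I carry out the main argument by induction on the elements of the set $R := R_M(N) \cup R_N(M)$, which is finite by Proposition 2.9. So let $\beta \in R$, assume every element of $R$ strictly below $\beta$ lies in $\Lambda$, and show $\beta \in \Lambda$; by symmetry assume $\beta \in R_M(N)$. If $\beta = \min(N \setminus \beta_{M,N})$, then since $\beta_{M,N} \in \Gamma \subseteq \Lambda$, applying $(\star)$ with $\theta = \beta_{M,N}$ gives $\beta \in \Lambda$. Otherwise Lemma 2.10 furnishes $\gamma \in R_N(M)$ with $\gamma \geq \beta_{M,N}$ and $\beta = \min(N \setminus \gamma)$. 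Since $R_N(M) \subseteq M$ we have $\gamma \in M$ with $\gamma \geq \beta_{M,N}$, so $\gamma \notin N$ by Proposition 2.6, and hence $\gamma < \beta$. By the induction hypothesis $\gamma \in \Lambda$, and then $(\star)$ applied with $\theta = \gamma$ gives $\beta = \min(N \setminus \gamma) \in \Lambda$. This completes the induction, so in particular $R_M(N)$ and $R_N(M)$ are subsets of $\Lambda$.

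The step I expect to require the most care is the setup of the induction rather than any individual computation: one must pass through \emph{both} remainder sets simultaneously, use Lemma 2.10 to replace a given $\beta$ by a witness $\gamma$ that genuinely lies in $R_N(M)$ and is genuinely smaller than $\beta$ (the strict inequality coming from Proposition 2.6, since $\gamma \in M$ sits at or above $\beta_{M,N}$ and therefore cannot also lie in $N$), and bottom the recursion out at $\beta_{M,N}$, whose membership in $\Lambda$ is immediate from $\Gamma \subseteq \Lambda$. Everything else reduces to the short verification in $(\star)$, with Lemma 2.11 doing the real work of turning ``$C$ has a point in $N$ just below $\beta$'' into ``$\beta \in \Lambda$''.
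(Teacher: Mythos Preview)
Your proof is correct and follows essentially the same route as the paper's: both argue by induction on the finite set $R_M(N)\cup R_N(M)$, use Lemma~2.10 to pass from a given remainder point to a strictly smaller one in the other remainder set, and invoke Lemma~2.11 to extract membership in $\Lambda$. The only cosmetic difference is that you isolate the application of Lemma~2.11 as a separate observation~$(\star)$, whereas the paper performs that computation inline in each case; the logical content is the same.
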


\begin{proof}
We prove by induction on $\alpha$ that if $\alpha \ge \beta_{M,N}$ is 
in $R_M(N) \cup R_N(M)$, then $\alpha \in \Lambda$. 
So let $\alpha$ be given, and assume that the statement is true for all smaller ordinals. 
We handle only the case when $\alpha \in R_N(M)$, since the proof of the 
case when $\alpha \in R_M(N)$ is the same except with the roles of 
$M$ and $N$ reversed.

First, suppose that $\alpha = \min(M \setminus \beta_{M,N})$. 
If $\alpha = \beta_{M,N}$, then $\alpha \in \Lambda$ by definition. 
Otherwise 
$$
\sup(M \cap \alpha) < \beta_{M,N} < \alpha.
$$
So $\beta_{M,N} \in C \cap (\sup(M \cap \alpha),\alpha)$, which implies that 
$\alpha \in \Lambda$ by Lemma 2.11.

Secondly, suppose that 
$\alpha$ is not equal to $\min(M \setminus \beta_{M,N})$, and 
$\alpha = \min(M \setminus \gamma)$ for some 
$\gamma \in N \setminus \beta_{M,N}$. 
By Lemma 2.10, without loss of generality 
we may assume that $\gamma \in R_M(N)$. 
By the inductive hypothesis, $\gamma \in \Lambda \subseteq C$. 
Clearly 
$$
\sup(M \cap \alpha) < \gamma < \alpha.
$$
So $C \cap (\sup(M \cap \alpha),\alpha) \ne \emptyset$. 
By Lemma 2.11, $\alpha \in \Lambda$.
\end{proof}

\section{Adequate Sets of Models}

In this section we introduce methods for 
extending adequate sets of models to larger adequate sets. 
The use of these methods for preserving cardinals in forcing with models as 
side conditions will be demonstrated in the next section.

First we prove a couple of technical lemmas.

\begin{lemma}
Let $M \in \mathcal X$, $\beta \in \Gamma$, and suppose that 
$M \subseteq \beta$. 
Then $\Gamma_M \subseteq \beta+1$. 
Therefore for all $N \in \mathcal X$, $\beta_{M,N} \le \beta$.
\end{lemma}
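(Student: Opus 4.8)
The plan is to prove the displayed inclusion $\Gamma_M \subseteq \beta + 1$ directly by contradiction, and then to read off the bound on $\beta_{M,N}$ from the definition of the comparison point (Notation 2.5).

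First I would fix an arbitrary $\gamma \in \Gamma_M$ and suppose, toward a contradiction, that $\gamma > \beta$. Since $M \subseteq \beta$, the hypothesis $\gamma > \beta$ gives $M \cap \gamma = M$, so $\sup(M \cap \gamma) = \sup(M) \le \beta < \gamma$. Now $\beta \in \Gamma$ by hypothesis, and $\beta$ lies in the half-open interval $[\sup(M \cap \gamma), \gamma)$; equivalently, $\beta$ is an element of $\Gamma \setminus (\sup(M \cap \gamma))$ that is strictly below $\gamma$. This contradicts the equality $\gamma = \min(\Gamma \setminus (\sup(M \cap \gamma)))$, which is exactly what membership of $\gamma$ in $\Gamma_M$ asserts (Definition 2.2). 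Hence every $\gamma \in \Gamma_M$ satisfies $\gamma \le \beta$, i.e. $\Gamma_M \subseteq \beta + 1$.

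For the second assertion, let $N \in \mathcal X$ be arbitrary. By the definition of the comparison point $\beta_{M,N}$ (Notation 2.5), $\beta_{M,N}$ is the largest element of $\Gamma_M \cap \Gamma_N$, and in particular $\beta_{M,N} \in \Gamma_M$. By the first part, $\beta_{M,N} \le \beta$, as desired. The argument is short enough that there is no genuine obstacle; the only point deserving a moment's attention is that the interval $[\sup(M \cap \gamma), \gamma)$ is closed on the left, so that the non-strict inequality $\sup(M) \le \beta$ — which is all that $M \subseteq \beta$ guarantees, since $\sup(M)$ could equal $\beta$ — still suffices to place $\beta$ inside it.
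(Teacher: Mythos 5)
Your proof is correct and takes essentially the same approach as the paper: pick $\gamma \in \Gamma_M$, bound $\sup(M \cap \gamma)$ by $\beta$, and use the minimality defining $\Gamma_M$ together with $\beta \in \Gamma$ to conclude $\gamma \le \beta$. The only cosmetic difference is that the paper gets the strict inequality $\sup(M) < \beta$ from $\cf(\beta) = \omega_1$ and countability of $M$, whereas you observe — correctly — that the non-strict $\sup(M) \le \beta$ coming from $M \subseteq \beta$ alone already suffices, since $\Gamma \setminus \sup(M \cap \gamma)$ is closed on the left and so contains $\beta$ either way.
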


\begin{proof}
Since $M \subseteq \beta$ and $\cf(\beta) = \omega_1$, 
$\sup(M) < \beta$. 
Let $\gamma \in \Gamma_M$ be given. 
Then $\sup(M \cap \gamma) \le \sup(M) < \beta$. 
Since $\beta \in \Gamma$ and $\gamma = 
\min(\Gamma \setminus \sup(M \cap \gamma))$, it follows that 
$\gamma \le \beta$. 
This proves that $\Gamma_M \subseteq \beta+1$. 
In particular, if $N \in \mathcal X$, 
then by definition, $\beta_{M,N} \in \Gamma_M$, so $\beta_{M,N} \le \beta$.
\end{proof}

\begin{lemma}
Let $K, M, N \in \mathcal X$, and suppose that $M \subseteq N$. 
Then $\beta_{M,K} \le \beta_{N,K}$.
\end{lemma}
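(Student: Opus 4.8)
The plan is to unpack the definition of the comparison points $\beta_{M,K}$ and $\beta_{N,K}$ via the countable sets $\Gamma_M$, $\Gamma_N$, and to use the monotonicity of $\Gamma_{(\cdot)}$ under $\subseteq$ already established in Lemma~2.2. Recall $\beta_{M,K} = \max(\Gamma_M \cap \Gamma_K)$ and $\beta_{N,K} = \max(\Gamma_N \cap \Gamma_K)$. Since $M \subseteq N$, Lemma~2.2 gives $\Gamma_M \subseteq \Gamma_N$, so $\Gamma_M \cap \Gamma_K \subseteq \Gamma_N \cap \Gamma_K$. Taking maxima of these two nonempty sets (both contain the least element of $\Gamma$, as noted in the proof of Lemma~2.4) immediately yields $\beta_{M,K} \le \beta_{N,K}$.

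So the argument is essentially a one-line consequence of Lemma~2.2 together with the definition of the comparison point as the largest element of the relevant intersection. There is no real obstacle here; the only point requiring a moment's care is to note that both intersections $\Gamma_M \cap \Gamma_K$ and $\Gamma_N \cap \Gamma_K$ are nonempty — which holds because the first element of $\Gamma$ lies in $\Gamma_L$ for every $L \in \mathcal X$ — so that the maxima in question genuinely exist (this is exactly what Lemma~2.4 provides) and the inequality $\max(S) \le \max(T)$ for $S \subseteq T$ is legitimate.

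I would write it as follows.

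\begin{proof}
By Lemma 2.2, since $M \subseteq N$ we have $\Gamma_M \subseteq \Gamma_N$.
Therefore
$$
\Gamma_M \cap \Gamma_K \subseteq \Gamma_N \cap \Gamma_K.
$$
Both of these sets are nonempty, as each contains the least element of $\Gamma$, and by Lemma 2.4 each has a largest element.
Since $\beta_{M,K}$ is the largest element of $\Gamma_M \cap \Gamma_K$ and $\beta_{N,K}$ is the largest element of $\Gamma_N \cap \Gamma_K$, the inclusion above gives $\beta_{M,K} \le \beta_{N,K}$.
\end{proof}
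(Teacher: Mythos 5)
Your proof is correct and follows essentially the same route as the paper: pass from $M \subseteq N$ to $\Gamma_M \subseteq \Gamma_N$ (the paper's Lemma~2.3, which you cite as 2.2), intersect with $\Gamma_K$, and take maxima. The only discrepancy is the lemma number; the substance matches exactly.
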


\begin{proof}
Since $M \subseteq N$, $\Gamma_M \subseteq \Gamma_N$ by Lemma 2.3. 
So $\Gamma_M \cap \Gamma_K \subseteq \Gamma_N \cap \Gamma_K$. 
Hence $\beta_{M,K} = \max( \Gamma_M \cap \Gamma_K ) \le 
\max( \Gamma_N \cap \Gamma_K ) = \beta_{N,K}$.
\end{proof}

The next two results show that if you start with an adequate set $A$, 
and add to $A$ models of the form $M \cap \beta$, where $M \in A$ 
and $\beta \in \Gamma$, then the bigger set is also adequate.

\begin{lemma}
Suppose that $\{ M, N \}$ is adequate and $\beta \in \Gamma$. 
Then $\{ M \cap \beta, N \}$ is adequate.
\end{lemma}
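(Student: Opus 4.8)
The plan is to first reduce to understanding the comparison point $\beta_{M \cap \beta, N}$ in terms of $\beta_{M,N}$ and $\beta$. I would start from Lemma 3.2 applied with $M \cap \beta \subseteq M$: it gives $\beta_{M \cap \beta, N} \le \beta_{M,N}$. Separately, since $M \cap \beta \subseteq \beta$ and $\beta \in \Gamma$, Lemma 3.1 gives $\beta_{M \cap \beta, N} \le \beta$. So $\beta_{M \cap \beta, N} \le \min(\beta_{M,N}, \beta)$. The first order of business is to prove the reverse inequality, i.e.\ $\beta_{M \cap \beta, N} = \min(\beta_{M,N}, \beta)$. This should follow by a direct argument on the definition of $\Gamma_{M \cap \beta}$: one checks that $\Gamma_{M \cap \beta} \cap (\min(\beta_{M,N},\beta)+1) = \Gamma_M \cap (\min(\beta_{M,N},\beta)+1)$, using that $M$ and $M \cap \beta$ agree below $\beta$, hence $\sup((M \cap \beta) \cap \gamma) = \sup(M \cap \gamma)$ for all $\gamma \le \beta$; and that $\min(\beta_{M,N},\beta) \in \Gamma_{M \cap \beta} \cap \Gamma_N$, so it is $\le \beta_{M\cap\beta, N}$ by maximality.

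With $\beta^* := \beta_{M \cap \beta, N} = \min(\beta_{M,N}, \beta)$ in hand, I would verify adequacy of $\{M \cap \beta, N\}$ by cases on how $M$ and $N$ compare. Case (a): $M \cap \beta_{M,N} = N \cap \beta_{M,N}$. Then intersecting with $\beta^* \le \beta_{M,N}$ gives $(M \cap \beta) \cap \beta^* = M \cap \beta^* = N \cap \beta^* = N \cap \beta^*$, which is the first clause of adequacy. Case (b): $M \cap \beta_{M,N} \in Sk(N)$. If $\beta^* = \beta_{M,N}$ (i.e.\ $\beta_{M,N} \le \beta$) then $(M \cap \beta) \cap \beta^* = M \cap \beta_{M,N} \in Sk(N)$ directly. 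If $\beta^* = \beta < \beta_{M,N}$, then $(M \cap \beta) \cap \beta^* = M \cap \beta = (M \cap \beta_{M,N}) \cap \beta$; since $M \cap \beta_{M,N} \in Sk(N)$ and $\beta \in \Gamma \subseteq Sk(N)$ would need $\beta \in Sk(N)$ — here I would instead note $\beta \in Sk(N)$ is not automatic, so the cleaner route is: $M \cap \beta$ is an initial segment of $M \cap \beta_{M,N}$, so by Lemma 1.9 it lies in $Sk(N)$. Case (c): $N \cap \beta_{M,N} \in Sk(M)$. Then $N \cap \beta^* = (N \cap \beta_{M,N}) \cap \beta^*$ is an initial segment of $N \cap \beta_{M,N}$, hence in $Sk(M)$ by Lemma 1.9; and since $\beta \in \Gamma \subseteq C$ with $\beta \in Sk(M)$ not guaranteed — but $N \cap \beta^* \subseteq \beta^* \le \beta$ and actually $N \cap \beta^* \in Sk(M)$ suffices because $Sk(M) \cap \beta$... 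I would argue $N \cap \beta^* \in Sk(M) \cap Sk(\beta) \subseteq Sk(M) = Sk(M \cap \beta)$ is false in general; the correct observation is that $N \cap \beta^* \in Sk(M)$ and $N \cap \beta^* \subseteq \beta^* \le \beta$ together with $Sk(M \cap \beta) = Sk(M) \cap Sk(\beta)$ (Lemma 1.6, valid as $\beta \in C$) give $N \cap \beta^* \in Sk(M \cap \beta)$, since $N \cap \beta^*$ is definable in $\mathcal{A}$ from $N \cap \beta_{M,N} \in Sk(M)$ and $\beta^* \le \beta$, and it is a subset of $\beta$, hence in $Sk(\beta)$ as well.

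The main obstacle I anticipate is exactly Case (c) and the borderline of Case (b): showing membership \emph{in $Sk(M \cap \beta)$} rather than merely in $Sk(M)$, because $M \cap \beta$ is a strictly smaller model. The clean tool is Lemma 1.6, which identifies $Sk(M \cap \beta)$ with $Sk(M) \cap Sk(\beta)$ whenever $\beta \in C$ (and $\Gamma \subseteq \Lambda \subseteq C$, so $\beta \in C$). Thus I would organize the endgame around: an object that is (i) in $Sk(M)$ and (ii) a bounded subset of $\beta$, hence in $Sk(\beta)$ by elementarity of $Sk(\beta)$ plus Assumption 2 / Proposition 1.11 reasoning if needed, actually just because any element of $Sk(M)$ that is a subset of $\beta$ and is coded by an ordinal $< \beta$ lies in $Sk(\beta)$ — more carefully, $N \cap \beta^*$ is an initial segment of the element $N \cap \beta_{M,N} \in Sk(M)$, and initial segments of a set in a model are parametrized by ordinals, so once $\beta^* \in Sk(\beta)$ we get $N \cap \beta^* \in Sk(M) \cap Sk(\beta)$. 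Since $\beta^* = \min(\beta_{M,N},\beta)$ and $\beta \in C$, one checks $\beta^* \in Sk(\beta)$: if $\beta^* = \beta$ trivially (well, $\beta \notin Sk(\beta)$ — so instead use that $\beta^* = \min(N \setminus \beta^*)$'s predecessor... ). Given these subtleties, I would prove a small auxiliary claim first — "for $\beta \in \Gamma$ and any $\gamma \le \beta$, $\sup(M \cap \gamma) \in Sk(\beta)$ and $M \cap \gamma \in Sk(\beta)$ when $M \cap \gamma \in Sk(M)$" — and then feed it into the three cases. The rest is bookkeeping with Lemmas 1.6, 1.8, 1.9.
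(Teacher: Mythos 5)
Your overall framework matches the paper's: establish $\beta_{M\cap\beta,N} \le \beta_{M,N}$ (Lemma 3.2) and $\beta_{M\cap\beta,N} \le \beta$ (Lemma 3.1), then split into the three ways $M$ and $N$ can compare, using initial-segment closure (Lemma 1.9) and the identity $Sk(M\cap\beta) = Sk(M)\cap Sk(\beta)$ for $\beta \in C$. However, there are two problems.

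First, the equality $\beta_{M\cap\beta,N} = \min(\beta_{M,N},\beta)$ that you propose to prove at the outset is false in general. When $\beta < \beta_{M,N}$, there is no reason for $\beta$ to lie in $\Gamma_{M\cap\beta}$: that would require $\Gamma \cap [\sup(M\cap\beta),\beta) = \emptyset$, but $\sup(M\cap\beta)$ may be well below $\beta$ with $\Gamma$-points in between, and $\beta \in \Gamma_N$ is likewise not automatic. Fortunately the proof only needs the two $\le$ bounds, but your case splits in (b) and (c) ("if $\beta^* = \beta_{M,N}$", "if $\beta^* = \beta < \beta_{M,N}$") implicitly rest on the equality and so are not exhaustive. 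You should simply write $(M\cap\beta)\cap\beta^* = M\cap\beta^*$ (valid since $\beta^* \le \beta$) and note this is always an initial segment of $M\cap\beta_{M,N}$ (since $\beta^* \le \beta_{M,N}$), with no subcases.

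Second, and more seriously, the endgame of case (c) has a genuine gap. You need $N\cap\beta^* \in Sk(\beta)$, and you try to get it from $\beta^* \in Sk(\beta)$ — which, as you notice yourself, fails when $\beta^* = \beta$ (indeed $\beta \notin Sk(\beta)$). The fallback ("any element of $Sk(M)$ that is a subset of $\beta$ and is coded by an ordinal $< \beta$ lies in $Sk(\beta)$") is not a valid inference. The step that actually does the work is Proposition 1.11, which uses the thinness of $\mathcal Y$ in an essential way, and it must be applied at $\beta^*$ rather than at $\beta$: since $\beta^* = \beta_{M\cap\beta,N} \in \Gamma \subseteq \Lambda$ and $N\cap\beta^* \in \mathcal X$, Proposition 1.11 gives $N\cap\beta^* \in Sk(\beta^*) \subseteq Sk(\beta)$. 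Combined with $N\cap\beta^* \in Sk(M)$ from Lemma 1.9, Lemma 1.4 then yields $N\cap\beta^* \in Sk(M)\cap Sk(\beta) = Sk(M\cap\beta)$. You mention Proposition 1.11 as an option but do not commit to it; in fact it is indispensable, and none of your proposed alternatives close the gap.
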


\begin{proof}
Since $M \cap \beta \subseteq M$, 
$\beta_{M \cap \beta,N} \le \beta_{M,N}$ by Lemma 3.2. 
Also since $M \cap \beta \subseteq \beta$, 
$\beta_{M \cap \beta,N} \le \beta$ by Lemma 3.1. 

To show that $\{ M \cap \beta, N \}$ is adequate, we split into three 
cases depending on how $M$ and $N$ compare.

(1) Suppose that $M \cap \beta_{M,N} = N \cap \beta_{M,N}$. 
Since $\beta_{M \cap \beta,N} \le \beta_{M,N}$, we get that 
$$
M \cap \beta_{M \cap \beta,N} = N \cap \beta_{M \cap \beta,N}.
$$
As $\beta_{M \cap \beta,N} \le \beta$,
$$
(M \cap \beta) \cap \beta_{M \cap \beta,N} = 
M \cap \beta_{M \cap \beta,N} = N \cap \beta_{M \cap \beta,N}.
$$

(2) Suppose that $M \cap \beta_{M,N} \in Sk(N)$. 
Since $\beta_{M \cap \beta,N} \le \beta$, we have that 
$(M \cap \beta) \cap \beta_{M \cap \beta,N} = 
M \cap \beta_{M \cap \beta,N}$. 
As $\beta_{M \cap \beta,N} \le \beta_{M,N}$, it follows that 
$M \cap \beta_{M \cap \beta,N}$ is an initial segment of 
$M \cap \beta_{M,N}$. 
But $M \cap \beta_{M,N} \in Sk(N)$, so the initial segment 
$(M \cap \beta) \cap \beta_{M \cap \beta,N} = M \cap \beta_{M \cap \beta,N}$ 
is in $Sk(N)$.

(3) Suppose that $N \cap \beta_{M,N} \in Sk(M)$. 
Then $N \cap \beta_{M \cap \beta,N} \in Sk(M)$, since the inequality 
$\beta_{M \cap \beta,N} \le \beta_{M,N}$ implies that 
$N \cap \beta_{M \cap \beta,N}$ 
it is an initial segment of $N \cap \beta_{M,N}$. 
By Proposition 1.11 and the inequality $\beta_{M \cap \beta,N} \le \beta$, 
we have that 
$$
N \cap \beta_{M \cap \beta,N} \in 
Sk(\beta_{M \cap \beta,N}) \subseteq Sk(\beta).
$$
So by Lemma 1.4, 
$$
N \cap \beta_{M \cap \beta,N} \in Sk(M) \cap Sk(\beta) = 
Sk(M \cap \beta).
$$
\end{proof}

\begin{proposition}
Suppose that $A$ is adequate, $A \subseteq B \subseteq \mathcal X$, 
and for all 
$K \in B \setminus A$, there is $M \in A$ and $\beta \in \Gamma$ 
such that $K = M \cap \beta$. 
Then $B$ is adequate.
\end{proposition}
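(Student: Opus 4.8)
The plan is to verify the adequacy condition directly for an arbitrary pair $\{ P, Q \}$ with $P, Q \in B$. Since adequacy of a set is a condition quantified over pairs of its elements, it suffices to show that $\{ P, Q \}$ is adequate for each such pair. I would split into three cases according to how many of $P, Q$ lie in $A$, and in every case reduce to Lemma 3.3.

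First, if both $P$ and $Q$ belong to $A$, then $\{ P, Q \}$ is adequate simply because $A$ is adequate. Second, suppose exactly one of them, say $Q$, lies in $B \setminus A$; fix $N \in A$ and $\gamma \in \Gamma$ with $Q = N \cap \gamma$. Then $P$ and $N$ are both in $A$, so $\{ N, P \}$ is adequate, and applying Lemma 3.3 with the parameter $\gamma \in \Gamma$ gives that $\{ N \cap \gamma, P \} = \{ Q, P \}$ is adequate, as desired.

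The remaining case is when both $P$ and $Q$ lie in $B \setminus A$. Fix $M, N \in A$ and $\beta, \gamma \in \Gamma$ with $P = M \cap \beta$ and $Q = N \cap \gamma$. Since $M$ and $N$ are both in $A$, the pair $\{ M, N \}$ is adequate. Applying Lemma 3.3 with the parameter $\beta \in \Gamma$, the pair $\{ M \cap \beta, N \}$ is adequate. Now I would regard this pair as $\{ N, M \cap \beta \}$ and apply Lemma 3.3 again, this time with the parameter $\gamma \in \Gamma$, to conclude that $\{ N \cap \gamma, M \cap \beta \} = \{ Q, P \}$ is adequate. (One should note that $P, Q \in \mathcal{X}$ is part of the hypothesis $B \subseteq \mathcal{X}$, so there is no issue about the objects $M \cap \beta$, $N \cap \gamma$ lying in $\mathcal{X}$; this also follows from Lemma 1.10 since $\Gamma \subseteq \Lambda$.)

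There is no real obstacle here: the only point requiring a little care is that Lemma 3.3 only cuts down the \emph{first} coordinate of an adequate pair, so in the last case one must reorder the pair between the two applications of the lemma, cutting $M$ first and then $N$. Everything else is immediate from the hypotheses and Lemma 3.3.
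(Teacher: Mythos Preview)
Your proof is correct and follows essentially the same approach as the paper's own proof: reduce to checking pairs, handle the case where at least one element lies in $A$ directly, and in the remaining case apply Lemma~3.3 twice in succession, first cutting $M$ by $\beta$ and then $N$ by $\gamma$. The only difference is cosmetic---the paper merges your first two cases into a single sentence, while you spell them out separately and add the (harmless) remark about reordering the pair between the two applications of Lemma~3.3.
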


\begin{proof}
It suffices to show that for all $K, L \in B$, the set $\{ K, L \}$ is adequate. 
By Lemma 3.3 and the fact that $A$ is adequate, 
this is true if at least one of $K$ or $L$ is in $A$. 
So assume that $K$ and $L$ are both in $B \setminus A$. 
Fix $M, N \in A$ and $\beta, \gamma \in \Gamma$ such that 
$K = M \cap \beta$ and $L = N \cap \gamma$. 
Then $\{ M \cap \beta, N \}$ is adequate by Lemma 3.3. 
Hence $\{ M \cap \beta, N \cap \gamma \}$ is adequate again by Lemma 3.3.
\end{proof}

The next result says that adding to an adequate set $A$ a model whose 
Skolem hull contains the elements of $A$ results in an adequate set.

\begin{proposition}
Let $A$ be adequate, and let $N \in \mathcal X$ satisfy that 
$A \subseteq Sk(N)$. 
Then $A \cup \{ N \}$ is adequate. 
In particular, if $M$ and $N$ are in $\mathcal X$ and $M \in Sk(N)$, 
then $\{ M, N \}$ is adequate.
\end{proposition}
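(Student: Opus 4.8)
The plan is to reduce the general statement to the ``in particular'' clause, and then to prove that clause as essentially a one-line consequence of Lemma 1.9.

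First I would note that to verify $A \cup \{N\}$ is adequate it suffices to check that $\{K,L\}$ is adequate for every pair $K, L \in A \cup \{N\}$. When both $K$ and $L$ lie in $A$, this holds because $A$ is assumed adequate (and the case $K = L$ is trivial). The only remaining case is the one where one of the two models is $N$ and the other is some $M \in A$; since $A \subseteq Sk(N)$, this $M$ satisfies $M \in Sk(N)$, so this is precisely the situation addressed by the ``in particular'' clause. Thus the whole proposition follows once that clause is established.

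So the heart of the argument is: if $M, N \in \mathcal{X}$ and $M \in Sk(N)$, then $\{M,N\}$ is adequate, and in fact the relevant alternative in Definition 2.5 is $M \cap \beta_{M,N} \in Sk(N)$. The key observation is that $M \cap \beta_{M,N}$ is just $M$ intersected with an ordinal, hence an initial segment of $M$, and Lemma 1.9 says that when $M \in Sk(N)$ every initial segment of $M$ belongs to $Sk(N)$. Therefore $M \cap \beta_{M,N} \in Sk(N)$, and $\{M,N\}$ is adequate.

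I do not anticipate a real obstacle here; the only point worth stressing is that the exact location of the comparison point $\beta_{M,N}$ is irrelevant --- we never compute it or use any of its properties, only the trivial fact that intersecting $M$ with an arbitrary ordinal yields an initial segment of $M$. If one wants to be completely careful about the edge case, when $M \subseteq \beta_{M,N}$ one has $M \cap \beta_{M,N} = M \in Sk(N)$ by hypothesis, and otherwise $M \cap \beta_{M,N}$ is a proper initial segment of $M$, covered directly by Lemma 1.9.
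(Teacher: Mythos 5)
Your proof is correct, but it takes a genuinely different route from the paper's. The paper observes that $M \in Sk(N)$ implies $\sup(M) \in N$, and then invokes Proposition 2.6 to conclude $\beta_{M,N} > \sup(M)$; hence $M \cap \beta_{M,N} = M$, which lies in $Sk(N)$ by hypothesis. You instead sidestep the comparison-point machinery entirely and appeal to Lemma 1.9, noting that $M \cap \beta_{M,N}$ is an initial segment of $M$ regardless of where $\beta_{M,N}$ falls, and therefore belongs to $Sk(N)$ (with the edge case $M \cap \beta_{M,N} = M$ handled trivially). Both arguments are sound. The paper's version buys the extra information that $M \cap \beta_{M,N}$ is all of $M$ (equivalently, that $\beta_{M,N}$ strictly exceeds $\sup(M)$ whenever $M \in Sk(N)$), a fact that is convenient elsewhere; your version is more self-contained, using only the elementary Lemma 1.9 from Section 1 rather than the structural Proposition 2.6 about limit points and comparison points. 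Your reduction to pairs in the first paragraph is also fine and matches the observation the paper records just before Proposition 3.9.
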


\begin{proof}
Let $M \in A$. 
Then $M \in Sk(N)$, which implies that $\sup(M) \in N$. 
Hence $\beta_{M,N} > \sup(M)$ by Proposition 2.6. 
Thus $M \cap \beta_{M,N} = M \in Sk(N)$.
\end{proof}

An essential part of the arguments for preserving cardinals in 
forcing with models as side conditions will be 
to amalgamate conditions over elementary substructures. 
In particular, this involves amalgamating adequate sets of models. 
Amalgamation over countable models is handled in 
Proposition 3.9, and amalgamation over models of size $\omega_1$ 
is handled in Proposition 3.11.

First we prove two technical lemmas.

\begin{lemma}
Let $M$ and $N$ be in $\mathcal X$ and let $\beta \in \Gamma$. 
If $\beta_{M,N} \le \beta$, then 
$\beta_{M,N} = \beta_{M \cap \beta,N}$.
\end{lemma}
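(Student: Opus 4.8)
The plan is to show the two inequalities $\beta_{M\cap\beta,N} \le \beta_{M,N}$ and $\beta_{M,N} \le \beta_{M\cap\beta,N}$ separately, working directly from the characterization $\beta_{K,L} = \max(\Gamma_K \cap \Gamma_L)$. Note first that $\beta \in \Gamma \subseteq \Lambda$, so $M \cap \beta \in \mathcal{X}$ by Lemma 1.10 and the parameter $\beta_{M\cap\beta,N}$ is well defined.

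For the first inequality, I would observe that $M \cap \beta \subseteq M$, so by Lemma 2.3 we have $\Gamma_{M\cap\beta} \subseteq \Gamma_M$, hence $\Gamma_{M\cap\beta} \cap \Gamma_N \subseteq \Gamma_M \cap \Gamma_N$, and taking maxima gives $\beta_{M\cap\beta,N} \le \beta_{M,N}$.

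For the reverse inequality, the key point is that truncating $M$ at $\beta$ does not disturb the part of $M$ relevant to $\beta_{M,N}$, precisely because $\beta_{M,N} \le \beta$. Indeed, since $\beta_{M,N} \le \beta$ we have $(M \cap \beta) \cap \beta_{M,N} = M \cap \beta_{M,N}$, and so $\sup((M\cap\beta) \cap \beta_{M,N}) = \sup(M \cap \beta_{M,N})$. Because $\beta_{M,N} \in \Gamma_M$, by definition $\beta_{M,N} = \min(\Gamma \setminus \sup(M \cap \beta_{M,N})) = \min(\Gamma \setminus \sup((M\cap\beta) \cap \beta_{M,N}))$, which says exactly that $\beta_{M,N} \in \Gamma_{M\cap\beta}$. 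Since also $\beta_{M,N} \in \Gamma_N$, we conclude $\beta_{M,N} \in \Gamma_{M\cap\beta} \cap \Gamma_N$, whence $\beta_{M,N} \le \max(\Gamma_{M\cap\beta}\cap\Gamma_N) = \beta_{M\cap\beta,N}$. Combining the two inequalities yields the equality.

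I do not anticipate a genuine obstacle here; the whole content is the elementary set-theoretic identity $(M\cap\beta)\cap\beta_{M,N} = M\cap\beta_{M,N}$ that follows from $\beta_{M,N}\le\beta$, together with the unwinding of the definition of $\Gamma_M$. The only point requiring a little care is making sure $\beta_{M\cap\beta,N}$ is meaningful, which is why one invokes $\Gamma\subseteq\Lambda$ and Lemma 1.10 at the outset.
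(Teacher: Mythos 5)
Your proof is correct and follows essentially the same route as the paper: both establish $\beta_{M,N} \in \Gamma_{M\cap\beta}$ via the identity $(M\cap\beta)\cap\beta_{M,N} = M\cap\beta_{M,N}$ and then use Lemma 2.3 to conclude maximality. The only cosmetic difference is that you split the argument into two inequalities while the paper phrases it as a single maximality claim; you also helpfully make explicit why $M \cap \beta \in \mathcal{X}$, which the paper leaves tacit.
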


\begin{proof}
Since $\beta_{M,N} \le \beta$, 
$$
\sup((M \cap \beta) \cap \beta_{M,N}) = \sup(M \cap \beta_{M,N}).
$$
Therefore  
$$
\min(\Gamma \setminus \sup((M \cap \beta) \cap \beta_{M,N})) = 
\min(\Gamma \setminus \sup(M \cap \beta_{M,N})) = \beta_{M,N}.
$$
By the definition of $\Gamma_{M \cap \beta}$, we have that 
$\beta_{M,N} \in \Gamma_{M \cap \beta}$. 
It follows that $\beta_{M,N}$ is the largest element of 
$\Gamma_{M \cap \beta} \cap \Gamma_N$, since it is the largest 
element of $\Gamma_M \cap \Gamma_N$ by definition, and 
$\Gamma_{M \cap \beta} \cap \Gamma_N \subseteq 
\Gamma_M \cap \Gamma_N$ by Lemma 2.3. 
So $\beta_{M,N} = \beta_{M \cap \beta,N}$.
\end{proof}

\begin{lemma}
Let $M$ and $N$ be in $\mathcal X$ and let $\beta \in \Gamma$. 
If $N \subseteq \beta$, then 
$\beta_{M,N} = \beta_{M \cap \beta,N}$.
\end{lemma}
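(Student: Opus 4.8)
The plan is to reduce the statement to Lemma 3.6, whose hypothesis is exactly $\beta_{M,N} \le \beta$. So the only thing to check is that the stronger hypothesis $N \subseteq \beta$ forces $\beta_{M,N} \le \beta$, and this is immediate from Lemma 3.1. Concretely, I would apply Lemma 3.1 with the roles of $M$ and $N$ interchanged: since $N \in \mathcal X$, $\beta \in \Gamma$, and $N \subseteq \beta$, Lemma 3.1 gives $\Gamma_N \subseteq \beta + 1$, and hence $\beta_{N,K} \le \beta$ for every $K \in \mathcal X$, in particular $\beta_{N,M} \le \beta$. Using that the comparison point is symmetric in its two arguments — $\beta_{M,N}$ is by definition the largest element of $\Gamma_M \cap \Gamma_N = \Gamma_N \cap \Gamma_M$ — this says $\beta_{M,N} \le \beta$. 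With that inequality in hand, Lemma 3.6 applies verbatim and yields $\beta_{M,N} = \beta_{M \cap \beta, N}$, which is the assertion.

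There is no genuine obstacle here; all of the content sits in Lemmas 3.1 and 3.6, and the present lemma is just the observation that the two hypotheses match up. If one wants to double-check that Lemma 3.1 is applicable, the relevant background facts are that $N$ is countable (as $N \in \mathcal X \subseteq \mathcal Y \subseteq P_{\omega_1}(\omega_2)$) and that $\beta$ has cofinality $\omega_1$ (as $\beta \in \Gamma \subseteq \Lambda$), so that $N \subseteq \beta$ indeed gives $\sup(N) < \beta$; but these are precisely the ingredients already invoked inside the proof of Lemma 3.1, so no additional argument is needed. One could alternatively bypass Lemma 3.6 and argue directly that $\beta_{M,N} \in \Gamma_{M \cap \beta} \cap \Gamma_N$ is maximal there, using $\sup((M \cap \beta) \cap \beta_{M,N}) = \sup(M \cap \beta_{M,N})$ together with $\Gamma_{M \cap \beta} \cap \Gamma_N \subseteq \Gamma_M \cap \Gamma_N$ (Lemma 2.3), but this simply re-proves Lemma 3.6 in place, so routing through Lemma 3.6 is cleaner.
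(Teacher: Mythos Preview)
Your proof is correct and follows exactly the same approach as the paper: apply Lemma 3.1 (with the roles of $M$ and $N$ swapped, using symmetry of $\beta_{M,N}$) to obtain $\beta_{M,N} \le \beta$, and then invoke Lemma 3.6. The paper's proof is in fact just these two sentences, so your additional remarks about why Lemma 3.1 applies and the alternative direct argument are extra but accurate.
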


\begin{proof}
By the previous lemma, it suffices to show that 
$\beta_{M,N} \le \beta$. 
This follows from Lemma 3.1.
\end{proof}

We are ready to handle amalgamation of adequate sets over countable 
elementary substructures.

\begin{definition}
Let $A$ be adequate and $N \in \mathcal X$. 
We say that $A$ is \emph{$N$-closed} if 
for all $M \in A$, if $M \cap \beta_{M,N} \in Sk(N)$, then 
$M \cap \beta_{M,N} \in A$.
\end{definition}

Note that if $A$ is adequate and $N \in \mathcal X$, 
then by Proposition 3.4, the set 
$$
A \cup \{ M \cap \beta_{M,N} : M \in A, \ 
M \cap \beta_{M,N} \in Sk(N) \}
$$
is adequate and $N$-closed.

Observe that a set $A$ is adequate iff for all $M$ and $N$ in $A$, 
$\{ M, N \}$ is adequate.

\begin{proposition}
Let $A$ be adequate, $N \in A$, and suppose that $A$ is $N$-closed. 
Let $B$ be adequate such that 
$$
A \cap Sk(N) \subseteq B \subseteq Sk(N).
$$
Then $A \cup B$ is adequate.
\end{proposition}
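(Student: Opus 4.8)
The plan is to verify that $\{K,L\}$ is adequate for every $K,L\in A\cup B$; by the observation preceding the proposition this is the same as $A\cup B$ being adequate. If $K,L$ both lie in $A$, or both lie in $B$, this holds by hypothesis, so the whole content is to show: whenever $M\in A$ and $P\in B$, the pair $\{M,P\}$ is adequate. Fix such $M$ and $P$. Since $P\in B\subseteq Sk(N)$ and $P$ is countable, $P\subseteq Sk(N)$, hence $P=P\cap\omega_2\subseteq Sk(N)\cap\omega_2=N$. Therefore $\beta_{M,P}\le\beta_{M,N}$ by Lemma 3.2. Set $M^*:=M\cap\beta_{M,N}$. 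Since $\beta_{M,N}\in\Gamma$ and $\beta_{M,P}\le\beta_{M,N}$, Lemma 3.6 gives $\beta_{M^*,P}=\beta_{M,P}$, and since $\beta_{M,P}\le\beta_{M,N}$ we have $M^*\cap\beta_{M,P}=M\cap\beta_{M,P}$. Note also that $\beta_{M,N}\in\Gamma\subseteq\Lambda\subseteq C$, so $M^*$ and $P\cap\beta_{M,N}$ are in $\mathcal{X}$ by Lemma 1.10.

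Now I split into cases according to how $M$ and $N$ compare, which is determined since $M,N\in A$ and $A$ is adequate. Suppose first that $M\cap\beta_{M,N}\in Sk(N)$, i.e.\ $M^*\in Sk(N)$. Then $N$-closedness of $A$ gives $M^*\in A$, so $M^*\in A\cap Sk(N)\subseteq B$. As $B$ is adequate, $\{M^*,P\}$ is adequate. Each of the three possibilities for $\{M^*,P\}$ transfers to the same statement for $\{M,P\}$: using $\beta_{M^*,P}=\beta_{M,P}$ and $M^*\cap\beta_{M,P}=M\cap\beta_{M,P}$, the alternatives $M^*\cap\beta_{M^*,P}=P\cap\beta_{M^*,P}$ and $M^*\cap\beta_{M^*,P}\in Sk(P)$ become the corresponding alternatives for $M$ directly, while $P\cap\beta_{M^*,P}\in Sk(M^*)$ gives $P\cap\beta_{M,P}\in Sk(M)$ since $M^*\subseteq M$ implies $Sk(M^*)\subseteq Sk(M)$. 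So $\{M,P\}$ is adequate.

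In the remaining cases, $M\cap\beta_{M,N}=N\cap\beta_{M,N}$ or $N\cap\beta_{M,N}\in Sk(M)$; in either case $N\cap\beta_{M,N}\subseteq M\subseteq Sk(M)$, and hence $Sk(N\cap\beta_{M,N})\subseteq Sk(M)$ since $Sk(M)$ is closed under the Skolem functions. Here the plan is to show $P\cap\beta_{M,N}\in Sk(M)$ and then restrict to $\beta_{M,P}$. Since $P\in Sk(N)$ and $P\cap\beta_{M,N}$ is an initial segment of $P$, Lemma 1.9 gives $P\cap\beta_{M,N}\in Sk(N)$; and $P\cap\beta_{M,N}\in Sk(\beta_{M,N})$ by Proposition 1.11, since $\beta_{M,N}\in\Lambda$. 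Because $\beta_{M,N}\in C$, Lemma 1.4 yields $Sk(N)\cap Sk(\beta_{M,N})=Sk(N\cap\beta_{M,N})$, so $P\cap\beta_{M,N}\in Sk(N\cap\beta_{M,N})\subseteq Sk(M)$. Finally, $P\cap\beta_{M,P}$ is an initial segment of $P\cap\beta_{M,N}$, which lies in $\mathcal{X}$, so Lemma 1.9 applied to $P\cap\beta_{M,N}\in Sk(M)$ gives $P\cap\beta_{M,P}\in Sk(M)$; thus $\{M,P\}$ is adequate.

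I expect the third paragraph to be the crux. The delicate point is that a model $P$ sitting inside $Sk(N)$ must meet $\beta_{M,N}$ within the common part $N\cap\beta_{M,N}$ of $M$ and $N$; this is extracted from the identity $Sk(N)\cap Sk(\beta_{M,N})=Sk(N\cap\beta_{M,N})$ together with the thinness-driven fact (Proposition 1.11) that initial segments of models land in $Sk(\beta_{M,N})$. Everything else is routine manipulation of the comparison-point inequalities from Section 3.
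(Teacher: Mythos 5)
Your proof is correct and follows essentially the same route as the paper's: you split on the three comparisons of $M$ and $N$, invoke $N$-closedness to pass to $M\cap\beta_{M,N}\in B$ in the case $M\cap\beta_{M,N}\in Sk(N)$, and in the other two cases use $Sk(N)\cap Sk(\beta_{M,N})=Sk(N\cap\beta_{M,N})$ together with Proposition 1.11 to land $P\cap\beta_{M,P}$ inside $Sk(M)$. The only cosmetic difference is that you absorb the paper's preliminary easy case $M\in Sk(N)$ into the general case analysis rather than dispatching it up front.
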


\begin{proof}
Since $A$ and $B$ are each adequate, it suffices to show that for all 
$M \in A$ and $L \in B$, the pair $\{ L, M \}$ is adequate. 
So let $M \in A$ and $L \in B$. 
As $B \subseteq Sk(N)$, we have that $L \in Sk(N)$.

In the easy case that $M \in Sk(N)$, 
we have that $M \in A \cap Sk(N) \subseteq B$. 
So $L$ and $M$ are both in $B$. 
As $B$ is adequate, we are done. 
Assume for the rest of the proof 
that $M \in A \setminus Sk(N)$.

Since $L \in Sk(N)$, it follows that 
(a) $\beta_{L,M} \le \beta_{M,N}$ by Lemma 3.2. 
So by Lemma 3.6, 
(b) $\beta_{L,M} = \beta_{L,M \cap \beta_{M,N}}$. 

As $M$ and $N$ are in $A$, the set $\{ M, N \}$ is adequate. 
We split the proof into three 
cases depending on the type of comparison which holds between 
$M$ and $N$.

\bigskip

(1) Assume that $M \cap \beta_{M,N} = N \cap \beta_{M,N}$. 
We will show that $L \cap \beta_{L,M} \in Sk(M)$. 
Since $L \in Sk(N)$, $L \cap \beta_{M,N} \in Sk(N)$, since $L \cap \beta_{M,N}$ 
is an initial segment of $L$. 
By Proposition 1.11, $L \cap \beta_{M,N} \in Sk(\beta_{M,N})$. 
So 
$$
L \cap \beta_{M,N} \in Sk(N) \cap Sk(\beta_{M,N}) = Sk(N \cap \beta_{M,N}).
$$
But since $M \cap \beta_{M,N} = N \cap \beta_{M,N}$, we have that 
$$
Sk(N \cap \beta_{M,N}) = Sk(M \cap \beta_{M,N}) \subseteq Sk(M).
$$
So $L \cap \beta_{M,N} \in Sk(M)$. 
Since $\beta_{L,M} \le \beta_{M,N}$ by (a) above, 
it follows that $L \cap \beta_{L,M} \in Sk(M)$.

\bigskip

(2) Assume that $N \cap \beta_{M,N} \in Sk(M)$. 
We will show that $L \cap \beta_{L,M} \in Sk(M)$. 
Since $L \in Sk(N)$, $L \cap \beta_{M,N} \in Sk(N)$, since $L \cap \beta_{M,N}$ is 
an initial segment of $L$. 
By Proposition 1.11, $L \cap \beta_{M,N} \in Sk(\beta_{M,N})$. 
So 
$$
L \cap \beta_{M,N} \in Sk(N) \cap Sk(\beta_{M,N}) = Sk(N \cap \beta_{M,N}).
$$
But since $N \cap \beta_{M,N} \in Sk(M)$, we have that 
$$
Sk(N \cap \beta_{M,N}) \subseteq Sk(M).
$$
Thus $L \cap \beta_{M,N} \in Sk(M)$. 
As $\beta_{L,M} \le \beta_{M,N}$ by (a) above,
$L \cap \beta_{L,M} \in Sk(M)$.

\bigskip

(3) Suppose that $M \cap \beta_{M,N} \in Sk(N)$. 
Since $A$ is $N$-closed, 
$M \cap \beta_{M,N} \in A$. 
So $M \cap \beta_{M,N} \in A \cap Sk(N) \subseteq B$. 
Hence $L$ and $M \cap \beta_{M,N}$ are both in $B$. 
As $B$ is adequate, it follows that $L$ and $M \cap \beta_{M,N}$ 
compare properly.

We claim that 
$$
(M \cap \beta_{M,N}) \cap \beta_{L,M \cap \beta_{M,N}} = M \cap \beta_{L,M}.
$$
As $\beta_{L,M} = \beta_{L,M \cap \beta_{M,N}}$ by (b) above, we have that 
$$
(M \cap \beta_{M,N}) \cap \beta_{L,M \cap \beta_{M,N}} = 
(M \cap \beta_{M,N}) \cap \beta_{L,M}.
$$
And since $\beta_{L,M} \le \beta_{M,N}$ by (a) above, 
$$
(M \cap \beta_{M,N}) \cap \beta_{L,M} = M \cap \beta_{L,M}.
$$
This proves the claim.

We consider the three possible comparisons of $L$ and $M \cap \beta_{M,N}$. 
First, suppose that 
$$
(M \cap \beta_{M,N}) \cap \beta_{L,M \cap \beta_{M,N}} \in Sk(L).
$$
Then by the claim, 
$$
M \cap \beta_{L,M} \in Sk(L),
$$
and we are done. 
Secondly, assume that 
$$
L \cap \beta_{L,M \cap \beta_{M,N}} \in Sk(M \cap \beta_{M,N}).
$$
Since $\beta_{L,M} = \beta_{L,M \cap \beta_{M,N}}$ by (b) above, 
it follows that 
$$
L \cap \beta_{L,M} \in Sk(M \cap \beta_{M,N}) \subseteq Sk(M),
$$
and hence $L \cap \beta_{L,M}\in Sk(M)$, which finishes the proof. 
Thirdly, if $$
L \cap \beta_{L,M \cap \beta_{M,N}} = (M \cap \beta_{M,N}) \cap 
\beta_{L,M \cap \beta_{M,N}},
$$
then by (b) and the claim, 
$$
L \cap \beta_{L,M} = M \cap \beta_{L,M}.
$$
\end{proof}

Next we handle amalgamation of adequate sets over elementary 
substructures of size $\omega_1$.

\begin{definition}
Let $A$ be adequate, and let $\beta \in \Gamma$. 
We say that $A$ is \emph{$\beta$-closed} if 
for all $M \in A$, $M \cap \beta \in A$.
\end{definition}

Note that if $A$ is adequate and $\beta \in \Gamma$, then by Proposition 3.4, 
the set 
$$
A \cup \{ M \cap \beta : M \in A \}
$$
is adequate and $\beta$-closed.

\begin{proposition}
Let $A$ be adequate, $\beta \in \Gamma$, and suppose 
that $A$ is $\beta$-closed. 
Let $B$ be adequate such that 
$$
A \cap P(\beta) \subseteq B \subseteq P(\beta).
$$
Then $A \cup B$ is adequate.
\end{proposition}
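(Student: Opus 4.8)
The plan is to mimic the structure of the proof of Proposition 3.9, but the argument here is considerably simpler. Since $A$ and $B$ are each adequate, it suffices to show that for every $M \in A$ and $L \in B$ the pair $\{L,M\}$ is adequate. Fix such $M$ and $L$. Since $B \subseteq P(\beta)$, we have $L \subseteq \beta$. In the easy case that $M \subseteq \beta$, we get $M \in A \cap P(\beta) \subseteq B$, so $L$ and $M$ are both in $B$ and we are done by the adequacy of $B$. So assume for the rest that $M \not\subseteq \beta$. Then, since $A$ is $\beta$-closed, $M \cap \beta \in A$; and since $M \cap \beta \subseteq \beta$, in fact $M \cap \beta \in A \cap P(\beta) \subseteq B$. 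Thus $L$ and $M \cap \beta$ are both in $B$, and so by the adequacy of $B$ they compare properly.

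Next I would record two facts relating the comparison point of $L$ with $M$ to that of $L$ with $M \cap \beta$. First, since $L \subseteq \beta$ and $\cf(\beta) = \omega_1$ (as $\beta \in \Gamma \subseteq \Lambda$), Lemma 3.1 applied to $L$ gives $\Gamma_L \subseteq \beta+1$, hence $\beta_{L,M} \le \beta$. Secondly, since $L \subseteq \beta$ and $\beta \in \Gamma$, Lemma 3.8 (with the roles of the two models as there, applied to $M$ and $L$) yields $\beta_{L,M} = \beta_{L,M \cap \beta}$. Also note $M \cap \beta \subseteq M \subseteq Sk(M)$, so $Sk(M \cap \beta) \subseteq Sk(M)$. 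Because $\beta_{L,M} = \beta_{L,M \cap \beta} \le \beta$, we have the identities $(M \cap \beta) \cap \beta_{L,M \cap \beta} = M \cap \beta_{L,M}$ and $L \cap \beta_{L,M \cap \beta} = L \cap \beta_{L,M}$.

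Finally I would push the comparison of $L$ and $M \cap \beta$ through to a comparison of $L$ and $M$, splitting into the three cases given by adequacy of $\{L, M\cap\beta\}$. If $(M \cap \beta) \cap \beta_{L,M \cap \beta} \in Sk(L)$, then by the first identity above $M \cap \beta_{L,M} \in Sk(L)$. If $L \cap \beta_{L,M \cap \beta} \in Sk(M \cap \beta)$, then using $Sk(M \cap \beta) \subseteq Sk(M)$ and $\beta_{L,M} = \beta_{L,M\cap\beta}$ we get $L \cap \beta_{L,M} \in Sk(M)$. If $L \cap \beta_{L,M \cap \beta} = (M \cap \beta) \cap \beta_{L,M \cap \beta}$, then the two identities above give $L \cap \beta_{L,M} = M \cap \beta_{L,M}$. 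In each case $\{L,M\}$ is adequate, which completes the proof.

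There is no real obstacle here beyond bookkeeping; the only point that needs a moment's care is the transfer step of the last paragraph, namely checking that because $\beta_{L,M} \le \beta$ the relevant initial segments of $M\cap\beta$ and of $L$ below $\beta_{L,M}$ coincide with the corresponding initial segments of $M$ and of $L$, so that a proper comparison of $L$ with $M\cap\beta$ immediately delivers a proper comparison of $L$ with $M$.
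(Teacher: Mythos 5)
Your proof is correct and follows essentially the same route as the paper's: reduce to a pair $M \in A$, $L \in B$, pass to $M \cap \beta \in B$ via $\beta$-closure, use Lemmas 3.1 and 3.7 to identify $\beta_{L,M}$ with $\beta_{L,M\cap\beta}$ and bound it by $\beta$, and then transfer each of the three comparisons of $L$ with $M\cap\beta$ to a comparison of $L$ with $M$. The only slip is the citation of ``Lemma 3.8'' (which in the paper is the definition of $N$-closed); the fact you need, $L \subseteq \beta \Rightarrow \beta_{L,M} = \beta_{L,M\cap\beta}$, is Lemma 3.7.
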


\begin{proof}
Consider $N \in A$ and $M \in B$, and we will show 
that $\{ M, N \}$ is adequate. 
If $N \subseteq \beta$, then $N \in A \cap P(\beta) \subseteq B$, so 
both $M$ and $N$ are in $B$. 
Since $B$ is adequate, so is $\{ M, N \}$, and we are done. 
Thus we will assume for the rest of the proof that $N \in A \setminus P(\beta)$.

Since $A$ is $\beta$-closed, 
$$
N \cap \beta \in A \cap P(\beta).
$$
As $A \cap P(\beta) \subseteq B$, $N \cap \beta \in B$. 
So both $M$ and $N \cap \beta$ are in $B$. 
Since $B$ is adequate, so is $\{ M, N \cap \beta \}$. 

Note that 
since $M \subseteq \beta$, we have that (a) 
$\beta_{M,N} = \beta_{M,N \cap \beta}$ 
by Lemma 3.7. 
By Lemma 3.1, $M \subseteq \beta$ implies that 
(b) $\beta_{M,N} \le \beta$. 

The rest of the proof will split into the three cases of how $M$ and $N \cap \beta$ 
compare.

\bigskip

(1) Suppose that 
$$
M \cap \beta_{M,N \cap \beta} \in Sk(N \cap \beta).
$$
Since $\beta_{M,N} = \beta_{M,N \cap \beta}$ by (a) above, it follows that 
$$
M \cap \beta_{M,N} \in Sk(N \cap \beta) \subseteq Sk(N).
$$
So $M \cap \beta_{M,N} \in Sk(N)$, and we are done. 

\bigskip

We make an additional observation to handle cases (2) and (3). 
Since $\beta_{M,N} = \beta_{M,N \cap \beta}$ by (a) above, and 
$\beta_{M,N} \le \beta$ by (b) above, we have that 
$$
(N \cap \beta) \cap \beta_{M,N \cap \beta} = 
(N \cap \beta) \cap \beta_{M,N} = N \cap \beta_{M,N}.
$$

\bigskip

(2) Suppose that 
$$
(N \cap \beta) \cap \beta_{M,N \cap \beta} = 
M \cap \beta_{M,N \cap \beta}.
$$
It follows that 
$$
N \cap \beta_{M,N} = 
(N \cap \beta) \cap \beta_{M,N \cap \beta} = M \cap \beta_{M,N \cap \beta} = 
M \cap \beta_{M,N},
$$
where the last equality holds by (a). 

\bigskip

(3) Suppose that 
$$
(N \cap \beta) \cap \beta_{M,N \cap \beta} \in Sk(M).
$$
Since $(N \cap \beta) \cap \beta_{M,N \cap \beta} = N \cap \beta_{M,N}$, 
we have that 
$$
N \cap \beta_{M,N} \in Sk(M).
$$
\end{proof}

\section{Forcing with Adequate Sets of Models}

We now present a simple example to illustrate how the results from the 
last section can be used to preserve cardinals in forcing with adequate 
sets of models as side conditions.

Recall the following definitions of Mitchell \cite{mitchell2}. 
Let $\q$ be a forcing poset, $q \in \q$, and $N$ a set. 
We say that $q$ is a \emph{strongly $(N,\q)$-generic condition} 
if for any set $D$ which is a dense subset of the forcing poset $N \cap \q$, 
$D$ is predense in $\q$ below $q$. 
The forcing poset $\q$ is said to be \emph{strongly proper on a stationary set} 
if for any sufficiently large regular cardinal $\theta$ with $\q \subseteq H(\theta)$, 
there are stationarily many countable $N \prec H(\theta)$ such that for 
every condition $p \in N \cap \q$, there is an extension $q \le p$ which is 
strongly $(N,\q)$-generic.

Standard proper forcing arguments show that if $\q$ is strongly proper on 
a stationary set, then $\q$ preserves $\omega_1$. 
More generally, let $\kappa$ be a regular uncountable cardinal. 
Assume that for any sufficiently large regular cardinal $\lambda \ge \kappa$ 
with $\q \subseteq H(\lambda)$, there are stationarily many 
$N$ in $P_{\kappa}(H(\lambda))$ such that $N \cap \kappa \in \kappa$ and 
every condition in $N \cap \q$ has a strongly $(N,\q)$-generic extension. 
Then $\q$ preserves the cardinal $\kappa$.

\begin{definition}
Let $\p$ be the forcing poset whose conditions are finite adequate sets. 
Let $B \le A$ if $A \subseteq B$.
\end{definition}

\begin{proposition}
The forcing poset $\p$ is strongly proper on a stationary set. 
In particular, $\p$ preserves $\omega_1$.
\end{proposition}

\begin{proof}
Fix $\theta > \omega_2$ regular. 
Let $N^*$ be a countable elementary substructure of $H(\theta)$ satisfying 
that $\p, \pi, \mathcal X \in N^*$ 
and $N := N^* \cap \omega_2 \in \mathcal X$. 
Note that since $\mathcal X$ is stationary, there are stationarily many such $N^*$ in 
$P_{\omega_1}(H(\theta))$. 
So to prove the proposition, it suffices to show that 
every condition in 
$N^* \cap \p$ has a strongly $(N^*,\p)$-generic extension.

Observe that since $\pi \in N^*$ and $\pi : \omega_2 \to H(\omega_2)$ 
is a bijection, by elementarity we have that 
$$
N^* \cap H(\omega_2) = \pi[N^* \cap \omega_2] = \pi[N] = Sk(N),
$$
where the last equality holds by Lemma 1.3 and the fact that 
$N \in \mathcal X$ implies that $Sk(N) \cap \omega_2 = N$. 
In particular, $N^* \cap \p \subseteq Sk(N)$.

Let $A \in N^* \cap \p$, and we will find an extension of $A$ 
which is strongly $(N^*,\p)$-generic. 
Define 
$$
B := A \cup \{ N \}.
$$
By Lemma 3.5, $B$ is adequate. 
So $B \in \p$, and clearly $B \le A$. 
We will show that $B$ is strongly $(N^*,\p)$-generic, which finishes the proof. 
Fix a set $E$ which is a dense subset of $N^* \cap \p$, and we will 
show that $E$ is predense below $B$.

Let $C \le B$. 
We will find a condition in $E$ which is compatible with $C$. 
To prepare for intersecting with $N^*$, we will first extend $C$. 
Define 
$$
D := C \cup \{ M \cap \beta_{M,N} : M \in C, \ 
M \cap \beta_{M,N} \in Sk(N) \}.
$$
Then $D$ is finite, adequate, and $N$-closed. 
Since $D \le C$, it suffices to find a condition in $E$ which is compatible with $D$.

Define $X := D \cap N^*$. 
Then $X$ is in $\p$. 
Since $X$ is a finite subset of $N^*$, $X \in N^*$. 
Also note that since $N^* \cap \p \subseteq Sk(N)$, 
$X = D \cap Sk(N)$. 

As $E$ is dense in $N^* \cap \p$, we can fix 
$Y \le X$ in $E$. 
Now $E \subseteq N^* \cap \p \subseteq Sk(N)$. 
So $Y \in Sk(N)$. 
Since $Y \in E$, we will be finished if we can show that $Y$ is compatible with $D$.

We apply Proposition 3.9. 
We have that $D$ is adequate, $N \in D$, and $D$ is $N$-closed. 
Moreover, $Y$ is adequate, and 
$$
D \cap Sk(N) = X \subseteq Y \subseteq Sk(N).
$$
By Proposition 3.9, it follows that $D \cup Y$ is adequate. 
Hence $D \cup Y$ is a condition below 
$D$ and $Y$, showing that $D$ and $Y$ are compatible.
\end{proof}

The preservation of $\omega_2$ involves amalgamating conditions 
over a model of size $\omega_1$. 
This argument sometimes shows that the forcing poset under consideration 
is $\omega_2$-c.c., using the next lemma.

\begin{lemma}
Let $\q$ be a forcing poset. 
Fix $\theta > \omega_2$ with $\q \in H(\theta)$. 
Suppose that there 
exists $N^* \prec H(\theta)$ of size at most $\omega_1$ with $\q \in N^*$ 
such that the empty 
condition is strongly $(N^*,\q)$-generic.\footnote{It actually suffices that the empty 
condition is $(N^*,\q)$-generic, in the sense of proper forcing, which is a weaker 
assumption. But the lemma is 
stated in the form which we will use.} 
Then $\q$ is $\omega_2$-c.c.
\end{lemma}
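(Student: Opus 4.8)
The plan is to argue by contradiction, using the elementarity of $N^*$ to reflect a hypothetical antichain of size $\omega_2$ down past the ordinal $\delta := N^* \cap \omega_2$, and then to use the genericity of the empty condition to produce two compatible members of that antichain.

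First I would suppose that $\q$ is not $\omega_2$-c.c. Since $\q \in N^* \prec H(\theta)$ and the failure of the $\omega_2$-chain condition is a first order property of $\q$ over $H(\theta)$ (with $\omega_2$ computed correctly there), by full elementarity there is an antichain $A \subseteq \q$ with $A \in N^*$ and $|A| = \omega_2$; fix also an injective enumeration $\langle q_\xi : \xi < \omega_2 \rangle \in N^*$ of $A$. Let $\delta := N^* \cap \omega_2$, which is an ordinal below $\omega_2$ by hypothesis. Since the enumeration lies in $N^*$, we have $q_\xi \in N^*$ for every $\xi < \delta$, whereas $q_\delta \notin N^*$ because the enumeration is injective and $\delta \notin N^*$.

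Next I would isolate a dense subset of $N^* \cap \q$. Working inside $N^*$, the set $D^* := \{ s \in \q : (\exists a \in A)(s \le a) \ \text{or} \ (\forall a \in A)(s \perp a) \}$ is dense in $\q$ and belongs to $N^*$, so $N^* \models$ ``$D^*$ is dense in $\q$''. Reflecting this into $N^*$ shows that $D := \{ s \in N^* \cap \q : s \perp q_\delta \ \text{or} \ (\exists \zeta < \delta)(s \le q_\zeta) \}$ is dense in the poset $N^* \cap \q$: given $t \in N^* \cap \q$, pick $s \le t$ in $N^* \cap \q$ with $N^* \models s \in D^*$; if $s \le a$ for some $a \in A$, then by elementarity $s \le q_\zeta$ for some $\zeta \in N^* \cap \omega_2 = \delta$, while if $s \perp a$ for all $a \in A$ then in particular $s \perp q_\delta$, since $q_\delta \in A$.

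Finally, since the empty condition is strongly $(N^*,\q)$-generic and $D$ is dense in $N^* \cap \q$, the set $D$ is predense in $\q$. Hence $q_\delta$ is compatible with some $s \in D$; compatibility rules out the case $s \perp q_\delta$, so $s \le q_\zeta$ for some $\zeta < \delta$. Choosing $t \le s$ with $t \le q_\delta$, transitivity gives $t \le q_\zeta$ and $t \le q_\delta$, so $q_\zeta$ and $q_\delta$ are compatible; but $\zeta < \delta$, so $q_\zeta \ne q_\delta$, contradicting that $A$ is an antichain. (The argument goes through verbatim from the weaker hypothesis in the footnote, applying $(N^*,\q)$-genericity of the empty condition directly to $D^* \in N^*$ and working with $D^* \cap N^*$ in place of $D$.) I expect the only real delicacy to lie in the elementarity bookkeeping of the first two steps — verifying that ``antichain of size $\omega_2$'' transfers correctly between $N^*$ and $H(\theta)$, and that indices below $\delta$ land inside $N^*$ while $q_\delta$ does not; nothing else in the hypothesis, in particular not the size of $N^*$, is used beyond $N^* \cap \omega_2$ being an ordinal.
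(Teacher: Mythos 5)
Your proof is correct and follows essentially the same route as the paper: reflect an antichain of size $\omega_2$ into $N^*$, locate a member of the antichain outside $N^*$, and apply strong genericity of the empty condition to a dense set derived from the antichain to produce two compatible distinct elements of the antichain. One small refinement worth noting: your $D^*$ includes the disjunct ``$s$ is incompatible with every member of $A$'', which makes $D^*$ dense in $\q$ unconditionally, whereas the paper's dense set (conditions lying below some member of $A$) is dense only when $A$ is predense, so the paper implicitly takes $A$ maximal; your formulation sidesteps this, and your use of the injective enumeration and $N^* \cap \omega_2 = \delta$ to exhibit $q_\delta \in A \setminus N^*$ is a tidy alternative to the paper's appeal to $|N^*| = \omega_1 < |A|$.
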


\begin{proof}
Suppose for a contradiction that $\q$ is not $\omega_2$-c.c. 
By elementarity, we can fix an antichain $A$ of $\q$ in $N^*$ 
such that $|A| \ge \omega_2$. 
Since $N^*$ has size at most $\omega_1$ and $A$ has size greater than $\omega_1$, 
we can fix a condition $q$ which is in $A \setminus N^*$.

Let $D$ be the dense set of conditions which are below some condition in $A$. 
Then $D \in N^*$ by elementarity. 
Again by elementarity, $N^* \cap D$ is a dense subset of the forcing 
poset $\q \cap N^*$.

Since the empty condition is strongly $(N^*,\q)$-generic, $N^* \cap D$ 
is predense in the forcing poset $\q$. 
In particular, we can find $w \in N^* \cap D$ which is compatible 
with the condition $q$. 
By the definition of $D$, there is some $u \in A$ such that $w \le u$, and since 
$w \in N^*$, by elementarity there is such a $u$ in $N^*$. 
Since $w$ is compatible with $q$, and $w \le u$, it follows that 
$u$ and $q$ are compatible. 
But $u \in N^* \cap A$ and $q \in A \setminus N^*$, hence $u \ne q$. 
So $q$ and $u$ are distinct conditions in $A$ which are compatible, contradicting 
the fact that $A$ is an antichain.
\end{proof}

We use Proposition 3.11 to prove that $\p$ preserves $\omega_2$.

\begin{proposition}
The forcing poset $\p$ is $\omega_2$-c.c.
\end{proposition}

\begin{proof}
Let $\theta > \omega_2$ be regular such that $\p \in H(\theta)$. 
Fix $N^* \prec H(\theta)$ of size $\omega_1$ such that 
$\p, \pi, \mathcal X \in N^*$ and 
$\beta^* := N^* \cap \omega_2 \in \Gamma$. 
Note that this is possible since $\Gamma$ is stationary. 
Since $\pi \in N^*$ and $\pi : \omega_2 \to H(\omega_2)$ is a bijection, 
by elementarity we have that 
$$
N^* \cap H(\omega_2) = \pi[N^* \cap \omega_2] = \pi[\beta^*] = Sk(\beta^*),
$$
where the last equality holds by Lemma 1.3 and the fact that 
$\beta^* \in \Gamma$ implies that $Sk(\beta^*) \cap \omega_2 = \beta^*$. 
In particular, $N^* \cap \p \subseteq Sk(\beta^*)$.

We will prove that the empty condition is strongly $(N^*,\p)$-generic. 
By Lemma 4.3, this implies that $\p$ is $\omega_2$-c.c., which finishes the proof. 
So fix $E$ which is a dense subset of $N^* \cap \p$, and we will show that 
$E$ is predense in $\p$.
 
Let $B \in \p$ be given. 
We will find a condition in $E$ which is compatible with $B$. 
First we extend $B$ to prepare for intersecting with $N^*$. 
Define 
$$
C := B \cup \{ M \cap \beta^* : M \in B \}.
$$
Then $C$ is finite, adequate, and $\beta^*$-closed.
Since $C \le B$, it suffices to find a condition in $E$ which is compatible with $C$.

We claim that 
$$
N^* \cap C = C \cap P(\beta^*).
$$
On the one hand, $N^* \cap C \subseteq C \cap P(\beta^*)$ since 
$N^* \cap \omega_2 = \beta^*$. 
Conversely, by Proposition 1.11, 
$$
C \cap P(\beta^*) \subseteq \mathcal X \cap P(\beta^*) \subseteq 
Sk(\beta^*) \subseteq N^*,
$$
so $C \cap P(\beta^*) \subseteq N^* \cap C$. 

Let $X := N^* \cap C$. 
Then $X$ is a finite subset of $N^*$, and so is in $N^*$.
Also $X \in \p$. 
Since $E$ is a dense subset of $N^* \cap \p$, we can fix $Y \le X$ in $E$. 
Since 
$$
Y \in E \subseteq N^* \cap \p \subseteq Sk(\beta^*),
$$
we have that $Y \in Sk(\beta^*)$. 
We will prove that $Y$ is compatible with $C$, which completes the proof.

We apply Proposition 3.11. 
We have that $C$ is adequate, $\beta^* \in \Gamma$, and $C$ is 
$\beta^*$-closed. 
Also, $Y$ is adequate, and 
$$
C \cap P(\beta^*) = N^* \cap C = X \subseteq Y 
\subseteq P(\beta^*).
$$
By Proposition 3.11, $Y \cup C$ is adequate. 
So $Y \cup C$ is in $\p$ and is below $Y$ and $C$, which proves that 
$Y$ and $C$ are compatible.
\end{proof}

Note that $\p$ has size $\omega_2$, and so preserves cardinals larger 
than $\omega_2$ as well.

\section{Adding a Function}

In this section we define a forcing poset for adding a generic 
function from $\omega_2$ to $\omega_2$ using adequate sets of 
models as side conditions.

We assume for the remainder of this section that $\Gamma = \Lambda$. 
It follows from Proposition 2.12 that if $\{ M, N \}$ is adequate, then 
$R_M(N) \subseteq \Gamma$.

\begin{definition}
Let $\p$ be the forcing poset whose conditions are pairs $(f,A)$ satisfying:
\begin{enumerate}
\item $f$ is a finite partial function from $\omega_2$ to $\omega_2$;
\item $A$ is a finite adequate set;
\item for all $M \in A$ and $\alpha \in \dom(f)$, 
if $M \cap [\alpha,f(\alpha)] \ne \emptyset$, then 
$\alpha, f(\alpha) \in M$.\footnote{For ordinals $\alpha$ and $\beta$, 
if we let $\alpha'$ be the smaller and $\alpha''$ 
the larger of $\alpha$ and $\beta$, then $[\alpha,\beta]$ denotes the closed 
interval $[\alpha',\alpha'']$.}
\end{enumerate}
Let $(g,B) \le (f,A)$ if $A \subseteq B$ and $f \subseteq g$.
\end{definition}

If $p = (f,A)$, we will write $f_p := f$ and $A_p := A$. 
It is easy to see that 
if $(f,A)$ is a condition, $f' \subseteq f$, and $A' \subseteq A$, then 
$(f',A')$ is a condition.

Let $\dot F$ be a $\p$-name for the set 
$$
\bigcup \{ f : \exists p \in \dot G \ f = f_p \}.
$$
Note that for any ordinal $\alpha < \omega_2$ and any condition 
$(f,A)$, we can extend $(f,A)$ to a condition $(g,B)$ which includes $\alpha$ 
in the domain of $g$. 
For example, let $g := f \cup \{ \langle \alpha,\alpha \rangle \}$ 
and $B := A$. 
Consequently, $\p$ forces that 
$\dot F$ is a total function from $\omega_2$ to $\omega_2$.

\bigskip

We will show that $\p$ preserves $\omega_1$ and $\omega_2$. 
Note that since $\p$ has size $\omega_2$, it preserves all cardinals 
larger than $\omega_2$ as well.

\begin{proposition}
The forcing poset $\p$ is strongly proper on a stationary set. 
In particular, $\p$ preserves $\omega_1$.
\end{proposition}

\begin{proof}
Fix $\theta > \omega_2$ regular. 
Let $N^*$ be a countable elementary substructure of $H(\theta)$ satisfying 
that $\p, \pi, \mathcal X \in N^*$ 
and $N := N^* \cap \omega_2 \in \mathcal X$. 
Note that since $\mathcal X$ is stationary, 
there are stationarily many such $N^*$ in 
$P_{\omega_1}(H(\theta))$. 
To prove the proposition, it suffices to show that 
every condition in $N^* \cap \p$ has a strongly $(N^*,\p)$-generic extension.

Observe that since $\pi \in N^*$ and $\pi : \omega_2 \to H(\omega_2)$ 
is a bijection, by elementarity we have that 
$$
N^* \cap H(\omega_2) = \pi[N^* \cap \omega_2] = \pi[N] = Sk(N),
$$
where the last equality holds by Lemma 1.3 
and the fact that 
$N \in \mathcal X$ implies that $Sk(N) \cap \omega_2 = N$. 
In particular, $N^* \cap \p \subseteq Sk(N)$.

Fix $p \in N^* \cap \p$. 
Then as just noted, $p \in Sk(N)$. 
Define 
$$
q := (f_p,A_p \cup \{ N \}).
$$
It is trivial to see that $q$ is a condition, using Proposition 3.5, 
and clearly $q \le p$. 
We will prove that $q$ is strongly $(N^*,\p)$-generic, which finishes the proof. 
So fix a set $D$ which is a dense subset of 
$N^* \cap \p$, and we will show that $D$ is 
predense below $q$.

\bigskip

Let $r \le q$ be given. 
Our goal is to find a condition in $D$ which is compatible with $r$. 
First let us extend $r$ to prepare for intersecting with the model $N^*$. 
Define $s$ so that $f_s := f_r$ and 
$$
A_s := A_r \cup \{ M \cap \beta_{M,N} : M \in A_r, \ 
M \cap \beta_{M,N} \in Sk(N) \}.
$$
We claim that $s$ is a condition. 
Requirement (1) in the definition of $\p$ is trivial. 
For (2), $A_s$ is adequate by Proposition 3.4. 

(3) Consider a model in $A_s \setminus A_r$ and $\alpha \in \dom(f_r)$. 
Then by definition this model has the form $M \cap \beta_{M,N}$, where 
$M \in A_r$ and $M \cap \beta_{M,N} \in Sk(N)$. 
Assume that 
$$
(M \cap \beta_{M,N}) \cap [\alpha,f_r(\alpha)] \ne \emptyset.
$$
We will show that $\alpha$ and $f_r(\alpha)$ are in $M \cap \beta_{M,N}$. 
Let $\alpha'$ be the smaller and $\alpha''$ the larger of 
$\alpha$ and $f_r(\alpha)$. 
Since $M \cap \beta_{M,N}$ meets the interval $[\alpha',\alpha'']$, 
clearly $\alpha' < \beta_{M,N}$. 

Since $M \cap \beta_{M,N}$ intersects the interval 
$[\alpha,f_r(\alpha)]$, obviously $M$ does as well. 
As $r$ is a condition, it follows that $\alpha'$ and $\alpha''$ are in $M$. 
But we observed above that $\alpha' < \beta_{M,N}$. 
Hence $\alpha' \in M \cap \beta_{M,N}$.

To show that $\alpha'' \in M \cap \beta_{M,N}$, it suffices 
to show that $\alpha'' < \beta_{M,N}$. 
Since $M \cap \beta_{M,N} \in Sk(N)$, 
it follows that $\alpha' \in N$. 
Therefore $N \cap [\alpha,f_r(\alpha)] \ne \emptyset$. 
Since $N \in A_r$ and $r$ is a condition, we have that $\alpha'' \in N$. 
Therefore $\alpha'' \in M \cap N \subseteq \beta_{M,N}$, so 
$\alpha'' < \beta_{M,N}$. 
This completes the proof of (3), and with it the proof that $s$ is a condition.

We will show that there is a condition in $D$ which is compatible with $s$. 
Since $s \le r$, this implies that there is a condition in $D$ which is 
compatible with $r$, which finishes the proof.

\bigskip

Define $u$ by 
$$
u := (f_s \cap Sk(N),A_s \cap Sk(N)).
$$
Note that $u \in N^* \cap \p \subseteq Sk(N)$. 
Define 
$$
R(N) := \bigcup \{ R_M(N) : M \in A_s \}.
$$
Then $R(N)$ is a finite subset of $N$, and therefore is in $N^*$. 
So we have that $N \in \mathcal X$, $u \in Sk(N)$, and $R(N) \subseteq N$. 
Since $\mathcal X \in N^*$, by the elementarity of $N^*$ 
we can fix $K \in N^*$ satisfying that $K \in \mathcal X$, 
$u \in Sk(K)$, and $R(N) \subseteq K$.

\bigskip

Define $v$ by letting $f_v := f_u$, and 
$$
A_v := A_u \cup \{ K \} \cup \{ K \cap \zeta : \zeta \in R(N) \}.
$$
Note that $v$ is in $N^*$. 
We claim that $v$ is a condition. 
Requirement (1) in the definition of $\p$ is trivial. 
For (2), since $u \in Sk(K)$, $A_u \subseteq Sk(K)$; 
so the set $A_v$ is adequate by Lemmas 3.4 and 3.5.

It remains to prove requirement (3) in the definition of $\p$. 
The proof will take some time. 
Let $\alpha \in \dom(f_v)$. 
Recall that $f_v = f_u = f_s \cap Sk(N)$. 
We need to show that any model in $A_v$ which meets the interval 
$[\alpha,f_v(\alpha)]$ contains $\alpha$ and $f_v(\alpha)$. 
Since $f_v = f_u$ and $u$ is a condition, clearly this 
requirement is satisfied for models in $A_u$. 
So it suffices to show that the requirement is satisfied by 
$K$ and $K \cap \zeta$, for 
all $\zeta \in R(N)$.

Since $u$ is in $Sk(K)$, so is $f_u = f_v$. 
Hence $\alpha$ and $f_v(\alpha) = f_u(\alpha)$ are in $K$. 
So $K$ satisfies the requirement.

Consider a model $K \cap \zeta$, where $\zeta \in R(N)$. 
By the definition of $R(N)$, fix $M \in A_s$ such that $\zeta \in R_M(N)$. 
Suppose that 
$$
(K \cap \zeta) \cap [\alpha,f_v(\alpha)] \ne \emptyset.
$$
We will show that $\alpha$ and $f_v(\alpha)$ are in $K \cap \zeta$. 
Since $\zeta \in R_M(N)$, by the definition of remainder points, 
$$
\beta_{M,N} \le \zeta.
$$
Let $\alpha'$ be the smaller and $\alpha''$ the larger of 
$\alpha$ and $f_v(\alpha)$. 
Then clearly $\alpha' < \zeta$, so $\alpha' \in K \cap \zeta$. 
Since $\alpha'' \in K$ as observed above, we will be done if we can show 
that $\alpha'' < \zeta$.

Suppose for a contradiction that $\zeta \le \alpha''$. 
Then we have that 
$$
\alpha' < \zeta \le \alpha''.
$$
Since $\beta_{M,N} \le \zeta$, it follows that 
$$
\beta_{M,N} \le \alpha''.
$$

We claim that 
$$
M \cap [\alpha',\alpha''] = \emptyset.
$$
If $M \cap [\alpha',\alpha''] \ne \emptyset$, then since $f_v(\alpha) = f_s(\alpha)$, 
$s$ is a condition, and $M \in A_s$, it follows that 
$\alpha'' \in M$. 
But this is impossible, since then we would have that 
$$
\alpha'' \in M \cap N \subseteq \beta_{M,N} \le \zeta,
$$
which contradicts our assumption that $\zeta \le \alpha''$.

We will get a contradiction to our assumption that $\zeta \le \alpha''$ 
by separately considering the two cases that 
$\beta_{M,N} \le \alpha'$ and $\alpha' < \beta_{M,N}$.

First, assume that $\beta_{M,N} \le \alpha'$. 
Recall that $\zeta \in R_M(N)$. 
Since the ordinals $\alpha' < \zeta$ are in $N$, it obviously cannot be 
the case that $\zeta = \min(N \setminus \beta_{M,N})$. 
So by the definition of remainder points, 
there is $\gamma \ge \beta_{M,N}$ in $M$ 
such that $\zeta = \min(N \setminus \gamma)$. 
Since $\alpha' \in N$, 
it must be the case that $\alpha' < \gamma < \zeta$. 
Hence $M$ meets the interval $[\alpha',\alpha'']$, which contradicts the claim 
above that $M \cap [\alpha',\alpha''] = \emptyset$.

In the second case, assume that $\alpha' < \beta_{M,N}$. 
Then $\alpha' \in (N \cap \beta_{M,N}) \setminus M$, which implies that 
$M \cap \beta_{M,N} \in Sk(N)$, since the other two kinds of comparisons of 
$M$ and $N$ would imply that $\alpha' \in M$. 
By the definition of $R_M(N)$, there is $\gamma \ge \beta_{M,N}$ 
in $M$ such that $\zeta = \min(N \setminus \gamma)$. 
Since $\beta_{M,N} > \alpha'$, this implies that $\gamma$ is in the interval 
$[\alpha',\alpha'']$, which again contradicts that 
$M \cap [\alpha',\alpha''] = \emptyset$. 
This contradiction shows that $\alpha'' < \zeta$, which completes the 
proof that $v$ is a condition. 

\bigskip

Since $D$ is dense in $N^* \cap \p$ and $v \in N^* \cap \p$, 
we can fix $w \le v$ in $D$. 
We will show that $w$ and $s$ are compatible, which finishes the proof. 
Since $D \subseteq \p \cap N^* \subseteq Sk(N)$, we have that 
$w \in \p \cap Sk(N)$. 
Define 
$$
z := (f_w \cup f_s,A_w \cup A_s).
$$
We claim that $z$ is a condition. 
Then clearly $z \le w, s$ and we are done. 
We check requirements (1), (2), and (3) in the definition of $\p$.

\bigskip

(1) We show that $f_w \cup f_s$ is a function. 
Let $\alpha \in \dom(f_w) \cap \dom(f_s)$, and we will prove that 
$f_w(\alpha) = f_s(\alpha)$. 
Since $\alpha \in \dom(f_w)$ and $w \in N$, it follows that $\alpha \in N$. 
Hence $N \cap [\alpha,f_s(\alpha)] \ne \emptyset$, which implies that 
$\alpha, f_s(\alpha) \in N$, since $s$ is a condition. 
So the ordered pair $\langle \alpha,f_s(\alpha) \rangle$ is in 
$N^* \cap f_s$. 
But 
$$
N^* \cap f_s = Sk(N) \cap f_s = f_u = f_v \subseteq f_w.
$$
Therefore $f_w(\alpha) = f_s(\alpha)$.

\bigskip

(2) Since $A_s$ is $N$-closed, the set 
$A_z$ is adequate by Proposition 3.9.

\bigskip

(3) Let $M \in A_z$ and $\alpha \in \dom(f_z)$, and suppose that 
$M \cap [\alpha,f_z(\alpha)] \ne \emptyset$. 
We will show that $\alpha$ and $f_z(\alpha)$ are in $M$. 
Since $w$ and $s$ are conditions, it suffices to consider the cases that 
(A) $M \in A_w$ and $\alpha \in \dom(f_s)$, or 
(B) $M \in A_s$ and $\alpha \in \dom(f_w)$. 

\bigskip

(A) $M \in A_w$ and $\alpha \in \dom(f_s)$. 
As $w \in Sk(N)$, also $M \in Sk(N)$. 
So $M \subseteq N$. 
Since $M$ meets the interval $[\alpha,f_s(\alpha)]$ and $M \subseteq N$, 
also $N$ meets the interval $[\alpha,f_s(\alpha)]$. 
Since $s$ is a condition, it follows that $\alpha$ and $f_s(\alpha)$ are in $N$. 
Hence the pair $\langle \alpha, f_s(\alpha) \rangle$ is in $f_s \cap Sk(N)$. 
But 
$$
f_s \cap Sk(N) \subseteq f_u = f_v \subseteq f_w.
$$
So $f_s(\alpha) = f_w(\alpha)$. 
Since $w$ is a condition and $M \in A_w$, it follows that 
$\alpha, f_s(\alpha) \in M$.

\bigskip

(B) $M \in A_s$ and $\alpha \in \dom(f_w)$. 
Then $\alpha$ and $f_w(\alpha)$ are in $N$. 
Let $\alpha'$ be the smaller and $\alpha''$ the larger of 
$\alpha$ and $f_w(\alpha)$.

Suppose that there is $\gamma \in M \cap [\alpha,f_w(\alpha)]$ such that 
$\gamma \ge \beta_{M,N}$. 
We will get a contradiction from this assumption. 
Since $\alpha' \le \gamma$, $\alpha' \in N$, $\gamma \in M$, and 
$\gamma \ge \beta_{M,N}$, it follows that $\alpha' < \gamma$ 
by Proposition 2.6. 
Let $\zeta = \min(N \setminus \gamma)$. 
Then $\zeta \in R_M(N)$ and $\zeta \in (\alpha',\alpha'']$. 
Since $R(N) \subseteq K$, we have that $\zeta \in K$. 
Therefore 
$$
K \cap [\alpha,f_w(\alpha)] \ne \emptyset.
$$
Since $K \in A_w$ and $w$ is a condition, it follows that 
$\alpha'$ and $\alpha''$ are in $K$. 
But now $\alpha' < \zeta$, so $\alpha' \in K \cap \zeta$. 
Hence 
$$
(K \cap \zeta) \cap [\alpha,f_w(\alpha)] \ne \emptyset.
$$
Since $K \cap \zeta \in A_w$ and $w$ is a condition, it follows that 
$\alpha'' \in K \cap \zeta$, and in particular, $\alpha'' < \zeta$. 
But this is impossible since $\zeta \le \alpha''$.

It follows that the nonempty intersection 
$M \cap [\alpha,f_w(\alpha)]$ is a subset of $\beta_{M,N}$. 
So clearly 
$$
(M \cap \beta_{M,N}) \cap [\alpha,f_w(\alpha)] \ne \emptyset.
$$
Note that this also implies that $\alpha' < \beta_{M,N}$. 

If $M \cap \beta_{M,N} \in Sk(N)$, then 
$$
M \cap \beta_{M,N} \in A_s \cap Sk(N) = A_u \subseteq A_v \subseteq A_w,
$$
so $M \cap \beta_{M,N} \in A_w$. 
Since $w$ is a condition, 
$\alpha$ and $f_w(\alpha)$ are in 
$M \cap \beta_{M,N}$, and hence in $M$. 
So in this case we are done. 

Otherwise $N \cap \beta_{M,N}$ is either equal to 
$M \cap \beta_{M,N}$ or in $Sk(M)$. 
In either case, $N \cap \beta_{M,N} \subseteq M$. 
If $\alpha'' < \beta_{M,N}$, then 
$\alpha$ and $f_w(\alpha)$ are both in 
$N \cap \beta_{M,N}$, and hence in $M$, and we are done. 
So assume that $\alpha' < \beta_{M,N} \le \alpha''$, and we will get 
a contradiction.

Let $\zeta = \min(N \setminus \beta_{M,N})$. 
Then $\zeta \in R_M(N)$, and $\alpha' < \zeta \le \alpha''$. 
Since $R(N) \subseteq K$, it follows that 
$\zeta \in K$, and hence $K$ meets the interval 
$[\alpha,f_w(\alpha)]$. 
Since $w$ is a condition, it follows that $\alpha' \in K$. 
So $\alpha' \in K \cap \zeta$, which implies that 
$K \cap \zeta$ meets the interval $[\alpha,f_w(\alpha)]$. 
Since $w$ is a condition and $K \cap \zeta \in A_w$, it follows that 
$\alpha'' \in K \cap \zeta$. 
In particular, $\alpha'' < \zeta$. 
But this contradicts that $\zeta \le \alpha''$.
\end{proof}

\begin{proposition}
The forcing poset $\p$ preserves $\omega_2$.
\end{proposition}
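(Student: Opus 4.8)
The plan is to run the argument of Proposition 5.2 with an elementary substructure of size $\omega_1$ in place of a countable one, amalgamating via Proposition 3.11 rather than Proposition 3.9, and to conclude through the criterion for cardinal preservation stated before Definition 4.1 applied with $\kappa = \omega_2$. So fix a regular $\theta > \omega_2$ with $\p \in H(\theta)$ and fix $N^* \prec H(\theta)$ of size $\omega_1$ with $\omega_1 \subseteq N^*$, $\p, \pi, \mathcal X \in N^*$, and $\beta^* := N^* \cap \omega_2 \in \Gamma$; since $\Gamma$ is stationary there are stationarily many such $N^*$ in $P_{\omega_2}(H(\theta))$, and $N^* \cap \omega_2 = \beta^* \in \omega_2$. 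As in the proof of Proposition 4.4, $N^* \cap H(\omega_2) = \pi[\beta^*] = Sk(\beta^*)$, so every $p \in N^* \cap \p$ has $\dom f_p \cup \ran f_p \subseteq \beta^*$ and $A_p \subseteq Sk(\beta^*)$; it therefore suffices to show that every $p \in N^* \cap \p$ has a strongly $(N^*,\p)$-generic extension.

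Given $p \in N^* \cap \p$, I would first try $q := p$ as the generic extension, falling back if necessary to $p$ enlarged by a countable model of $\mathcal X$ containing $\dom f_p \cup \ran f_p$ (note that, unlike in Proposition 5.2, one cannot add a model representing $\beta^*$ to the condition, since $\beta^*$ has cofinality $\omega_1$; its role is instead carried by the hypothesis $\beta^* \in \Gamma$). To verify that $q$ is strongly $(N^*,\p)$-generic, fix a dense $D \subseteq N^* \cap \p$ and a condition $r \le q$, and, following the $\omega_2$-analogue of Proposition 5.2, produce a member of $D$ compatible with $r$. First extend $r$ to a condition $s$ with $f_s = f_r$ whose side-condition part is $\beta^*$-closed, adjoining to $A_r$ the models $M \cap \gamma$ for $M \in A_r$, where each $\gamma$ is taken to be the least element of $\Gamma$ strictly above $\sup(M \cap \beta^*)$ (cutting uniformly at $\beta^*$ can violate clause (3) of Definition 5.1 when $f_r$ has a value straddling $\beta^*$); that $s$ is still a condition is checked via Proposition 3.4 together with the analysis behind Lemmas 2.11--2.12. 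Then set $u := (f_s \cap Sk(\beta^*),\, A_s \cap P(\beta^*))$, which lies in $N^* \cap \p$ by Proposition 1.11; record the finitely many ``remainder'' ordinals produced by the models of $A_s$ over $\beta^*$ (the analogues of the points of $R_M(N)$ in Proposition 5.2) together with the finitely many values of $f_s$ in $[\beta^*,\omega_2)$; use the elementarity of $N^*$ to obtain a condition $v \le u$ in $N^*$ incorporating this data (in the role played by the auxiliary model $K$ and the models $K \cap \zeta$ in Proposition 5.2); pick $w \le v$ in $D$; and set $z := (f_w \cup f_s,\, A_w \cup A_s)$. That $z$ is a condition then breaks into three points: $A_w \cup A_s$ is adequate by Proposition 3.11, using that $A_s$ is $\beta^*$-closed and $A_w \subseteq Sk(\beta^*)$; $f_w \cup f_s$ is a function since any $\xi \in \dom f_w \cap \dom f_s$ lies below $\beta^*$ and the $\beta^*$-closure of $A_s$ together with clause (3) forces the pair $\langle \xi, f_s(\xi)\rangle$ into $Sk(\beta^*)$, hence into $f_w$; and clause (3) for $z$ follows from a case analysis on how each model of $A_z$ meets the interval $[\xi, f_z(\xi)]$ relative to $\beta^*$, parallel to case (B) of Proposition 5.2 with $\beta^*$ replacing $\beta_{M,N}$ and the recorded remainder ordinals linking intervals across $\beta^*$.

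The step I expect to be the main obstacle is exactly this interaction of the finite function with the ordinal $\beta^*$. In Proposition 5.2 the model $N$ is placed into the generic condition, and that single countable model already guards all of $[0,\sup N)$, so the function part of any $r \le q$ automatically respects $N$; at level $\omega_2$ no countable model reaches $\beta^*$, so the guarding must be arranged only after the fact, during the passage $r \rightsquigarrow s$, by cutting the models of $A_r$ at carefully chosen points of $\Gamma$ below $\beta^*+1$ and by tracking the remainder ordinals so that the amalgamation into $z$ still satisfies clause (3). Making this bookkeeping correct, so that both the function-coherence of $z$ and clause (3) for $z$ survive, is the heart of the argument. Once it is in place, the criterion preceding Definition 4.1 gives that $\p$ preserves $\omega_2$, and since $|\p| = \omega_2$ the poset preserves all larger cardinals as well.
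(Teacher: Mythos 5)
There is a genuine gap, and it is precisely at the point you flag as the main obstacle. You assert that ``one cannot add a model representing $\beta^*$ to the condition, since $\beta^*$ has cofinality $\omega_1$,'' and you therefore try to arrange the guarding of $\beta^*$ only during the passage $r \rightsquigarrow s$. But the paper does exactly what you rule out: it fixes a \emph{countable} $K \in \mathcal X$ with $\beta^* \in K$ and $p \in Sk(K)$, and defines $q$ by adding both $K$ and $K \cap \beta^*$ to $A_p$. There is no obstruction to this; $K$ need not have $\beta^*$ as a subset, only as an element. The pair $K, K\cap\beta^*$ then forces the key claim for every $r \le q$: if $\alpha' < \beta^* \le \alpha''$ for $\{\alpha',\alpha''\}=\{\alpha,f_r(\alpha)\}$, then since $\beta^* \in K$ the interval $[\alpha,f_r(\alpha)]$ meets $K$, so $\alpha',\alpha'' \in K$; but then $\alpha' \in K\cap\beta^*$, so $K\cap\beta^*$ meets the interval, so $\alpha'' \in K\cap\beta^* \subseteq \beta^*$, a contradiction. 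Hence no condition below $q$ can have a function pair straddling $\beta^*$, and one can then cut uniformly at $\beta^*$ and amalgamate cleanly.

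Your workaround does not repair this. Cutting $M$ at $\gamma := \min(\Gamma\setminus(\sup(M\cap\beta^*)+1))$ produces the same set $M\cap\gamma = M\cap\beta^*$ (because $M \cap [\sup(M\cap\beta^*),\beta^*) = \emptyset$), so $A_s$ is $\beta^*$-closed as before --- but the straddling problem is a property of $f_r$, not of where you cut, and for an arbitrary $r \le p$ (with $q=p$, which you suggest trying first) there is nothing to prevent $f_r(\alpha) \ge \beta^* > \alpha$ with some $M \in A_r$ meeting $[\alpha,f_r(\alpha)]$ below $\beta^*$; then clause (3) fails for $M\cap\beta^*$ in $A_s$, so $s$ is simply not a condition. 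The subsequent ``remainder ordinals'' bookkeeping is imported from the $\omega_1$-preservation proof, where the overlap between $N$ and other models of $A_s$ is genuinely finite and must be reflected into $N^*$; at the $\omega_2$ level the paper needs no such device, precisely because the claim about non-straddling pairs, secured at the moment $q$ is built, makes the amalgamation $z=(f_w\cup f_s,A_w\cup A_s)$ go through directly. The missing idea is thus a single, simple move --- put $\beta^*$ into a countable model in the condition --- and without it your outline of $r\rightsquigarrow s\rightsquigarrow u\rightsquigarrow v\rightsquigarrow w\rightsquigarrow z$ does not close.
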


\begin{proof}
Let $\theta > \omega_2$ be regular. 
Fix $N^* \prec H(\theta)$ of size $\omega_1$ such that 
$\p, \pi, \mathcal X \in N^*$ and 
$\beta^* := N^* \cap \omega_2 \in \Gamma$. 
Note that since $\Gamma$ is stationary in $\omega_2$, there are 
stationarily many such $N^*$ in $P_{\omega_2}(H(\theta))$. 
So to prove the proposition, it suffices to show that any condition in 
$N^* \cap \p$ 
has a strongly $(N^*,\p)$-generic extension. 
Fix $p \in N^* \cap \p$.

Observe that since $\pi \in N^*$ and 
$\pi : \omega_2 \to H(\omega_2)$ is a bijection, 
$$
N^* \cap H(\omega_2) = \pi[N^* \cap \omega_2] = \pi[\beta^*] = 
Sk(\beta^*),
$$
where the last equality holds by Lemma 1.3 
and the fact that $\beta^* \in \Gamma$ implies that 
$Sk(\beta^*) \cap \omega_2 = \beta^*$. 
In particular, $N^* \cap \p \subseteq Sk(\beta^*)$.

Fix $K \in \mathcal X$ with $\beta^* \in K$ and $p \in Sk(K)$.  
Then $p \in Sk(K) \cap Sk(\beta^*) = Sk(K \cap \beta^*)$. 
Define $q$ by letting $f_q := f_p$, and 
$$
A_q := A_p \cup \{ K \} \cup \{ K \cap \beta^* \}.
$$
Note that $A_q$ is adequate by Proposition 3.5 applied to $A_p$ and $K$ 
and Proposition 3.4 applied to $A_p \cup \{ K \}$ and $\beta^*$. 
It follows that $q$ is a condition, and easily $q \le p$.

We claim that $q$ is strongly $(N^*,\p)$-generic. 
So fix a set $D$ which is a dense subset of $N^* \cap \p$, 
and we will show that 
$D$ is predense below $q$.
Fix $r \le q$, 
and we will show that $r$ is compatible with some condition in $D$. 

We claim that if $\alpha \in \dom(f_r)$ and one of $\alpha$ or $f_r(\alpha)$ is 
below $\beta^*$, then they are both below $\beta^*$. 
For let $\alpha'$ be the smaller and $\alpha''$ the larger of 
$\alpha$ and $f_r(\alpha)$, and 
assume that $\alpha' < \beta^*$. 
Suppose for a contradiction that $\alpha'' \ge \beta^*$. 
Then since $\beta^* \in K$, 
$$
K \cap [\alpha,f_r(\alpha)] \ne \emptyset.
$$
So $\alpha, f_r(\alpha) \in K$, since $r$ is a condition. 
Hence $\alpha' \in K \cap \beta^*$. 
But then 
$$
(K \cap \beta^*) \cap [\alpha,f_r(\alpha)] \ne \emptyset.
$$
Since $r$ is a condition, we have that 
$\alpha'' \in K \cap \beta^*$. 
In particular, $\alpha'' < \beta^*$, 
which contradicts that $\alpha'' \ge \beta^*$.

\bigskip

We extend $r$ to $s$ to prepare for intersecting with $N^*$. 
Define $s$ by letting $f_s := f_r$ and 
$$
A_s := A_r \cup \{ M \cap \beta^* : M \in A_r \}.
$$
We claim that $s$ is a condition. 
Requirements (1) and (2) in the definition of $\p$ are easy, using 
Proposition 3.4. 
For (3), suppose that $\alpha \in \dom(f_r)$, $M \in A_r$, and 
$$
(M \cap \beta^*) \cap [\alpha,f_r(\alpha)] \ne \emptyset.
$$
Then obviously $M \cap [\alpha,f_r(\alpha)] \ne \emptyset$, so 
$\alpha$ and $f_r(\alpha)$ are in $M$ since $r$ is a condition. 
Let $\alpha'$ be the smaller and $\alpha''$ the larger of 
$\alpha$ and $f_r(\alpha)$. 
Since $M \cap \beta^*$ meets the interval $[\alpha,f_r(\alpha)]$, 
clearly $\alpha' < \beta^*$. 
By the claim in the preceding paragraph, 
it follows that $\alpha'' < \beta^*$. 
So $\alpha,f_r(\alpha) \in M \cap \beta^*$.

We will find a condition in $D$ which is compatible with $s$. 
Since $s \le r$, it follows that there is a condition in $D$ which is 
compatible with $r$, completing the proof.

\bigskip

Let 
$$
v := (f_s \cap Sk(\beta^*), A_s \cap Sk(\beta^*)).
$$
So $f_v = f_s \cap (\beta^* \times \beta^*)$, and by Proposition 1.11, 
$A_v = A_s \cap P(\beta^*)$. 
Clearly $v$ is a condition and $v$ is in $N^*$.

Since $D$ is dense in $N^* \cap \p$, fix $w \le v$ in $D$. 
Then $w \in N^* \cap \p \subseteq Sk(\beta^*)$. 
We will show that $w$ is compatible with $s$.

Let 
$$
z := (f_w \cup f_s,A_w \cup A_s).
$$
We will prove that $z$ is a condition. 
Then clearly $z \le w, s$, which completes the proof. 
We check requirements (1), (2), and (3) in the definition of $\p$.

\bigskip

(1) Let $\alpha \in \dom(f_w) \cap \dom(f_s)$. 
Then $\alpha < \beta^*$. 
Thus $f_s(\alpha) < \beta^*$ by the claim above. 
Hence 
$$
\langle \alpha, f_s(\alpha) \rangle \in f_s \cap Sk(\beta^*) = f_v 
\subseteq f_w.
$$
So $\langle \alpha, f_s(\alpha) \rangle \in f_w$, that is, 
$f_s(\alpha) = f_w(\alpha)$. 
This shows that $f_w \cup f_s$ is a function.

\bigskip

(2) $A_z$ is adequate by Proposition 3.11, since $A_s$ is $\beta^*$-closed.

\bigskip

(3) Let $M \in A_s$ and $\alpha \in \dom(f_w)$, and assume that 
$$
M \cap [\alpha,f_w(\alpha)] \ne \emptyset.
$$
We will show that $\alpha$ and $f_w(\alpha)$ are in $M$. 
Since $w \in N^*$, the ordinals 
$\alpha$ and $f_w(\alpha)$ are less than $\beta^*$. 
So 
$$
(M \cap \beta^*) \cap [\alpha,f_w(\alpha)] \ne \emptyset.
$$
But 
$$
M \cap \beta^* \in A_s \cap Sk(\beta^*) = A_v \subseteq A_w.
$$
So $M \cap \beta^* \in A_w$. 
Since $w$ is a condition, 
the ordinals $\alpha$ and $f_w(\alpha)$ are in 
$M \cap \beta^*$, and hence in $M$.

Now let $M \in A_w$ and $\alpha \in \dom(f_s)$, and suppose that 
$$
M \cap [\alpha,f_s(\alpha)] \ne \emptyset.
$$
We will show that $\alpha$ and $f_s(\alpha)$ are in $M$. 
Since $M \subseteq \beta^*$, the smaller of $\alpha$ and $f_s(\alpha)$ is 
below $\beta^*$. 
By the claim above, this implies that $\alpha$ and $f_s(\alpha)$ are both 
below $\beta^*$. 
Hence 
$$
\langle \alpha, f_s(\alpha) \rangle \in f_s \cap Sk(\beta^*) = f_v \subseteq f_w.
$$
Therefore $f_s(\alpha) = f_w(\alpha)$. 
Since $M \in A_w$ and $w$ is a condition, we have that 
$\alpha$ and $f_s(\alpha) = f_w(\alpha)$ are in $M$.
\end{proof}

\section{Adding a nonreflecting stationary set}

We now give an example of a forcing poset using 
adequate sets of models as side conditions for adding a more complex object. 
We define a forcing poset which adds a stationary subset of 
$\omega_2 \cap \cof(\omega)$ with finite conditions 
which does not reflect.\footnote{The classical way of adding a nonreflecting set is by initial segments, 
ordered by end-extension.}

\begin{definition}
Let $\p$ be the forcing poset whose conditions are triples $(a,x,A)$ satisfying:
\begin{enumerate}
\item $a$ is a finite subset of $\omega_2 \cap \cof(\omega)$;
\item $x$ is a finite set of triples $\langle \alpha,\gamma,\beta \rangle$, where 
$\alpha \in \Gamma$ and $\gamma < \beta < \alpha$;
\item $A$ is a finite adequate set;
\item if $\langle \alpha,\gamma,\beta \rangle$ and 
$\langle \alpha,\gamma',\beta' \rangle$ are 
distinct triples in $x$, 
then $[\gamma,\beta) \cap [\gamma',\beta') = \emptyset$;
\item if $\xi \in a$, $M \in A$, $\sup(M \cap \xi) = \xi$, and 
$M \setminus \xi \ne \emptyset$, then $\xi \in M$;
\item suppose that $M \in A$, $\alpha \in M$, 
and $\langle \alpha,\gamma,\beta \rangle \in x$; 
if $M \cap [\gamma,\beta] \ne \emptyset$, then $\gamma,\beta \in M$; 
if $M \cap [\gamma,\beta] = \emptyset$, then 
$\sup(M \cap \alpha) < \gamma$.
\end{enumerate}
Let $(b,y,B) \le (a,x,A)$ if $a \subseteq b$, $x \subseteq y$, and 
$A \subseteq B$.
\end{definition}

If $p = (a,x,A)$ is a condition, we write $a_p := a$, $x_p := x$, and $A_p := A$.

\bigskip

We give some motivation for the definition. 
The first component of a condition approximates a generic stationary subset of 
$\omega_2 \cap \cof(\omega)$. 
Let $\dot S$ be a $\p$-name such that $\p$ forces 
$$
\dot S = \{ \xi : \exists p \in \dot G \ \xi \in a_p \}.
$$
For each $\alpha \in \Gamma$, let $\dot c_\alpha$ be a $\p$-name such 
that $\p$ forces
$$
\dot c_\alpha = \{ \gamma : \exists p \in \dot G \ \exists \beta \ 
\langle \alpha, \gamma, \beta \rangle \in x_p \}.
$$
We will show that $\dot c_\alpha$ is forced to be cofinal in $\alpha$. 
Property (5) in the definition of $\p$ 
will imply that $\dot S$ does not contain any limit points of 
$\dot c_\alpha$, and thus $\dot S \cap \alpha$ is nonstationary in $\alpha$.

\bigskip

We first prove that $\p$ preserves $\omega_1$ and $\omega_2$ 
and forces that $\dot S$ is stationary. 
Since $\p$ has size $\omega_2$, it also preserves cardinals larger than $\omega_2$. 
We then analyze the limit points of the $\dot c_\alpha$'s and show that 
$\dot S$ does not reflect.

Note that if $(a,x,A)$ is a condition, $M_1, \ldots, M_k \in A$, and 
$\beta_1, \ldots, \beta_k \in \Gamma$, then 
$(a,x,A \cup \{ M_1 \cap \beta_1, \ldots, M_k \cap \beta_k \})$ 
is a condition. 
For requirements (1)--(4) are immediate using Proposition 3.4, 
and (5) and (6) are preserved under taking 
initial segments of models.

\begin{proposition}
The forcing poset $\p$ is strongly proper on a stationary set, and forces that 
$\dot S$ is stationary.
\end{proposition}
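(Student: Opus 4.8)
The plan is to follow the template of Proposition 4.2, but with two separate tasks: first the strong genericity argument for preserving $\omega_1$, and then the stationarity of $\dot S$. For the genericity part, I would fix $\theta > \omega_2$ regular and a countable $N^* \prec H(\theta)$ with $\p, \pi, \mathcal X \in N^*$ and $N := N^* \cap \omega_2 \in \mathcal X$; as in Proposition 4.2, elementarity gives $N^* \cap H(\omega_2) = Sk(N)$, so $N^* \cap \p \subseteq Sk(N)$. Given $p \in N^* \cap \p$, set $q := (a_p, x_p, A_p \cup \{N\})$, which is a condition by Lemma 3.5 since adding a model does not affect requirements (1)--(4), and requirements (5)--(6) are vacuous for $N$ because $p \in Sk(N)$ means every ordinal mentioned in $a_p$ or $x_p$ lies in $N$. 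I would then claim $q$ is strongly $(N^*,\p)$-generic: given a dense $D \subseteq N^* \cap \p$ and $r \le q$, close $r$ off under initial segments at comparison points with $N$ to get $s$ with $A_s$ being $N$-closed (one must check requirements (5) and (6) survive this closure, which should be routine since they are preserved under initial segments of models), then intersect with $N^*$ to get $u = (a_s \cap Sk(N), x_s \cap Sk(N), A_s \cap Sk(N))$, find $K \in N^* \cap \mathcal X$ with $u \in Sk(K)$ and $R(N) := \bigcup\{R_M(N) : M \in A_s\} \subseteq K$, build an auxiliary condition $v$ in $N^*$ adding $K$ and the $K \cap \zeta$ for $\zeta \in R(N)$, extend $v$ to $w \in D$, and finally amalgamate $w$ with $s$ over $N$ via Proposition 3.9.

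The main obstacle, as in the function case, will be verifying that the amalgamated triple $z := (a_w \cup a_s, x_w \cup x_s, A_w \cup A_s)$ is a condition --- specifically requirements (4), (5), and (6). Requirement (4) (disjointness of the $[\gamma,\beta)$ intervals for triples with the same $\alpha$) needs the observation that if $\langle \alpha, \gamma, \beta\rangle \in x_w$ then $\alpha \in N$ (since $w \in Sk(N)$ and $\alpha \in \Gamma \cap N$ forces $\gamma, \beta \in N$ as well), hence any conflicting triple in $x_s$ with the same $\alpha$ must already lie in $x_s \cap Sk(N) \subseteq x_v \subseteq x_w$, so there is no genuine conflict. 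For (5) and (6), the cross-cases are $M \in A_w$ with data from $s$, and $M \in A_s$ with data from $w$; the first case uses $M \subseteq N$ and the fact that $s$ is a condition together with $N$ being in $A_s$; the second case is where the remainder points $R_M(N)$ and the model $K$ with $R(N) \subseteq K$ do the work, exactly paralleling case (B) in the proof of Proposition 4.2 --- if $M \in A_s$ meets an interval $[\gamma,\beta]$ coming from a triple with $\alpha \in N$, the interval sits below $\beta_{M,N}$ (else one produces $\zeta = \min(N \setminus \gamma') \in R_M(N) \subseteq K$ and derives a contradiction using that $K$ and the $K \cap \zeta$ belong to $A_w$), after which one handles $M \cap \beta_{M,N} \in Sk(N)$ (so $M \cap \beta_{M,N} \in A_w$) versus the other two comparison types (so $N \cap \beta_{M,N} \subseteq M$) just as before. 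The second clause of requirement (6), "$\sup(M \cap \alpha) < \gamma$ when $M \cap [\gamma,\beta] = \emptyset$", needs a little care but follows since $\alpha \in N$ forces $\gamma$ into $N$, and one compares $\sup(M \cap \alpha)$ with $\beta_{M,N}$.

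For the stationarity of $\dot S$, I would argue in $V$ that given a condition $p$ and a $\p$-name $\dot E$ forced to be a club in $\omega_2$, we find $q \le p$ and $\xi$ with $q \Vdash \xi \in \dot S \cap \dot E$. Choose $\theta$ large, a countable $N^* \prec H(\theta)$ with $p, \dot E, \p, \pi, \mathcal X \in N^*$ and $N := N^* \cap \omega_2 \in \mathcal X$, and let $\xi := N \cap \omega_2 = \sup(N \cap \omega_2)$, which has cofinality $\omega$, so $\xi \in \omega_2 \cap \cof(\omega)$. Form $q := (a_p \cup \{\xi\}, x_p, A_p \cup \{N\})$. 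Requirement (1) holds since $\cf(\xi) = \omega$; requirement (5) for the new element $\xi$ and the model $N$ holds because $\sup(N \cap \xi) = \xi$ but $N \setminus \xi = \emptyset$, so the hypothesis fails vacuously; for any other $M \in A_p$, since $p \in N^*$ we have $M \in Sk(N)$, hence $M \subseteq N$, so $\sup(M \cap \xi) \le \sup(M) < \xi$ and requirement (5) is again vacuous for the pair $(\xi, M)$; requirement (6) is unaffected. By the strong genericity just proved, $q$ is strongly $(N^*,\p)$-generic, and a standard argument (using that $\dot E \in N^*$ is forced club, so $N^*$ "believes" $\dot E \cap N$ is unbounded in $\xi$, and genericity lets us find compatible conditions below $q$ forcing larger and larger elements of $\dot E$ below $\xi$) shows $q \Vdash \xi \in \lim(\dot E) \subseteq \dot E$. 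Since $q \Vdash \xi \in \dot S$ by construction, we are done; the only subtle point is the routine verification that a strongly $(N^*,\p)$-generic condition forces $\sup(\dot E \cap N) = \xi$, which is the usual reflection-of-density argument.
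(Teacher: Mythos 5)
Your stationarity argument at the end is correct and matches the paper's. The problem is in the strong-properness argument: you have imported the auxiliary construction from Proposition 5.2 (the function poset) — namely, find $K \in N^* \cap \mathcal X$ with $u \in Sk(K)$ and $R(N) \subseteq K$, and add $K$ and the $K \cap \zeta$ for $\zeta \in R(N)$ — but this is not the construction the situation requires, and it breaks exactly at requirement~(5).

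Consider the hard cross-case: $\xi \in a_w$, $M \in A_s$, $\sup(M \cap \xi) = \xi$, and $M \setminus \xi \ne \emptyset$. Since $\xi \in N$ and $\xi$ is a limit point of $M$, Proposition~2.6 gives $\xi < \beta_{M,N}$; the only nontrivial subcase is $M \cap \beta_{M,N} \in Sk(N)$ with $\min(M \setminus \xi) > \beta_{M,N}$, so that $M \cap \beta_{M,N} = M \cap \xi$ and $\xi = \sup(M \cap \beta_{M,N}) \notin M$. To derive a contradiction you would need a model $L \in A_w$ that end-extends $M \cap \beta_{M,N}$ (so $\sup(L \cap \xi) = \xi$ and $L \setminus \xi \ne \emptyset$) but does \emph{not} contain $\xi$; then $w$ being a condition would force $\xi \in L$, contradiction. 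Your auxiliary models cannot supply this $L$: since $M \cap \beta_{M,N} \in A_u \subseteq Sk(K)$, by elementarity $\xi = \sup(M \cap \beta_{M,N}) \in Sk(K) \cap \omega_2 = K$, so $\xi$ lies in $K$ and in every $K \cap \zeta$ with $\zeta > \xi$; and the $K \cap \zeta$ with $\zeta \le \xi$ have $\sup((K \cap \zeta) \cap \xi) < \xi$, so requirement~(5) is vacuous for them. No contradiction is obtained, and the argument collapses. The paper instead records the set $Z$ of models $M \cap \beta_{M,N}$ (with $M \setminus \beta_{M,N} \ne \emptyset$), notes that for each such $K \in Z$ the condition $s$ witnesses the existence of a model properly end-extending $K$ and omitting $\sup(K)$, and then chooses $v \in N^*$ reflecting precisely this property. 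That "omitting the supremum" feature is what your $K$-construction cannot replicate.

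Two smaller issues. First, your verification of requirement~(4) asserts that a conflicting triple $\langle \alpha, \gamma', \beta' \rangle \in x_s$ with the same $\alpha$ must already lie in $x_s \cap Sk(N)$; this is false when $N \cap [\gamma', \beta'] = \emptyset$. The paper instead splits into the two cases of requirement~(6) applied to $N$: either $\gamma',\beta' \in N$ (and one uses $w$), or $\sup(N \cap \alpha) < \gamma'$ forces $\beta < \gamma'$, giving disjointness directly. Second, your use of $R(N)$ and $K$ for requirement~(6) is unnecessary: in the cross-case $M \in A_s$, $\langle \alpha,\gamma,\beta\rangle \in x_w$, the hypothesis already gives $\alpha \in M$ and $\alpha \in N$, so $\alpha < \beta_{M,N}$ by Proposition~2.6, putting the whole interval $[\gamma,\beta] \subseteq [0,\alpha)$ below $\beta_{M,N}$ with no remainder-point argument needed.
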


\begin{proof}
Let $\dot E$ be a $\p$-name for a club subset of $\omega_2$. 
Fix a regular cardinal $\theta > \omega_2$ with 
$\p$ and $\dot E$ in $H(\theta)$. 
Let $N^*$ be a countable elementary substructure of 
$H(\theta)$ which contains $\p, \dot E, \pi$ and satisfies that 
$N := N^* \cap \omega_2 \in \mathcal X$. 
Note that since $\mathcal X$ is stationary, there are stationarily many such 
$N^*$ in $P_{\omega_1}(H(\theta))$. 

Observe that since $\pi \in N^*$ and $\pi : \omega_2 \to H(\omega_2)$ 
is a bijection, by elementarity we have that 
$$
N^* \cap H(\omega_2) = \pi[N^* \cap \omega_2] = \pi[N] = Sk(N),
$$
where the last equality holds by Lemma 1.3 
and the fact that 
$N \in \mathcal X$ implies that $Sk(N) \cap \omega_2 = N$. 
In particular, $N^* \cap \p \subseteq Sk(N)$.

Let $p \in N^* \cap \p$. 
We will find an extension of $p$ which is strongly $(N^*,\p)$-generic. 
Let $\xi^* := \sup(N \cap \omega_2)$. 
Define 
$$
q := (a_p \cup \{ \xi^* \}, x_p, A_p \cup \{ N \}).
$$
It is easy to check that $q$ is a condition, and $q \le p$. 
We will prove that $q$ is strongly $(N^*,\p)$-generic. 

If this argument is successful, then clearly $\p$ is strongly proper 
on a stationary set. 
Let us note that this argument also shows that $\p$ forces that 
$\dot S$ is stationary. 
For given a condition $p$, we can find $N^*$ as above such that 
$p \in N^*$. 
Let $q \le p$ be strongly $(N^*,\p)$-generic. 
Since $q$ is strongly $(N^*,\p)$-generic, by standard proper forcing facts, 
$q$ forces that $N^*[\dot G] \cap On = N \cap On$. 
As $\dot E \in N^*$, $q$ forces that 
$$
\xi^* = \sup(N^* \cap \omega_2) = \sup(N^*[\dot G] \cap \omega_2) \in \dot E.
$$
Since $q$ also forces that $\xi^* \in \dot S$, this shows that $q$ forces 
that $\dot E \cap \dot S$ is nonempty.

\bigskip
 
Towards proving that $q$ is strongly $(N^*,\p)$-generic, 
fix a set $D$ which is a dense subset of $N^* \cap \p$. 
We will show that 
$D$ is predense below $q$.
Let $r \le q$ be given, and we 
will find a condition in $D$ which is compatible with $r$.

We extend $r$ to prepare for intersecting with $N^*$. 
Define $s$ by letting $a_s := a_r$, $x_s := x_r$, and 
$$
A_s := A_r \cup \{ M \cap \beta_{M,N} : M \in A_r, \ 
M \cap \beta_{M,N} \in Sk(N) \}.
$$
Then $A_s$ is $N$-closed (see Definition 3.8). 
By the comments preceding the proposition, 
$s$ is a condition, and clearly $s \le r$. 
Since $s \le r$, we will be done if we can find a condition in $D$ which 
is compatible with $s$.

\bigskip

Define 
$$
u := (a_s \cap Sk(N),x_s \cap Sk(N),A_s \cap Sk(N)).
$$
Note that $u$ is in $\p \cap Sk(N)$, and clearly $s \le u$. 

Let $Z$ be the set of models in $A_u$ of the form 
$M \cap \beta_{M,N}$, where $M \in A_s$ and 
$M \setminus \beta_{M,N} \ne \emptyset$. 
Note that for such an $M$, the ordinal $\sup(M \cap \beta_{M,N})$ is not in $M$.  
For otherwise, as $\beta_{M,N}$ has cofinality $\omega_1$, 
$\sup(M \cap \beta_{M,N})$ would be in $M \cap \beta_{M,N}$, which 
is impossible since $M$ is closed under successors. 
The set $Z$ is in $N^*$ because it is a finite subset of $A_u$. 

The condition $s$ satisfies the property that $s \le u$, and for all $K \in Z$, 
there is $M \in A_s$ such that $K$ is a proper initial segment of $M$ and 
$\sup(K) \notin M$. 
By the elementarity of $N^*$, we can fix a condition $v \le u$ in $N^*$ 
such that for all $K \in Z$, there is $M \in A_v$ such that $K$ is a proper 
initial segment of $M$ and $\sup(K) \notin M$.

Since $D$ is dense in $N^* \cap \p$, we can fix $w \le v$ in $D$. 
We will show that $w$ and $s$ are compatible, which finishes the proof. 
As $D \subseteq N^* \cap \p \subseteq Sk(N)$, we have that 
$w \in \p \cap Sk(N)$. 
Define 
$$
z := (a_w \cup a_s,x_w \cup x_s,A_w \cup A_s).
$$
We claim that $z$ is a condition. 
Then clearly $z \le w, s$, and we are done. 
We verify that $z$ satisfies requirements (1)--(6) in the definition of $\p$.

\bigskip

(1) and (2) are immediate, and (3) follows 
from Proposition 3.9, since $A_s$ is $N$-closed.

\bigskip
 
(4) Let $\langle \alpha,\gamma,\beta \rangle \in x_w$ and 
$\langle \alpha,\gamma',\beta' \rangle \in x_s$ be distinct. 
Then $\alpha \in N$. 
If $N \cap [\gamma',\beta'] \ne \emptyset$, then $\gamma', \beta' \in N$ since $s$ is a condition. 
So in that case, 
$$
\langle \alpha,\gamma',\beta' \rangle \in x_s \cap Sk(N) = x_u 
\subseteq x_v \subseteq x_w.
$$
Hence $[\gamma,\beta) \cap [\gamma',\beta') = \emptyset$, since $w$ is a condition. 

Otherwise $N \cap [\gamma',\beta'] = \emptyset$. 
Since $\alpha \in N$ and $s$ is a condition, 
$\sup(N \cap \alpha) < \gamma'$. 
But $\beta \in N \cap \alpha$, so 
$\beta < \sup(N \cap \alpha) < \gamma'$. 
So clearly $[\gamma,\beta) \cap [\gamma',\beta') = \emptyset$.

\bigskip

(5) Suppose that $\xi \in a_s$, $M \in A_w$, 
$\sup(M \cap \xi) = \xi$, and $M \setminus \xi \ne \emptyset$. 
We will show that $\xi \in M$. 
Since $M \in Sk(N)$, $M \cap \xi$ is in $Sk(N)$, since it is an initial segment of $M$. 
So $\sup(M \cap \xi) = \xi$ is in $N$. 
Hence 
$$
\xi \in a_s \cap Sk(N) = a_u \subseteq a_v \subseteq a_w.
$$
Since $w$ is a condition, it follow that $\xi$ is in $M$.

Now assume that $\xi \in a_w$, $M \in A_s$, 
$\sup(M \cap \xi) = \xi$, and $M \setminus \xi \ne \emptyset$. 
We will prove that $\xi \in M$. 
Suppose for a contradiction that $\xi \notin M$. 
Since $\sup(M \cap \xi) = \xi$ and $\xi \in N$, it follows that 
$\xi < \beta_{M,N}$ by Proposition 2.6. 
But $\xi \in N \setminus M$. 
So the only comparison between $M$ and $N$ that 
is possible is that $M \cap \beta_{M,N}$ is in $Sk(N)$, since the other comparisions 
together with the fact that $\xi < \beta_{M,N}$ would imply that $\xi \in M$. 
Therefore 
$$
M \cap \beta_{M,N} \in A_s \cap Sk(N) = A_u \subseteq A_v \subseteq A_w.
$$
So $M \cap \beta_{M,N} \in A_w$.

If $\min(M \setminus \xi) < \beta_{M,N}$, then 
$\xi \in M \cap \beta_{M,N}$ since $w$ is a condition, which is a contradiction. 
Therefore $\min(M \setminus \xi) > \beta_{M,N}$. 
So $M \cap \beta_{M,N} \in Z$. 
It easily follows that $M \cap \beta_{M,N} = M \cap \xi$, and hence 
$$
\sup(M \cap \beta_{M,N}) = \sup(M \cap \xi) = \xi.
$$
By the choice of $v$ and the fact that $M \cap \beta_{M,N}$ is in $Z$, 
there is $L \in A_w$ such that 
$M \cap \beta_{M,N}$ is a proper initial segment of $L$ and 
$\sup(M \cap \beta_{M,N}) = \xi$ is not in $L$. 
But then $L \in A_w$, $\xi \in a_w$, $\sup(L \cap \xi) = \xi$, 
and $L \setminus \xi$ is nonempty. 
Since $w$ is a condition, $\xi \in L$, which is a contradiction.

\bigskip

(6) Suppose that $M \in A_w$, $\alpha \in M$, and 
$\langle \alpha,\gamma,\beta \rangle \in x_s$. 
Since $M \in Sk(N)$, $\alpha \in N$. 
Suppose that $N \cap [\gamma,\beta] \ne \emptyset$. 
Then $\gamma, \beta \in N$, since $s$ is a condition. 
Hence 
$$
\langle \alpha,\gamma,\beta \rangle \in x_s \cap Sk(N) = x_u 
\subseteq x_v \subseteq x_w.
$$
Since $w$ is a condition, it follows that 
$M$ and $\langle \alpha,\gamma,\beta \rangle$ satisfy requirement (6). 

Suppose on the other hand that $N \cap [\gamma,\beta] = \emptyset$. 
Then since $s$ is a condition, 
$\sup(N \cap \alpha) < \gamma$. 
Hence 
$$
\sup(M \cap \alpha) < \sup(N \cap \alpha) < \gamma,
$$
so again (6) is satisfied.

Now suppose that $M \in A_s$, $\alpha \in M$, 
and $\langle \alpha,\gamma,\beta \rangle \in x_w$. 
Then $\alpha \in M \cap N$, so $\alpha < \beta_{M,N}$ 
by Proposition 2.6. 
If $N \cap \beta_{M,N}$ is either equal to 
$M \cap \beta_{M,N}$ or in $Sk(M)$, 
then $N \cap \beta_{M,N} \subseteq M$, and hence $\gamma, \beta \in M$, 
which proves (6).

Assume that $M \cap \beta_{M,N}$ is in $Sk(N)$. 
Then 
$$
M \cap \beta_{M,N} \in A_s \cap Sk(N) = A_u \subseteq A_v 
\subseteq A_w.
$$
If $M \cap [\gamma,\beta] \ne \emptyset$, it follows that 
$$
(M \cap \beta_{M,N}) \cap [\gamma,\beta] \ne \emptyset,
$$
since $\gamma < \beta < \alpha < \beta_{M,N}$. 
Since $w$ is in a condition, $\gamma,\beta$ are in $M \cap \beta_{M,N}$, 
and hence in $M$. 
Otherwise $M \cap [\gamma,\beta] = \emptyset$. 
Then obviously 
$$
(M \cap \beta_{M,N}) \cap [\gamma,\beta] = \emptyset.
$$
So $\sup((M \cap \beta_{M,N}) \cap \alpha) < \gamma$. 
But since $\alpha < \beta_{M,N}$, 
it follows that 
$$
(M \cap \beta_{M,N}) \cap \alpha = M \cap \alpha,
$$
so $\sup(M \cap \alpha) < \gamma$.
\end{proof}

\begin{proposition}
The forcing poset $\p$ is $\omega_2$-c.c.
\end{proposition}

\begin{proof}
We will use Lemma 4.3. 
Let $\theta > \omega_2$ be regular. 
Fix $N^* \prec H(\theta)$ of size $\omega_1$ such that 
$\p, \pi, \mathcal X \in N^*$ and 
$\beta^* := N^* \cap \omega_2 \in \Gamma$. 
Note that since $\Gamma$ is stationary, there are stationarily many such 
models $N^*$ in $P_{\omega_2}(H(\theta))$. 

Observe that as $\pi \in N^*$ and $\pi : \omega_2 \to H(\omega_2)$ 
is a bijection, by elementarity we have that 
$$
N^* \cap H(\omega_2) = \pi[N^* \cap \omega_2] = \pi[\beta^*] = Sk(\beta^*),
$$
where the last equality holds by Lemma 1.3 
and the fact that 
$\beta^* \in \Gamma$ implies that $Sk(\beta^*) \cap \omega_2 = \beta^*$.   
In particular, $N^* \cap \p \subseteq Sk(\beta^*)$.

We will prove that the empty condition is strongly $(N^*,\p)$-generic. 
By Lemma 4.3, this implies that $\p$ is $\omega_2$-c.c. 
So fix a set $D$ which is a dense subset of $N^* \cap \p$, 
and we will show that $D$ is predense in $\p$.

Let $r \in \p$ be given. 
We will find a condition in $D$ which is compatible with $r$, which 
completes the proof. 
We extend $r$ to prepare for intersecting with $N^*$. 
Define $s$ so that $a_s := a_r$, $x_s := x_r$, and 
$$
A_s := A_r \cup \{ M \cap \beta^* : M \in A_r \}.
$$
Then easily $s$ is a condition, and $s \le r$. 
Since $s \le r$, we will be done if we can find a condition in $D$ which 
is compatible with $s$.

Define 
$$
u := (a_s \cap Sk(\beta^*),x_s \cap Sk(\beta^*),A_s \cap Sk(\beta^*)).
$$
In other words, $a_u := x_s \cap \beta^*$, 
$x_u := x_s \cap (\beta^*)^3$, and by Proposition 1.11, 
$A_u := A_s \cap P(\beta^*)$. 
Let $Z$ be the set of models in $A_u$ of the form 
$M \cap \beta^*$, where $M \in A_s$ and $M \setminus \beta^*$ is nonempty. 
Since $Z$ is finite, it is a member of $N^*$.

The condition $s$ satisfies that $s \le u$, and for all $K \in Z$, there is 
$M \in A_s$ such that $K$ is a proper initial segment of $M$ and 
$\sup(K) \notin M$. 
By elementarity, we can fix $v \le u$ in $N^*$ satisfying that for all $K \in Z$, 
there is $M \in A_v$ such that $K$ is a proper initial segment of $M$ 
and $\sup(K) \notin M$.

Since $D$ is dense in $N^* \cap \p$, fix $w \le v$ in $D$. 
We will show that $w$ and $s$ are compatible, which finishes 
the proof.

Since $D \subseteq \p \cap N^* \subseteq Sk(\beta^*)$, we have 
that $w \in Sk(\beta^*)$. 
Define 
$$
z := (x_w \cup x_s,x_w \cup x_s,A_w \cup A_s).
$$
We will prove that $z$ is a condition. 
Then clearly $z \le w, s$, and we are done. 
We verify requirements (1)--(6) in the definition of $\p$.

\bigskip

(1) and (2) are immediate, 
and (3) follows from Proposition 3.11 using 
the fact that $A_s$ is $\beta^*$-closed.

\bigskip

(4) Let $\langle \alpha,\gamma,\beta \rangle \in x_w$ 
and $\langle \alpha,\gamma',\beta' \rangle \in x_s$ be distinct. 
Then $\alpha < \beta^*$. 
So $\gamma' < \beta' < \alpha < \beta^*$. 
Hence 
$$
\langle \alpha,\gamma',\beta' \rangle \in x_s \cap Sk(\beta^*) = x_u 
\subseteq x_v \subseteq x_w.
$$
So $[\gamma,\beta) \cap [\gamma',\beta') = \emptyset$, 
since $w$ is a condition.

\bigskip

(5) Suppose that $M \in A_w$, $\xi \in a_s$, 
$\sup(M \cap \xi) = \xi$, and $M \setminus \xi \ne \emptyset$. 
Then $M \in N^*$, so that $\sup(M) < \beta^*$. 
Therefore $\xi < \beta^*$. 
So 
$$
\xi \in a_s \cap Sk(\beta^*) = a_u \subseteq a_v \subseteq a_w.
$$
Since $w$ is a condition, it follows that $\xi \in M$.

Now assume that $M \in A_s$, $\xi \in a_w$, 
$\sup(M \cap \xi) = \xi$, and 
$M \setminus \xi \ne \emptyset$. 
We need to show that $\xi \in M$. 
Since $\xi \in a_w \subseteq N^*$, we have that $\xi < \beta^*$. 
Also by Proposition 1.11, 
$$
M \cap \beta^* \in A_s \cap Sk(\beta^*) = A_u \subseteq A_v 
\subseteq A_w.
$$
So if $(M \cap \beta^*) \setminus \xi$ is nonempty, then 
$\xi \in M \cap \beta^*$ since $w$ is a condition.

Otherwise $M \cap \beta^* = M \cap \xi$ and 
$M \setminus \beta^*$ is nonempty. 
So $M \cap \beta^*$ is in $Z$. 
By the choice of $v$, 
there is $M' \in A_w$ such that $M \cap \beta^*$ is a proper initial segment of 
$M'$ and $\sup(M \cap \beta^*) = \xi \notin M'$. 
But then $M' \setminus \xi$ is nonempty and $\sup(M' \cap \xi) = \xi$. 
Since $w$ is a condition, $\xi$ must be in $M'$, which is a contradiction.

\bigskip

(6) Suppose that $M \in A_w$, $\alpha \in M$, and 
$\langle \alpha,\gamma,\beta \rangle \in x_s$. 
Then $M \in N^*$. 
Since $\alpha \in M$, $\alpha < \beta^*$, so 
$\gamma < \beta < \alpha < \beta^*$. 
Hence 
$$
\langle \alpha,\gamma,\beta \rangle \in x_s \cap Sk(\beta^*) = x_u 
\subseteq x_v \subseteq x_w.
$$
So (6) holds for $M$ and $\langle \alpha,\gamma,\beta \rangle$, because 
$w$ is a condition.

Now assume that $M \in A_s$, $\alpha \in M$, 
and $\langle \alpha,\gamma,\beta \rangle \in x_w$. 
Then $\alpha < \beta^*$. 
So $\alpha \in M \cap \beta^*$. 
Suppose that $M \cap [\gamma,\beta] \ne \emptyset$. 
Then $(M \cap \beta^*) \cap [\gamma,\beta] \ne \emptyset$. 
Since $M \cap \beta^* \in A_w$, $\gamma, \beta$ are in 
$M \cap \beta^*$, and hence in $M$, since $w$ is a condition. 

Now suppose that $M \cap [\gamma,\beta] = \emptyset$. 
Then $(M \cap \beta^*)  \cap [\gamma,\beta] = \emptyset$. 
Therefore $\sup((M \cap \beta^*) \cap \alpha) < \gamma$. 
But $(M \cap \beta^*) \cap \alpha = M \cap \alpha$. 
So $\sup(M \cap \alpha) < \gamma$.
\end{proof}

It remains to prove that $\p$ forces that $\dot S$ does not reflect. 
Towards that goal, let us first analyze the limit points of the sets 
$\dot c_\alpha$, for $\alpha \in \Gamma$.

\begin{lemma}
Let $\alpha$ be in $\Gamma$ and let $\xi < \alpha$. 
If $p$ forces that $\xi$ is a limit point of $\dot c_\alpha$, 
then there is some $M \in A_p$ such that 
$\sup(M \cap \xi) = \xi$ and $\alpha = \min(M \setminus \xi)$.
\end{lemma}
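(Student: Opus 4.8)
The plan is to argue by contradiction: assume $p$ forces that $\xi$ is a limit point of $\dot c_\alpha$, but that no $M\in A_p$ satisfies $\sup(M\cap\xi)=\xi$ and $\alpha=\min(M\setminus\xi)$; from this I will build an extension $q\le p$ and an ordinal $\eta<\xi$ such that $q$ forces $\dot c_\alpha\cap(\eta,\xi)=\emptyset$, contradicting the hypothesis on $p$. A useful first reduction is that, for $M\in\mathcal X$ with $\alpha\in M$, the condition ``$\sup(M\cap\xi)=\xi$ and $\alpha=\min(M\setminus\xi)$'' is equivalent to the single condition ``$\sup(M\cap\alpha)=\xi$'' (using that $M$ is closed under successor and that $\xi<\alpha$ is a limit ordinal). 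So I assume no $M\in A_p$ has both $\alpha\in M$ and $\sup(M\cap\alpha)=\xi$.

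Two independent blocking mechanisms are available, coming from requirements (6) and (4) in the definition of $\p$. First, if $q$ is a condition, $M\in A_q$ with $\alpha\in M$, and $\langle\alpha,\gamma,\beta\rangle\in x_q$ with $\gamma\le\sup(M\cap\alpha)$, then requirement (6) forces $\gamma\in M$ (the alternative $M\cap[\gamma,\beta]=\emptyset$ would force $\sup(M\cap\alpha)<\gamma$). Consequently, if some $M\in A_p$ has $\alpha\in M$, $\sup(M\cap\alpha)\ge\xi$, and $\sup(M\cap\xi)<\xi$, then $q:=p$ already forces $\dot c_\alpha\cap(\sup(M\cap\xi),\xi)=\emptyset$, contradicting the hypothesis. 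So I may assume that every $M\in A_p$ with $\alpha\in M$ and $\sup(M\cap\alpha)\ge\xi$ in fact has $\sup(M\cap\xi)=\xi$; call these models \emph{tall}. (In particular a tall model exists only if $\cf(\xi)=\omega$.)

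Second, if we can enlarge $x_p$ by a single triple $\langle\alpha,\eta,\mu\rangle$ with $\eta<\xi\le\mu<\alpha$ and keep a condition, then requirement (4) forbids every later triple with first coordinate $\alpha$ and second coordinate in $[\eta,\mu)\supseteq[\eta,\xi)$, so the enlarged condition forces $\dot c_\alpha\cap(\eta,\xi)=\emptyset$. The hypothesis on $p$ supplies an ordinal $\rho<\xi$ bounding the union, intersected with $\xi$, of the intervals $[\gamma',\beta')$ of triples of $x_p$ with first coordinate $\alpha$ — otherwise some such triple would have $\gamma'<\xi\le\beta'$ and $p$ would already force $\dot c_\alpha\cap(\gamma',\xi)=\emptyset$. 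If $A_p$ has no tall model, set $\mu:=\xi$ and choose $\eta\in(\rho,\xi)$ also above $\sup\{\sup(M\cap\alpha):M\in A_p,\ \alpha\in M\}$ (this supremum is $<\xi$ as there are no tall models); then $\langle\alpha,\eta,\xi\rangle$ may be adjoined. If $A_p$ has a tall model, let $M_0$ be one with $M_0\cap\omega_1$ least; by Proposition 2.6 and the $\in$-minimality analysis of Section 2, $M_0\cap\alpha\subseteq M$ for every tall $M\in A_p$, and $M_0\cap\xi$ is cofinal in $\xi$ since $M_0$ is tall. Since no model of $A_p$ witnesses the conclusion, $\sup(M_0\cap\alpha)>\xi$, so $\mu:=\min(M_0\setminus\xi)$ lies in $[\xi,\alpha)$ and belongs to every tall model; choose $\eta\in M_0\cap\xi$ above $\rho$ and above $\sup\{\sup(M\cap\alpha):M\in A_p,\ \alpha\in M,\ M\text{ not tall}\}$, and adjoin $\langle\alpha,\eta,\mu\rangle$.

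The main obstacle is verifying that $q:=(a_p,\,x_p\cup\{\langle\alpha,\eta,\mu\rangle\},\,A_p)$ really is a condition, the delicate point being requirement (4) for the new triple against the old ones, i.e. that no interval $[\gamma',\beta')$ of $x_p$ meets $[\eta,\mu)$. Here the hypothesis on $p$ forces each such old triple to have $\beta'<\xi$ or $\gamma'\ge\xi$; in the case $\xi\le\gamma'<\mu$ one applies requirement (6) to $M_0$ and the old triple — $\gamma'$ lies in the gap $M_0\cap[\xi,\mu)=\emptyset$, yet $\mu\in M_0$ gives $\sup(M_0\cap\alpha)\ge\mu>\gamma'$ — obtaining a contradiction, so in fact $[\gamma',\beta')$ is bounded by $\rho<\eta$ or contained in $[\mu,\alpha)$. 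Requirement (6) for the new triple is routine (against a tall $M$ one has $\eta,\mu\in M$; against a non-tall $M$ with $\alpha\in M$ one has $\sup(M\cap\alpha)<\eta$), and the remaining requirements are unaffected. Once $q$ is known to be a condition, requirement (4) forces $\dot c_\alpha\cap(\eta,\xi)=\emptyset$ below $q$, so $q$ forces $\xi$ not to be a limit point of $\dot c_\alpha$; as $q\le p$, this contradicts the hypothesis and completes the proof.
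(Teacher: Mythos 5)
Your proof is correct and follows essentially the same strategy as the paper's: identify the $\in$-minimal ``tall'' model (the paper's $A_2$), read off the ordinal $\mu$ (the paper's $\alpha^*$) from its gap at $\xi$, and adjoin a triple $\langle\alpha,\eta,\mu\rangle$ blocking $(\eta,\xi)$, verifying requirement (4) against old triples via requirement (6) and the minimality of $M_0$. The one noteworthy reorganization is that you dispense with the paper's WLOG step (adding a model containing $\alpha$ and $\xi$, which forces $\cf(\xi)=\omega$ and makes $A_2$ nonempty) by instead treating the case of no tall models directly with $\mu:=\xi$; this is a clean simplification but does not change the underlying mechanism.
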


\begin{proof}
Suppose for a contradiction that 
$p$ forces that $\xi$ is a limit point of $\dot c_\alpha$, 
but there is no $M \in A_p$ as described. 
Note that for all $\langle \alpha,\gamma,\beta \rangle \in x_p$, if 
$\gamma < \xi$ then $\beta < \xi$, since otherwise $p$ would force 
that $\xi$ is not a limit point of $\dot c_\alpha$. 

Without loss of generality, we may assume that there exists 
$M \in A_p$ such that $\alpha$ and $\xi$ are in $M$. 
For otherwise we can easily extend $p$ by adding such a set $M$.

We claim that there is no $M \in A_p$ such that $\alpha \in M$, 
$\sup(M \cap \xi) < \xi$, and $M \cap [\xi,\alpha) \ne \emptyset$. 
For suppose that there was such an $M$ in $A_p$. 
Since $p$ forces that $\xi$ is a limit point of $\dot c_\alpha$, we can find 
$q \le p$ such that $\langle \alpha,\gamma,\beta \rangle \in x_q$ for some 
$\gamma, \beta < \xi$ where $\gamma > \sup(M \cap \xi)$. 
But then $M \cap [\gamma,\beta] = \emptyset$ and 
$\gamma < \sup(M \cap \alpha)$, contradicting property (6) in 
the definition of $\p$ for $q$ being a condition. 
So if $M \in A_p$, $\alpha \in M$, and $\sup(M \cap \xi) < \xi$, then 
$\sup(M \cap \xi) = \sup(M \cap \alpha)$.

We can now conclude that $\xi$ has cofinality $\omega$. 
For by assumption there exists $M \in A_p$ such that 
$\alpha$ and $\xi$ are in $M$. 
If $\cf(\xi) = \omega_1$, then $M \in A_p$, $\alpha \in M$, 
$\sup(M \cap \xi) < \xi$ since $M$ is countable, and 
$M \cap [\xi,\alpha) \ne \emptyset$ since $\xi \in M$, which 
contradicts the claim.

Define sets $A_0$, $A_1$, and $A_2$ by 
$$
A_0 := \{ M \in A_p : \alpha \notin M \},
$$
$$
A_1 := \{ M \in A_p : \alpha \in M, 
\ \sup(M \cap \alpha) < \xi \},
$$
$$
A_2 := \{ M \in A_p : \alpha \in M, \ \sup(M \cap \xi) = \xi \}.
$$
By the claim, $A_p = A_0 \cup A_1 \cup A_2$. 
By our assumption for a contradiction, 
if $M \in A_2$ then $M \cap [\xi,\alpha) \ne \emptyset$. 

Note that if $M, N \in A_1 \cup A_2$, then $\alpha \in M \cap N$, 
which implies that $\alpha < \beta_{M,N}$ by Proposition 2.6. 
In particular, if $M \in A_1$ and $N \in A_2$, then 
$M \cap \alpha \in Sk(N)$. 
For in that case 
$$
\sup(M \cap \alpha) < \xi \le \sup(N \cap \alpha) < \alpha < \beta_{M,N},
$$
which implies that $M \cap \beta_{M,N} \in Sk(N)$, since the other two 
types of comparison between $M$ and $N$ are clearly impossible.

Observe that $A_2$ is nonempty. 
For by assumption there is $M \in A_p$ such that $\alpha$ and $\xi$ 
are in $M$. 
But $\xi$ has countable cofinality, so by elementarity 
$\sup(M \cap \xi) = \xi$.

Let $M$ be $\in$-minimal in $A_2$. 
Let $\alpha^* = \min(M \setminus \xi)$. 
Then $\xi \le \alpha^* < \alpha$. 
Fix $\gamma < \xi$ in $M$ which is large enough so that 
for all $N \in A_1$, 
$\sup(N \cap \alpha) < \gamma$, and 
for all $\langle \alpha,\zeta,\beta \rangle \in x_p$, if $\zeta < \xi$ then 
$\zeta, \beta < \gamma$. 
Now define $q$ by 
$$
q := (a_p, x_p \cup \{ \langle \alpha,\gamma,\alpha^* \rangle \}, A_p).
$$
We will prove that $q$ is a condition. 
Then clearly $q$ forces that $\xi$ is not a limit point of $\dot c_\alpha$, 
and $q \le p$, which is a contradiction.

Requirements (1), (2), (3), and (5) in the definition of $\p$ 
are immediate. 
For (4), consider $\langle \alpha,\gamma',\beta' \rangle \in x_p$. 
If $\gamma' < \xi$, then by the choice of $\gamma$, 
we have that $\gamma', \beta' < \gamma$. 
So $[\gamma',\beta') \cap [\gamma,\alpha^*) = \emptyset$.

Suppose that $\gamma' \ge \xi$. 
If $M \cap [\gamma',\beta'] \ne \emptyset$, then 
$\gamma',\beta' \in M$. 
Hence $\gamma' \ge \alpha^*$ by the minimality of $\alpha^*$. 
Therefore $[\gamma,\alpha^*) \cap [\gamma',\beta') = \emptyset$.
On the other hand, if $M \cap [\gamma',\beta'] = \emptyset$, then 
since $p$ is a condition, 
$$
\alpha^* < \sup(M \cap \alpha) < \gamma'.
$$
So again $[\gamma,\alpha^*) \cap [\gamma',\beta') = \emptyset$.

For (6), suppose that $N \in A_p$ and $\alpha \in N$. 
Then $N$ cannot be in $A_0$. 
If $N \in A_1$, then $\sup(N \cap \alpha) < \gamma$ by the choice 
of $\gamma$, 
so $N \cap [\gamma,\alpha^*] = \emptyset$ and 
$\sup(N \cap \alpha) < \gamma$ as required. 

Suppose that $N \in A_2$. 
Then by the $\in$-minimality of $M$, 
either $M \cap \beta_{M,N}$ equals $N \cap \beta_{M,N}$ 
or is in $Sk(N)$. 
In either case, $M \cap \beta_{M,N} \subseteq N$. 
Since $\alpha \in M \cap N$, $\alpha < \beta_{M,N}$. 
So $\gamma$ and $\alpha^*$ are in $M \cap \beta_{M,N}$, and 
hence in $N$.
\end{proof}

\begin{proposition}
The forcing poset $\p$ forces that 
$\dot S \cap \alpha$ is nonstationary in $\alpha$, for all 
$\alpha \in \Gamma$.
\end{proposition}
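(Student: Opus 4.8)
The strategy is to show that for each $\alpha \in \Gamma$, the set $\dot c_\alpha$ is forced to be cofinal in $\alpha$, and that $\dot S$ is forced to avoid all limit points of $\dot c_\alpha$ below $\alpha$; together these say $\dot c_\alpha$ witnesses that $\dot S \cap \alpha$ is nonstationary. Cofinality of $\dot c_\alpha$ is the easy direction: given a condition $p$ and $\delta < \alpha$, I want to extend $p$ by adding a triple $\langle \alpha,\gamma,\beta\rangle$ with $\gamma > \delta$. First enlarge $A_p$ so that $\alpha$ belongs to some model, then choose $\gamma$ above $\delta$ and above $\sup(M \cap \alpha)$ for every $M \in A_p$ with $\alpha \in M$ and $\sup(M\cap\alpha)<\alpha$ — this is possible since $A_p$ is finite and each such sup is below $\alpha$ — and also above the relevant endpoints of pre-existing triples in $x_p$ with middle coordinate $\alpha$ (for requirement (4)), while keeping $\gamma<\beta<\alpha$ for suitably chosen $\beta$; checking requirements (1)--(6) for the extended condition is then routine, with (6) handled exactly by the choice of $\gamma$ past the relevant sups (the case $M\cap[\gamma,\beta]=\emptyset$) and past nothing when $M\cap[\gamma,\beta]\neq\emptyset$ forces $\gamma,\beta\in M$, which one arranges by also demanding $\gamma$ lie in such an $M$ when $\alpha\in M$ and $\sup(M\cap\alpha)=\alpha$. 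Hence $\p$ forces $\dot c_\alpha$ to be unbounded in $\alpha$.

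Next I use Lemma 6.5. Suppose toward a contradiction that some condition $p$ forces $\dot S \cap \alpha$ to be stationary in $\alpha$. Since $\p$ forces $\dot c_\alpha$ cofinal in $\alpha$, the set of limit points of $\dot c_\alpha$ below $\alpha$ is forced to be club in $\alpha$ (when $\cf(\alpha) > \omega$; note $\alpha \in \Gamma \subseteq \Lambda$ has uncountable cofinality, so this set of limit points really is club). Therefore $p$ does not decide that $\dot S \cap \alpha$ misses all these limit points, so there is $q \le p$ and $\xi < \alpha$ such that $q$ forces $\xi \in \dot S$ and $q$ forces $\xi$ to be a limit point of $\dot c_\alpha$. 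By Lemma 6.5, there is $M \in A_q$ with $\sup(M \cap \xi) = \xi$ and $\alpha = \min(M \setminus \xi)$; in particular $M \setminus \xi \ne \emptyset$ (it contains $\alpha$). Since $q$ forces $\xi \in \dot S$, we may assume $\xi \in a_q$ (extending $q$ if necessary, which does not injure the forcing statements). But then $M \in A_q$, $\xi \in a_q$, $\sup(M \cap \xi) = \xi$, and $M \setminus \xi \ne \emptyset$, so requirement (5) in the definition of $\p$ forces $\xi \in M$. This contradicts $\xi = \min(M \setminus \xi) \notin M \cap \xi$... more precisely, $\xi \in M$ together with $\sup(M \cap \xi) = \xi$ and $M$ closed under successors would put $\xi + 1 \in M \cap \xi$, absurd; alternatively $\alpha = \min(M\setminus\xi)$ forces $\xi\notin M$ directly. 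This contradiction completes the proof.

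The only delicate point is the density argument producing $q$: I must be careful that "$\dot S \cap \alpha$ stationary'' genuinely lets me capture a limit point of $\dot c_\alpha$ inside $\dot S$. Since $\alpha$ has uncountable cofinality and $\dot c_\alpha$ is forced cofinal, the limit points of $\dot c_\alpha$ below $\alpha$ form a club subset of $\alpha$ in the extension, so any stationary subset of $\alpha$ meets it; thus $p$ cannot force $\dot S \cap \alpha$ disjoint from this club, and a suitable $q \le p$ and $\xi$ exist. One subtlety: the name $\dot c_\alpha$ depends on the generic, so "limit point of $\dot c_\alpha$'' is only a name, but this causes no trouble — we work inside a fixed generic extension to find $\xi$, then pull back to a condition $q$ forcing the relevant facts. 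I expect this bookkeeping around names and the cofinality argument to be the main (though mild) obstacle; the combinatorics of conditions is entirely governed by requirement (5) and Lemma 6.5, which do the real work.
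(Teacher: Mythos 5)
Your proof is correct and takes essentially the same route as the paper: prove $\dot c_\alpha$ cofinal in $\alpha$ by a density argument, then invoke the limit-point lemma (which is Lemma 6.4, not 6.5) together with requirement (5) to derive a contradiction. Two tiny wrinkles worth noting: the case ``$\alpha\in M$ and $\sup(M\cap\alpha)=\alpha$'' that you try to arrange $\gamma$ for in the cofinality argument cannot actually occur, since $M$ is countable while $\alpha\in\Gamma$ has cofinality $\omega_1$, so $\sup(M\cap\alpha)<\alpha$ always; and your phrase ``we may assume $\xi\in a_q$ (extending $q$ if necessary)'' compresses a genuine step --- the paper spells it out by finding $u\in\dot G$ with $\xi\in a_u$, passing to a condition $s$ deciding $u$, and taking a common extension $t\le s,u$, which is exactly how one justifies that such an extension of $q$ exists.
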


\begin{proof}
Fix $\alpha \in \Gamma$. 
First let us see that $\p$ forces that $\dot c_\alpha$ is unbounded in $\alpha$. 
Let $p \in \p$ and consider $\zeta < \alpha$. 
Since $\alpha$ has cofinality $\omega_1$, we can find 
$\gamma < \alpha$ such that (1) $\zeta < \gamma$, 
(2) $\sup(M \cap \alpha) < \gamma$ for 
all $M \in A_p$, and (3) $\gamma',\beta' < \gamma$ whenever 
$\langle \alpha,\gamma',\beta' \rangle$ is in $x_p$. 
Define $q$ by 
$$
q := (a_p,x_p \cup \{ \langle \alpha, \gamma, \gamma + 1 \rangle \},A_p).
$$
It is easy to check that $q$ is a condition, and clearly $q$ forces 
that $\dot c_\alpha$ contains a point above $\zeta$.

Now suppose that $p$ forces that $\xi$ is a limit point of $\dot c_\alpha$. 
We will prove that $p$ forces that $\xi$ is not in $\dot S$. 
This argument shows that $\p$ forces that $\dot S$ is disjoint from the 
club of limit points of $\dot c_\alpha$, and hence is nonstationary 
in $\alpha$.

Suppose for a contradiction that 
there is $q \le p$ such that $q$ forces that $\xi$ is in $\dot S$. 
Then $q$ forces that there is a condition $\dot u$ in $\dot G$ such that 
$\xi \in \dot a_u$. 
Fix $s$ and $u$ such that $s \le q$ and $s$ forces that $\dot u$ is equal to $u$. 
Then $\xi$ is in $a_u$. 
Since $s$ forces that $u$ is in $\dot G$, $s$ and $u$ are compatible. 
Fix $t \le s, u$. 
Then $\xi \in a_u \subseteq a_t$. 
So $\xi \in a_t$ and $t \le p$.

Since $t$ forces that $\xi$ is a limit point of $\dot c_\alpha$, 
by Lemma 6.4 there is some $M \in A_t$ such that 
$\sup(M \cap \xi) = \xi$ and $\alpha = \min(M \setminus \xi)$. 
So we have that $\xi \in a_t$, $M \in A_t$, $\sup(M \cap \xi) = \xi$, 
and $M \setminus \xi \ne \emptyset$. 
By (5) in the definition of $\p$, $\xi$ must be in $M$. 
But $\alpha = \min(M \setminus \xi)$ implies that $\xi$ is not in $M$, 
and we have a contradiction.
\end{proof}

\bigskip

Note that in the case $\Gamma = \Lambda$, $\p$ forces that 
$\dot S \cap C$ does not reflect to any ordinal in $\omega_2 \cap \cof(\omega_1)$, 
since any such reflection point would be in $\Lambda$ since it is a limit point of 
$C$ with cofinality $\omega_1$.

\section{Adding a Kurepa Tree}

In our last application of the paper, we define a forcing poset which adds an 
$\omega_1$-Kurepa tree with finite conditions. 

Recall that an $\omega_1$-Kurepa tree is a tree with height $\omega_1$, 
all of whose levels are countable, which has more than $\omega_1$ many 
branches of length $\omega_1$. 
Such a tree can be forced using classical methods.

The conditions in our forcing poset for adding an $\omega_1$-Kurepa tree will 
include a finite tree on $\omega_1$. 
We begin by reviewing the relevant ideas and notation about finite trees, and 
prove some basic lemmas which will be useful when analyzing the 
forcing poset.

\begin{definition}
By a \emph{finite tree on $\omega_1$} we mean a pair 
$T = (|T|,<_T)$ satisfying:
\begin{enumerate}
\item $|T|$ is a finite subset of $\omega_1$;
\item $<_T$ is an irreflexive, transitive relation on $|T|$;
\item if $a, b <_T c$, then either $a = b$, $a <_T b$, or $b <_T a$;
\item $a <_T b$ implies that $a < b$.
\end{enumerate}
Given finite trees $T$ and $U$ on $\omega_1$, 
we say that $U$ \emph{end-extends} $T$ if 
$|T| \subseteq |U|$ and 
$<_U \cap \ (|T| \times |T|) = \ <_T$.
\end{definition}

Given a finite tree $T$ on $\omega_1$ and an ordinal $\alpha < \omega_1$, let 
$$
T \restriction \alpha = (|T| \cap \alpha, \ <_T \cap \ (\alpha \times \alpha)),
$$
$$
T \setminus \alpha = (|T| \setminus \alpha, 
\ <_T \cap \ ((\omega_1 \setminus \alpha) \times (\omega_1 \setminus \alpha))).
$$
Note that $T \restriction \alpha$ and $T \setminus \alpha$ are themselves 
finite trees on $\omega_1$.

\begin{definition}
Suppose that $S$ and $T$ are finite trees on $\omega_1$ and 
$\alpha < \omega_1$. 
Assume that $|T| \cap \alpha = \emptyset$ and $|S| \subseteq \alpha$. 
Let $X$ be any set of minimal nodes of $T$ and let $g : X \to |S|$ be any function. 

Define $S \oplus_{X,g} T$ as the pair $(U,<_U)$, where 
$$
|U| = |S| \cup |T|,
$$
and $x <_U y$ if either 
$x <_T y$, $x <_S y$, or there is $a \in X$ such that 
$x \le_S g(a)$ and $a \le_T y$.
\end{definition}

The purpose of this definition is to amalgamate the trees $S$ and $T$ 
in such a way that for all $a \in X$, $a$ is the immediate successor 
of $g(a)$.

\begin{lemma}
Let $S$, $T$, $\alpha$, $X$, and $g$ be as in Definition 7.2. 
Then $S \oplus_{X,g} T$ is a finite tree on $\omega_1$ which 
end-extends $S$ and $T$. 
Moreover, the maximal nodes of $S \oplus_{X,g} T$ are the 
maximal nodes of $T$ together with the maximal nodes of $S$ 
which are not in the range of $g$.
\end{lemma}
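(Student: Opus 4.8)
The plan is to verify in turn that $U := S \oplus_{X,g} T$ satisfies clauses (1)--(4) of Definition 7.1, then that it end-extends $S$ and $T$, and finally to pin down its maximal nodes. The one structural fact that drives everything is that $|S|$ and $|T|$ are disjoint and every ordinal in $|S|$ is strictly smaller than every ordinal in $|T|$, since $|S| \subseteq \alpha$ and $|T| \cap \alpha = \emptyset$. Consequently each of the three clauses in the definition of $<_U$ lives in a fixed region: $x <_T y$ forces $x,y \in |T|$; $x <_S y$ forces $x,y \in |S|$; and the mixed clause (a witness $a \in X$ with $x \le_S g(a)$ and $a \le_T y$) forces $x \in |S|$ and $y \in |T|$. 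In particular one never has $x <_U y$ with $x \in |T|$ and $y \in |S|$. Clause (1) is immediate, and irreflexivity and clause (4) follow at once from this observation, using $x < \alpha \le y$ in the mixed case.

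For transitivity I would split on which of $|S|, |T|$ contain $x, y, z$ when $x <_U y <_U z$; the region restrictions leave only the cases $(S,S,S)$, $(S,S,T)$, $(S,T,T)$, $(T,T,T)$ nonvacuous, and each reduces to transitivity of $<_S$ or of $<_T$ together with light bookkeeping on the $X$-witness (for instance, in case $(S,S,T)$, from $x <_S y \le_S g(a)$ and $a \le_T z$ one gets the mixed relation $x <_U z$ with the same witness $a$). For clause (3) I would fix $c$ with $a, b <_U c$ and split on whether $c \in |S|$ or $c \in |T|$. If $c \in |S|$, both relations are $<_S$-relations and the claim is linearity of $<_S$-predecessors. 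If $c \in |T|$, each of $a <_U c$, $b <_U c$ is either a $<_T$-relation or a mixed one with a witness in $X$; the crucial point is that any $X$-witness $a_0$ with $a_0 \le_T c$ is a minimal node of $T$, so two such witnesses must be $<_T$-incomparable hence equal, after which one reduces to linearity of the $\le_S$-predecessors of their common value $g(a_0)$; one also has to treat the boundary case where a witness equals $c$, which the same minimality argument handles.

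End-extension is then quick: $|S|, |T| \subseteq |U|$ is clear, and for $x, y$ both in $|S|$ (resp.\ both in $|T|$) the region restrictions show $x <_U y$ can only be witnessed by the $<_S$-clause (resp.\ the $<_T$-clause), so the restriction of $<_U$ to $|S| \times |S|$ equals $<_S$ and the restriction to $|T| \times |T|$ equals $<_T$. For the maximal nodes I would prove both containments. A maximal node $c$ of $T$ stays maximal in $U$ since any $c <_U d$ would be a $<_T$-relation; a maximal node $c$ of $S$ with $c \notin \ran(g)$ stays maximal since a $<_S$-relation contradicts maximality in $S$ and a mixed relation $c \le_S g(a)$ would force $c = g(a) \in \ran(g)$. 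Conversely, if $c$ is maximal in $U$ and lies in $|T|$ it is maximal in $T$; if it lies in $|S|$ it is maximal in $S$ and cannot be $g(a)$ for any $a \in X$, since $c = g(a)$ would give $c <_U a$ (witness $a$, using $c \le_S g(a)$ and $a \le_T a$, and $c \ne a$ by disjointness), contradicting maximality.

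The only place I expect any friction is clause (3) in the subcase $c \in |T|$ with both $a, b \in |S|$: here one must juggle two witnesses $a_0, b_0 \in X$ and invoke minimality of nodes of $X$ in $T$ (distinct such nodes are $<_T$-incomparable) to force $a_0 = b_0$, being careful about the subcase where one witness coincides with $c$. Once the disjointness-and-order remark about $|S|$ and $|T|$ is made, everything else is routine.
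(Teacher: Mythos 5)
Your proof is correct, and since the paper dismisses this lemma as ``straightforward,'' there is no alternative route to compare against: you are supplying exactly the verification the paper elides. The organizing observation — that $|S| \subseteq \alpha$ and $|T| \cap \alpha = \emptyset$ force each clause of $<_U$ into a fixed region, so $<_U$ contains no $|T| \to |S|$ pairs — is the right spine for the whole argument, and everything you build on it (irreflexivity, clause (4), transitivity by the four nonvacuous region patterns, end-extension by restriction, and both containments for the maximal nodes) checks out.

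One small thing worth flagging: in clause (3) with $c \in |T|$ there are really three subcases according to whether each of $a <_U c$, $b <_U c$ is a $<_T$-relation or a mixed one, and your ``two witnesses must coincide'' discussion addresses only the mixed--mixed case. The mixed asymmetric case, say $a <_T c$ and $b <_U c$ with witness $b_0 \in X$, needs a slightly different conclusion: since $b_0$ is minimal in $T$ one cannot have $a <_T b_0$, and $b_0 = c$ is ruled out by $a <_T c$, so linearity of $<_T$-predecessors of $c$ forces $b_0 \le_T a$, whence $b \le_S g(b_0)$ and $b_0 \le_T a$ give $b <_U a$ directly from the mixed clause. This is the same minimality-of-$X$ tool you already identified, applied to compare $b_0$ with $a$ rather than with a second witness, so it is an omission of enumeration rather than of an idea — but it is the one remaining obligation (showing $b <_U a$) that your write-up leaves to the reader.
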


\begin{proof}
The proof is straightforward.
\end{proof}

The next lemma will be useful for amalgamating conditions in our forcing 
poset for adding a Kurepa tree.

\begin{lemma}
Let $T$ be a finite tree on $\omega_1$ and let $\alpha < \omega_1$. 
Suppose that $S$ is an end-extension of $T \restriction \alpha$ such that 
$|S| \subseteq \alpha$. 
Let $X$ be a set of minimal nodes of $T \setminus \alpha$, which includes all minimal 
nodes of $T \setminus \alpha$ which are not minimal in $T$. 
If $a \in X$ is not minimal in $T$, let $a^*$ be the immediate predecessor of $a$ in $T$. 

Let $g : X \to |S|$ be a function satisfying that for all $a \in X$:
\begin{enumerate}
\item if $a$ is not minimal 
in $T$, then $a^* \le_S g(a)$, and 
$\{ t \in |T| : a^* <_S t \le_S g(a) \} = \emptyset$;
\item if $a$ is minimal in $T$, 
then $\{ t \in |T| : t \le_S g(a) \} = \emptyset$.
\end{enumerate}

Let $U := S \oplus_{X,g} (T \setminus \alpha)$. 
Then $U$ is a finite tree on $\omega_1$ which 
end-extends $S$ and $T$. 
Moreover, the maximal nodes of $U$ are the 
maximal nodes of $T \setminus \alpha$ 
together with the maximal nodes of $S$ which are not in the range of $g$.
\end{lemma}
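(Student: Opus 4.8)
The plan is to derive almost everything from Lemma 7.3, applied with the second tree taken to be $T \setminus \alpha$. First I would check that $S$, $T \setminus \alpha$, $\alpha$, $X$, and $g$ satisfy the hypotheses of Definition 7.2: $|T \setminus \alpha| = |T| \setminus \alpha$ is disjoint from $\alpha$, $|S| \subseteq \alpha$ by assumption, $X$ is a set of minimal nodes of $T \setminus \alpha$, and $g$ maps $X$ into $|S|$. So Lemma 7.3 applies to $U = S \oplus_{X,g} (T \setminus \alpha)$ and immediately gives that $U$ is a finite tree on $\omega_1$ which end-extends $S$ and $T \setminus \alpha$, and that the maximal nodes of $U$ are the maximal nodes of $T \setminus \alpha$ together with those maximal nodes of $S$ not in the range of $g$. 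This already establishes every part of the conclusion except the claim that $U$ end-extends $T$ itself (as opposed to merely $T \restriction \alpha$ and $T \setminus \alpha$), so the real work is precisely that claim.

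The inclusion $|T| \subseteq |U|$ is routine: since $S$ end-extends $T \restriction \alpha$ we have $|T| \cap \alpha = |T \restriction \alpha| \subseteq |S|$, while $|T| \setminus \alpha = |T \setminus \alpha| \subseteq |U|$, and $|U| = |S| \cup |T \setminus \alpha|$. The substantive step is to verify that $x <_U y$ iff $x <_T y$ for all $x, y \in |T|$. I would unfold $x <_U y$ into the disjunction ``$x <_{T \setminus \alpha} y$, or $x <_S y$, or there is $a \in X$ with $x \le_S g(a)$ and $a \le_{T \setminus \alpha} y$'', and split into four cases according to whether $x$ and $y$ each lie below $\alpha$ or at/above $\alpha$. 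When $x, y < \alpha$ the first and third disjuncts are vacuous (each forces a member of $|T \setminus \alpha|$), so $x <_U y$ iff $x <_S y$, which equals $x <_T y$ because $S$ end-extends $T \restriction \alpha$; when $x, y \ge \alpha$ the second and third disjuncts are vacuous and the first disjunct is literally $<_T$ restricted above $\alpha$; and the case $x \ge \alpha > y$ is vacuous on both sides since $a <_T b$ implies $a < b$. These three cases are pure bookkeeping.

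The main obstacle is the remaining case $x < \alpha \le y$. Here $x <_U y$ can only come from the third disjunct, so fix $a \in X$ with $x \le_S g(a)$ and $a \le_{T \setminus \alpha} y$. For the forward implication I would first rule out $a$ being minimal in $T$: otherwise condition (2) on $g$ makes $\{ t \in |T| : t \le_S g(a) \}$ empty, contradicting that $x \in |T|$ and $x \le_S g(a)$. So $a^*$ is defined, and condition (1) gives $a^* \le_S g(a)$ with no node of $|T|$ strictly $<_S$-between $a^*$ and $g(a)$; applying the tree axiom of $S$ to $x$ and $a^*$, both $\le_S g(a)$, the only surviving option is $x \le_S a^*$ (the option $a^* <_S x$ is killed by the emptiness just mentioned, since $x \in |T|$). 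As $x, a^* < \alpha$ and $S$ end-extends $T \restriction \alpha$, this gives $x \le_T a^*$, and then $x \le_T a^* <_T a \le_T y$ yields $x <_T y$. For the converse, given $x <_T y$ with $x < \alpha \le y$, I would run up the finite $<_T$-chain from $x$ to $y$ and let $a$ be its first member that is $\ge \alpha$; a short argument shows $a$ is minimal in $T \setminus \alpha$ but not minimal in $T$, so $a \in X$ by the hypothesis on $X$, while the immediate $T$-predecessor $a^*$ of $a$ is the preceding member of the chain and lies below $\alpha$, with $x \le_T a^*$. Then $x \le_S a^*$ together with $a^* \le_S g(a)$ from condition (1) gives $x \le_S g(a)$, and $a \le_{T \setminus \alpha} y$ is immediate, so the third disjunct fires and $x <_U y$. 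Combining the four cases finishes the verification, and with it the lemma.
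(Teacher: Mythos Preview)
Your proposal is correct and follows essentially the same route as the paper: invoke Lemma 7.3 to reduce everything to showing that $U$ end-extends $T$, then split on the positions of $x,y$ relative to $\alpha$, with the only substantive case being $x < \alpha \le y$, handled exactly as the paper does via conditions (1) and (2) on $g$. In fact you are slightly more thorough than the paper, which only writes out the implication $x <_U y \Rightarrow x <_T y$ and leaves the converse (your chain argument producing $a \in X$) implicit.
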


\begin{proof}
By Lemma 7.3, $U$ is a finite tree on $\omega_1$ 
which end-extends $T \setminus \alpha$ and $S$, and the maximal nodes 
of $U$ are the maximal nodes of $T \setminus \alpha$ together with the 
maximal nodes of $S$ which are not in the range of $g$. 
It remains to show that $U$ end-extends $T$.

Suppose that $x <_U y$, where $x, y \in |T|$. 
We will show that $x <_T y$. 
If $x$ and $y$ are below $\alpha$, then $x <_S y$, since $U$ end-extends $S$ 
and $|T \restriction \alpha| \subseteq |S|$. 
Since $S$ end-extends $T \restriction \alpha$, it follows that 
$x <_T y$. 
If $x$ and $y$ are both at least $\alpha$, then 
$x <_T y$ since $U$ end-extends $T \setminus \alpha$. 

Assume that $x < \alpha \le y$. 
Then by definition, 
$x \le_S g(a)$ and $a \le_T y$ for some $a \in X$. 
Now $a$ cannot be minimal in $T$, because otherwise by 
assumption (2), 
$\{ t \in |T| : t \le_S g(a) \} = \emptyset$, contradicting the choice of $x$. 
So by assumption (1), 
$x$ and $a^*$ are both below $g(a)$ in $S$ and 
hence are comparable. 
But by assumption (1), we cannot have $a^* <_S x$, 
therefore $x \le_S a^*$. 
Since $S$ end-extends $T \restriction \alpha$ and 
$x$ and $a^*$ are in $T \restriction \alpha$, it follows that $x \le_T a^*$. 
Therefore $x \le_T a^* <_T a \le_T y$, which implies that 
$x <_T y$.
\end{proof}

Given a model $M \in \mathcal X$, let $T \restriction M$ 
denote $T \restriction (M \cap \omega_1)$ and let 
$T \setminus M$ denote $T \setminus (M \cap \omega_1)$. 
Note that if $M \in \mathcal X$ and $\beta \in \Gamma$, then 
$M \cap \omega_1 = (M \cap \beta) \cap \omega_1$, 
so $T \restriction M = T \restriction (M \cap \beta)$ and 
$T \setminus M = T \setminus (M \cap \beta)$.

\bigskip

We are now ready to define our forcing poset for adding an $\omega_1$-Kurepa tree.
While the definition of the forcing poset is fairly simple, unfortunately 
the proofs of the preservation of $\omega_1$ and $\omega_2$ are 
quite involved.

\begin{definition}
Let $\p$ be the forcing poset consisting of triples $( T, F, A )$ satisfying:
\begin{enumerate}
\item $T = ( |T|, <_T )$ is a finite tree on $\omega_1$;
\item $F$ is an injective function from the maximal nodes of $T$ into $\omega_2$;
\item $A$ is a finite adequate set;
\item if $M \in A$, $a$ and $b$ are distinct maximal nodes of $T$, and 
$F(a)$ and $F(b)$ are in $M$, 
then for any $c$ which is below both $a$ and $b$ in $T$, 
$c$ is in $M$.
\end{enumerate}
Let $( U, G, B ) \le (T, F, A )$ 
if $U$ end-extends $T$, $A \subseteq B$, 
and whenever $a$ is maximal in $T$, then there is $b$ which is 
maximal in $U$ such that 
$a \le_U b$ and $F(a) = G(b)$.
\end{definition}

If $p = (T,F,A)$, then we let $T_p := T$, 
$F_p := F$, and $A_p := A$.

Note that if $p$ is a condition, $M_1, \ldots, M_k \in A_p$, 
and $\beta_1, \ldots, \beta_k \in \Gamma$, then 
$(T_p,F_p,A_p \cup \{ M _1 \cap \beta_1, \ldots, M_k \cap \beta_k \})$ is a condition. 
For requirements (1), (2), and (3) in the definition of $\p$ 
are immediate, and (4) is preserved under 
taking initial segments of models.

\bigskip

The proofs that $\p$ preserves $\omega_1$ and $\omega_2$ will take 
some time. 
Let us temporarily assume that $\p$ preserves $\omega_1$ and $\omega_2$, 
and show how the forcing poset $\p$ adds an $\omega_1$-Kurepa tree.
Note that since $\p$ has size $\omega_2$, it preserves cardinals larger 
than $\omega_2$ as well.

\bigskip

Observe that for any ordinal $\alpha < \omega_1$, 
there are densely many $q$ with 
$\alpha \in |T_q|$. 
Indeed, given a condition $p$, if $\alpha$ is not already in $|T_p|$, then let 
$$
T_q = (|T_p| \cup \{ \alpha \},<_{T_p}),
$$
and extend $F_p$ to $F_q$ by letting 
$F_q(\alpha)$ be any value not in the range of $F_p$. 
Then easily $q = (T_q,F_q,A_p)$ is a condition below $p$.

\bigskip

Let $\dot R$ be a $\p$-name such that $\p$ forces that 
$\dot R$ is the set of pairs $( \alpha, \beta )$ 
for which there exists $p \in \dot G$ such that 
$\alpha <_{T_p} \beta$. 
Let $\dot T$ be a $\p$-name for the pair $(\omega_1,\dot R)$. 
It is straightforward to prove that $\p$ forces that 
$\dot T$ is a tree which end-extends $T_p$ for all $p \in \dot G$.

The next two lemmas will establish that $\p$ forces that $\dot T$ is 
an $\omega_1$-Kurepa tree.

\begin{lemma}
The forcing poset $\p$ forces that each level of $\dot T$ is countable.
\end{lemma}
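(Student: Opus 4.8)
The plan is the following. Fix $\gamma < \omega_1$; since $\p$ preserves $\omega_1$, it is enough to show that $\p$ forces the $\gamma$-th level $\mathrm{Lev}_\gamma(\dot T)$ to be bounded below $\omega_1$. Fix a large regular $\theta$ and a countable $M^* \prec H(\theta)$ with $\p,\dot T,\pi,\mathcal X \in M^*$ and $M := M^* \cap \omega_2 \in \mathcal X$, and set $\delta := M \cap \omega_1 = M^* \cap \omega_1$, so that $\gamma < \delta$. I claim that $\p$ forces $\mathrm{Lev}_\gamma(\dot T) \subseteq \delta$; since $\gamma$ was arbitrary, this proves the lemma. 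The claim is proved by induction on $\gamma < \delta$, using throughout that nodes of $\dot T$ are ordinals $< \omega_1$ and that $x <_{\dot T} y$ implies $x < y$ (clause (4) of Definition 7.1).

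For the inductive step, assume $\p$ forces $\mathrm{Lev}_{\gamma'}(\dot T)\subseteq\delta$ for all $\gamma'<\gamma$, and suppose toward a contradiction that $p\in\p$ and $\beta\ge\delta$ satisfy $p\Vdash\check\beta\in\mathrm{Lev}_\gamma(\dot T)$. Every $<_{\dot T}$-predecessor of $\beta$ has height $<\gamma$, so by the inductive hypothesis $p$ forces that every $<_{\dot T}$-predecessor of $\beta$ lies below $\delta$. Extend $p$ to a condition $q$ with $\beta\in|T_q|$ (densely possible, by the remark preceding the lemma), strengthening further to decide whatever part of the branch through $\beta$ is needed below.

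The cases $\gamma=0$, $\gamma$ a successor, the sub-case of $\gamma$ a limit where the $<_{\dot T}$-predecessors of $\beta$ are forced bounded below $\delta$, and the case $\beta>\delta$ (for any $\gamma$), are all treated uniformly. In each, $q$ forces a nonempty half-open interval of ordinals $I\subseteq\beta$ to contain no $<_{\dot T}$-predecessor of $\beta$: one takes $I=[0,\delta)$ when $\gamma=0$ (as $q$ forces $\beta$ to be a root), $I=(\beta^-,\beta)$ when $\gamma=\gamma'+1$ after deciding the immediate predecessor $\beta^-$ of $\beta$ (which lies below $\delta$ by the inductive hypothesis at $\gamma'$), $I=(\eta,\delta)$ in the bounded-limit sub-case after deciding a bound $\eta<\delta$ of the predecessors, and $I=[\delta,\beta)$ when $\beta>\delta$. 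Now pick $c\in I\setminus|T_q|$ and insert $c$ into $T_q$ as a new node placed directly below $\beta$, immediately above all current $<_{T_q}$-predecessors of $\beta$ (unobstructed, since $q$ decided that $I$ is free of predecessors). The resulting triple is again a condition: clauses (1)--(3) of Definition 7.5 are immediate, and clause (4) survives because every maximal node lying above $c$ in the new tree also lies above $\beta$, so any two of them have meet $\ge\beta$; hence for each $M'\in A_q$ with $M'\cap\omega_1\le\beta$ there is at most one maximal node above $\beta$ whose $F_q$-image lies in $M'$ (otherwise their meet, being $\ge\beta\ge M'\cap\omega_1$, would already violate clause (4) for $q$), while for the remaining models $M'$ one has $c<\beta<M'\cap\omega_1$, so $c\in M'$ and clause (4) is untouched. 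This extension forces $c$ to be a $<_{\dot T}$-predecessor of $\beta$ inside $I$, contradicting the decision of $q$.

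The main obstacle is the remaining case: $\gamma$ a limit, $\beta=\delta$, and $q$ forcing the $<_{\dot T}$-predecessors of $\delta$ to be cofinal in $\delta$ with order type $\gamma$; equivalently, one must rule out $\mathrm{ht}_{\dot T}(\delta)<\delta$. Here no forced-empty ordinal slot is available, so the insertion trick breaks down. The route I would take is to reflect the situation into $M^*$ and pass to a strongly $(M^*,\p)$-generic extension (using the amalgamation machinery of Section~4, in particular adjoining $M$ to the side condition once the relevant models lie in $Sk(M)$), and then argue that below such a condition the branch of $\dot T$ lying below $\delta$ is determined inside $M^*[\dot G]$ up to the disjointness forced by clause (4) with the model $M$; since $M^*[\dot G]\cap\omega_1=\delta$, that branch meets only ordinals below $\delta$, yet its order type $\gamma$ together with $\cf(\delta)=\omega$ forces it to reach $\delta$ itself, a contradiction. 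Carrying this out precisely, while respecting the fact that labels of maximal nodes propagate monotonically along branches under extension of conditions, is where the real work of the lemma lies.
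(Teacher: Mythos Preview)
Your proposal has a genuine gap: you explicitly leave the case $\gamma$ limit, $\beta=\delta$, predecessors cofinal in $\delta$ unfinished, saying ``carrying this out precisely\ldots is where the real work of the lemma lies.'' The sketch you offer for that case --- reflecting into $M^*$, invoking strong $(M^*,\p)$-genericity, and arguing that the branch ``reaches $\delta$'' --- is not a proof. In particular, the final line of reasoning is confused: the branch below $\delta$ is a subset of $\delta$ of order type $\gamma<\delta$; nothing about $\cf(\delta)=\omega$ forces such a set to contain $\delta$, and no contradiction is visible. Moreover, the strong properness of this particular $\p$ is Proposition 7.9, proved \emph{after} the present lemma, so you would need to argue non-circularly.

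The deeper issue is strategic. By fixing the countable model $M^*$ (and hence $\delta$) in advance and then trying to prove $\mathrm{Lev}_\gamma(\dot T)\subseteq\delta$ for all $\gamma<\delta$, you back yourself into exactly the corner you describe: when $\beta=\delta$ and the predecessors of $\beta$ are cofinal in $\delta$, there is no forced-empty interval below $\beta$ into which you can insert a new node. The paper avoids this entirely by a minimal-counterexample argument. Assume some $p$ forces that $\alpha$ is the \emph{least} level that is uncountable. Then $p$ forces that the union of all levels below $\alpha$ is countable, hence bounded by some $\gamma<\omega_1$. Since level $\alpha$ is uncountable, there is a node $b$ on level $\alpha$ with $b\ge\gamma+\omega$; extend to $q$ with $b\in|T_q|$. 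Now every $<_{\dot T}$-predecessor of $b$ is forced below $\gamma$, so the infinite interval $[\gamma,b)$ is free of predecessors. Pick $a\in[\gamma,b)\setminus|T_q|$ and insert $a$ as an immediate predecessor of $b$; your own verification of clause (4) (which is correct) shows the result is a condition, and it forces $a$ below $b$ with $a\ge\gamma$, a contradiction. The point is that by choosing $\gamma$ \emph{after} $\alpha$ and then choosing $b$ large relative to $\gamma$, the paper guarantees the gap you could not produce.
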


\begin{proof}
Suppose for a contradiction that there is a condition $p$ 
and an ordinal $\alpha < \omega_1$ 
such that $p$ forces that $\alpha$ is the least ordinal such that 
level $\alpha$ of $\dot T$ is uncountable. 
Then $p$ forces that the set of nodes which belong to a level less than $\alpha$ 
is countable. 
As a result, it is easy to see that there exists 
$q$, $\gamma$, and $b$ satisfying:
\begin{enumerate}
\item $q \le p$;
\item $b \in T_q$;
\item $b \ge \gamma + \omega$;
\item $q$ forces that $b$ is on level $\alpha$ in $\dot T$;
\item $q$ forces that any node of $\dot T$ on a level less than $\alpha$ 
is less than $\gamma$.
\end{enumerate}
Note that for any $\xi$ with $\gamma \le \xi < b$, $q$ forces that 
$\xi$ is not below $b$ in $\dot T$. 
For otherwise as $b$ is on level $\alpha$ by (4), $\xi$ would be on a level less 
than $\alpha$, and hence below $\gamma$ by (5).

Choose an ordinal $a$ such that $\gamma \le a < b$ and $a$ is 
different from any ordinal in $|T_q|$, which is possible since $|T_q|$ is finite. 
Define $T_r$ by letting $|T_r| = |T_q| \cup \{ a \}$, 
and letting $x <_{T_r} y$ if either:
\begin{enumerate}
\item $x <_{T_q} y$, or
\item $x <_{T_q} b$ and $y = a$, or
\item $x = a$ and $b \le_{T_q} y$.
\end{enumerate} 
In other words, we add $a$ so that it is an immediate predecessor of $b$. 
Easily $T_r$ is a tree which end-extends $T_q$. 
Also $T_q$ and $T_r$ have the same maximal nodes.

Let $r = (T_r,F_q,A_q)$. 
We claim that $r$ is a condition. 
Requirements (1), (2), and (3) in the definition of $\p$ are immediate. 
For (4), let $M \in A_r$ and suppose that $d$ and $e$ are distinct maximal 
nodes of $T_r$, $F_r(d)$ and $F_r(e)$ are in $M$, and 
$c <_{T_r} d, e$. 
Note that $d, e \in |T_q|$, since $T_q$ and $T_r$ have the 
same maximal nodes.
 
If $c \in |T_q|$, then $c \in M$ since $q$ is a condition. 
Otherwise $c = a$. 
Since $b$ is the unique immediate successor of $a$, and $d$ and $e$ are distinct, 
we must have that $b <_{T_r} d, e$. 
But then $b \in M$ since $q$ is a condition. 
Since $a < b$, $a \in M$ because $M \cap \omega_1$ is an ordinal. 

So indeed $r$ is a condition. 
Clearly $r \le q$. 
But this is a contradiction since $a \ge \gamma$ and $r$ forces 
that $a$ is below $b$ in $\dot T$.
\end{proof}

\begin{lemma}
The forcing poset $\p$ forces that $\dot T$ has $\omega_2$ many 
distinct branches.
\end{lemma}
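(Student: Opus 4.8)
The plan is to attach to each $\nu < \omega_2$ a $\p$-name $\dot b_\nu$ for a cofinal branch of $\dot T$, to show that $\p$ forces these branches to be pairwise distinct, and then to invoke the preservation of $\omega_2$. First note that for each $\nu < \omega_2$ there are densely many conditions $p$ with $\nu \in \ran(F_p)$: given $q$ with $\nu \notin \ran(F_q)$, choose an ordinal $d < \omega_1$ above $\sup(|T_q|)$, add $d$ to $|T_q|$ as an isolated node, and extend $F_q$ by $F(d) := \nu$; requirement (4) of Definition 7.5 is vacuous for $d$, so this is a condition below $q$. For a condition $p$ with $\nu \in \ran(F_p)$ let $t_\nu(p)$ denote the unique maximal node of $T_p$ with $F_p(t_\nu(p)) = \nu$; by the definition of the ordering, if $q \le p$ and $\nu \in \ran(F_p)$ then $\nu \in \ran(F_q)$ and $t_\nu(p) \le_{T_q} t_\nu(q)$. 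Let $\dot b_\nu$ be a name for the set of ordinals $c < \omega_1$ for which there is $p \in \dot G$ with $\nu \in \ran(F_p)$ and $c \le_{T_p} t_\nu(p)$.

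The next step is to show that $\p$ forces each $\dot b_\nu$ to be a branch, i.e.\ a downward closed chain of $\dot T$ of length $\omega_1$. If $c$ and $c'$ belong to $\dot b_\nu$, witnessed by $p$ and $q$ in $\dot G$, pick $r \in \dot G$ below both; using end-extension together with the fact that the $\nu$-labeled node climbs, both $c$ and $c'$ are $\le_{T_r} t_\nu(r)$, hence $<_{\dot T}$-comparable, and the same reasoning gives downward closure. Since requirement (4) of Definition 7.1 forces $<_{\dot T}$ to refine the ordinal order, $<_{\dot T}$ agrees with $<$ on $\dot b_\nu$, so $\dot b_\nu$ is a genuine (and in fact maximal) branch whose order type as a chain equals its order type as a subset of $\omega_1$. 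For the length, I use that, again by requirement (4) of Definition 7.1, $t_\nu(q) \ge t_\nu(p)$ as ordinals whenever $q \le p$, and that this ordinal can be pushed arbitrarily high. Concretely, for $\gamma < \omega_1$ the set $D^\nu_\gamma := \{\, p : \nu \in \ran(F_p)$ and $t_\nu(p) \ge \gamma \,\}$ is dense: given $q$ with $\nu \in \ran(F_q)$, pick an ordinal $b \in [\gamma,\omega_1) \setminus |T_q|$, add $b$ to $|T_q|$ as an immediate successor of $t_\nu(q)$, and move the label $\nu$ from $t_\nu(q)$ to $b$. The only point requiring care in checking that this is a condition is requirement (4) of Definition 7.5, which holds because the set of common predecessors of $b$ and any other maximal node of the extended tree is exactly the set of common predecessors of $t_\nu(q)$ and that node. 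Since each $D^\nu_\gamma$ is dense, $\dot b_\nu$ is forced to be unbounded in $\omega_1$, hence to have order type $\omega_1$; in particular $\dot T$ is forced to have height $\omega_1$.

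For distinctness, fix $\nu \ne \nu'$ in $\omega_2$ and a condition $p_0$, and extend $p_0$ to $p_1$ with $\nu, \nu' \in \ran(F_{p_1})$. Then $t_\nu(p_1)$ and $t_{\nu'}(p_1)$ are distinct maximal nodes of $T_{p_1}$, hence $<_{T_{p_1}}$-incomparable. I claim that $p_1$ forces $t_\nu(p_1) \in \dot b_\nu \setminus \dot b_{\nu'}$: membership in $\dot b_\nu$ is immediate, and if some $r \le p_1$ in the generic witnessed $t_\nu(p_1) \in \dot b_{\nu'}$, then $t_\nu(p_1) \le_{T_r} t_{\nu'}(r)$, while $r \le p_1$ gives $t_{\nu'}(p_1) \le_{T_r} t_{\nu'}(r)$, so $t_\nu(p_1)$ and $t_{\nu'}(p_1)$ would be $<_{T_r}$-comparable, hence $<_{T_{p_1}}$-comparable by end-extension, a contradiction. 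Thus the set of conditions forcing $\dot b_\nu \ne \dot b_{\nu'}$ is dense, so $\p$ forces $\dot b_\nu \ne \dot b_{\nu'}$ for all $\nu \ne \nu'$. Since $\p$ preserves $\omega_2$, it follows that $\p$ forces $\{\, \dot b_\nu : \nu < \omega_2 \,\}$ to be a family of $\omega_2$ many distinct branches of $\dot T$.

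The main obstacle is the length-$\omega_1$ argument. One has to recognize that the ordinal value of the $\nu$-labeled node is nondecreasing along the generic — this is where requirement (4) of Definition 7.1 (that $<_T$ refines the ordinal order on finite trees) does the real work — and that this value can always be raised, and then carry out the routine but slightly delicate verification that the tree extension effecting the raise still satisfies requirement (4) of Definition 7.5. The chain, downward closure, and distinctness arguments are then straightforward once the names $\dot b_\nu$ have been set up.
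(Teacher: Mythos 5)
Your proposal is correct and takes essentially the same approach as the paper: define $\dot b_\nu$ by tracking the unique $\nu$-labeled maximal node of conditions in the generic filter, show it is a cofinal chain by a density argument that climbs the $\nu$-labeled node (handled by the same two-case tree extension), and establish distinctness via incomparability of distinct maximal nodes. The differences are cosmetic — your introduction of the notation $t_\nu(p)$ and explicit dense sets $D^\nu_\gamma$, and the short verification that relabeling preserves requirement (4) of Definition 7.5, which the paper leaves to the reader.
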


\begin{proof}
For each $i < \omega_2$, let 
$\dot b_i$ be a name such that $\p$ forces that 
$a \in \dot b_i$ iff 
for some $p \in \dot G$, there is a maximal node $b$ of $T_p$ such that 
$a \le_{T_p} b$ and $F_p(b) = i$.
We will prove that $\p$ forces that $\langle \dot b_i : i < \omega_2 \rangle$ 
is a sequence of distinct branches of $\dot T$ each of length $\omega_1$. 

Let $G$ be a generic filter on $\p$, 
and let $T := \dot T^G$ and $b_i := \dot b_i^G$. 
To show that $b_i$ is a chain, suppose that $\alpha$ and $\beta$ are in $b_i$, 
and we will 
show that they are comparable in $T$.

Fix $p$ and $q$ in $G$ such that there are maximal nodes $b$ and $c$ 
of $T_p$ and $T_q$ above $\alpha$ and $\beta$ respectively such that 
$F_p(b) = F_q(c) = i$. 
Fix $r$ in $G$ below $p$ and $q$. 
By the definition of the ordering on $\p$, 
there are maximal nodes $b'$ and $c'$ above $b$ and $c$ respectively in $T_r$ 
such that $F_r(b') = F_p(b) = i$ and $F_r(c') = F_q(c) = i$. 
Since $F_r$ is injective, $b' = c'$. 
Hence $\alpha$ and $\beta$ are below the same node in $T_r$, and therefore 
since $T_r$ is a tree, they are comparable in $T_r$, and hence in $T$.

To show that $b_i$ has length $\omega_1$, 
it is enough to show that there are 
cofinally many $\alpha$ in $\omega_1$ which are in $b_i$. 
By a density argument, it suffices to show that whenever $p \in \p$ 
and $\gamma < \omega_1$, there is $q \le p$ 
and $a \ge \gamma$ such that $a$ is a maximal node of 
$T_q$ and $F_q(a) = i$.

Fix $\alpha$ such that $\gamma < \alpha < \omega_1$ and $\alpha$ is larger 
than all the ordinals in $|T_p|$. 
If there does not exist a maximal node $b$ in $T_p$ such that $F_p(b) = i$, 
then let $T_q = (|T_p| \cup \{ \alpha \}, <_{T_p})$ and 
$F_q = F_p \cup \{ (\alpha,i) \}$. 
Then $q = (T_q,F_q,A_p)$ is as desired.

Now suppose that there is a maximal node $b$ in $T_p$ such that 
$F_p(b) = i$. 
Then define $T_q$ by adding $\alpha$ as an immediate successor of $b$. 
Extend $F_p$ to $F_q$ by letting $F_q(\alpha) = i$. 
It is easy to check that $q = (T_q,F_q,A_p)$ is a condition, and clearly 
$q$ is as desired.

Finally, we show that if $i \ne j$ then $b_i$ and $b_j$ are distinct. 
The argument in the previous two paragraphs shows that given a condition $p$, 
we can extend $p$ to $q$ so that 
there are maximal nodes $a$ and $b$ of 
$T_q$ such that $F_q(a) = i$ and $F_q(b) = j$. 
Then $q$ forces that $a \in \dot b_i$ and $b \in \dot b_j$.

We claim that $q$ forces that $a \notin \dot b_j$. 
This implies that $q$ forces that $\dot b_i \ne \dot b_j$, which finishes the proof. 
Otherwise there is $r \le q$ and a maximal node $c$ of $T_r$ such that 
$a \le_{T_r} c$ and $F_r(c) = j$. 
Since $r \le q$, there is a maximal node $d$ of $T_r$ such that 
$b \le_{T_r} d$ and $F_r(d) = F_q(b) = j$. 
As $F_r$ is injective, $c = d$. 
But then $a$ and $b$ are both below $c$ in $T_r$, which implies 
that they are comparable 
in $T_r$. 
Hence they are comparable in $T_q$ since $T_r$ end-extends $T_q$. 
This is a contradiction since $a$ and $b$ are distinct maximal nodes of $T_q$.
\end{proof}

We now turn to showing that $\p$ preserves $\omega_1$ and $\omega_2$. 
For the preservation of $\omega_1$, it will be useful to first describe 
a dense subset of conditions which will help in the amalgamation argument.

\begin{lemma}
Let $p$ be a condition and let $N \in A_p$. 
Then there exists $r \le p$ satisfying:
\begin{enumerate}
\item $T_r$ has no maximal nodes which are less than $N \cap \omega_1$; 
\item the function which sends a minimal node of 
$T_r \setminus N$ to its immediate predecessor in $T_r$, if it exists, 
is injective and its range is an antichain.
\end{enumerate}
\end{lemma}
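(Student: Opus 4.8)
The plan is to build $r$ from $p$ in two stages, each stage only modifying the tree component $T_p$ (and adjusting $F_p$ accordingly), while leaving $A_p$ untouched; since condition (4) in the definition of $\p$ is preserved under end-extensions that do not create new comparabilities between nodes whose $F$-values land in a common model, the main work is purely combinatorial bookkeeping about finite trees on $\omega_1$. First I would handle (1): let $\delta := N \cap \omega_1$. For each maximal node $a$ of $T_p$ with $a < \delta$, I want to ``lift'' $a$ above $\delta$. Concretely, pick for each such $a$ a fresh ordinal $a' \geq \delta$, all distinct from each other and from $|T_p|$, and form the tree $T'$ on $|T_p| \cup \{a' : a \text{ maximal in } T_p,\ a < \delta\}$ in which $a'$ is declared an immediate successor of $a$ (and otherwise inherits $<_{T_p}$). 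Then $T'$ end-extends $T_p$, the maximal nodes of $T'$ are the old maximal nodes $\geq \delta$ together with the new nodes $a'$, all of which are $\geq \delta$, and I extend $F_p$ to $F'$ by setting $F'(a') := F_p(a)$ and dropping $a$ from the domain. Because each $a'$ lies above exactly one old node below $\delta$, no new pair of maximal nodes becomes comparable below any node, so (4) is preserved; thus $(T', F', A_p) \leq p$ and has no maximal nodes below $\delta$.

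Next I would arrange (2) while keeping (1). Let $q := (T', F', A_p)$ from the previous paragraph; note $|T'| \cap \omega_1$ may still have several minimal nodes of $T' \setminus N$ sharing an immediate predecessor, or having no predecessor (minimal in $T'$). For each minimal node $b$ of $T' \setminus N$ that has an immediate predecessor $b^*$ in $T'$, and for each ``collision'' (two such $b$'s with the same $b^*$, or a $b^*$ that is $<_{T'}$-comparable to another such predecessor), I splice in fresh ordinals between $\delta$ and the relevant $b$'s to separate them: pick distinct fresh ordinals $c_b \in [\delta, b)$, redeclare $c_b$ to be the immediate successor of $b^*$ and the immediate predecessor of $b$ (so the map $b \mapsto (\text{immediate predecessor of } b \text{ in the new tree})$ becomes $b \mapsto c_b$, which is injective), and choose the $c_b$'s to form a $<$-antichain, which is possible since there are only finitely many of them and each can be placed just below its target $b$ independently. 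For minimal nodes $b$ of $T' \setminus N$ that are minimal in $T'$, there is nothing to fix on the predecessor side — they contribute no value to the function in (2). Again each spliced $c_b$ lies above exactly the chain through $b^*$, so no two maximal nodes gain a new common lower bound, and (4) survives; and since all $c_b \geq \delta$, condition (1) is untouched. Call the result $r$; then $r \leq q \leq p$.

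The main obstacle I anticipate is verifying condition (4) cleanly at each splicing step: I must check that whenever two distinct maximal nodes $a, b$ of the new tree have $F(a), F(b)$ in some $M \in A_r = A_p$, every common predecessor $c$ lies in $M$. The point is that any common predecessor $c$ of $a$ and $b$ in the new tree is either a common predecessor already present in $T_p$ — handled because $p$ is a condition — or one of the newly introduced ordinals $a'$ or $c_b$; but a newly introduced ordinal has a \emph{unique} node immediately above it by construction, so it can be a common predecessor of two \emph{distinct} maximal nodes only if that unique successor already is, pushing us back to an old common predecessor, and since $M \cap \omega_1$ is an ordinal and the new ordinal is below that old predecessor, membership in $M$ follows. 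I would state this ``unique-successor'' property as the key structural feature of the construction (it is exactly what Lemma 7.3 and Lemma 7.4 are designed to track when one amalgamates trees), and then (4) for $r$ reduces to (4) for $p$ together with the observation that $M \cap \omega_1 \in \omega_1$. The injectivity and antichain requirements in (2) are then immediate from how the $c_b$ were chosen.
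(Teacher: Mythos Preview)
Your first stage matches the paper's approach and is correct. The gap is in your second stage: you place the splicing ordinals $c_b$ in $[\delta, b)$, i.e.\ \emph{above} $\delta = N \cap \omega_1$. But then $c_b \in |T_r| \setminus \delta$, and since $c_b <_{T_r} b$, the node $b$ is no longer minimal in $T_r \setminus N$. The new minimal nodes of $T_r \setminus N$ are the $c_b$'s themselves, and \emph{their} immediate predecessors are the original $b^*$'s --- so the predecessor map on minimal nodes of $T_r \setminus N$ is still $c_b \mapsto b^*$, with exactly the same collisions and comparabilities you were trying to eliminate. Your parenthetical ``so the map $b \mapsto (\text{predecessor of } b)$ becomes $b \mapsto c_b$'' is true as a statement about $b$, but irrelevant: the lemma asks about the predecessor map on \emph{minimal} nodes of $T_r \setminus N$, and $b$ is no longer one of those.

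The fix is to splice on the other side of $\delta$: for each minimal node $a$ of $T_q \setminus N$ with predecessor $a^*$, choose a fresh $g(a) \in (a^*, \delta)$ (possible since $\delta = N \cap \omega_1$ is a countable limit ordinal and the tree is finite), with $g$ injective, and make $g(a)$ the immediate predecessor of $a$ sitting above $a^*$. This is exactly what the paper does, invoking Lemma 7.4: build $S$ end-extending $T_q \restriction N$ by adding each $g(a)$ above $a^*$, then set $T_r := S \oplus_{X,g} (T_q \setminus N)$. Now the minimal nodes of $T_r \setminus N$ are unchanged, their predecessors are the distinct incomparable $g(a)$'s, and (2) follows; since the $g(a)$'s are not maximal, (1) is preserved. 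Your verification of condition (4) via the ``unique successor above each new node'' observation is the right idea and is essentially how the paper handles it.
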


\begin{proof}
Let $c_1, \ldots, c_m$ denote the maximal nodes of $T_p$ which 
are below $N \cap \omega_1$. 
Choose distinct ordinals $\beta_1, \ldots, \beta_m$ in $\omega_1$ which are 
larger than $N \cap \omega_1$ and larger than all ordinals 
appearing in $T_p$. 

We define $q = (T_q,F_q,A_q)$ as follows. 
Extend $T_p$ to $T_q$ by placing $\beta_i$ as the immediate 
successor of $c_i$, for each $i = 1, \ldots, m$. 
Let $F_q(\beta_i) := F_p(c_i)$, for each $i = 1, \ldots, m$. 
If $a$ is a maximal node of $T_q$ different from the $\beta_i$'s, 
then $a$ is a maximal node of $T_p$ and $a \ge N \cap \omega_1$; 
in that case, let $F_q(a) := F_p(a)$.
Let $A_q := A_p$.
The proof that $q$ is a condition below $p$ is straightforward, 
and $q$ clearly satisfies (1).

We further extend $q$ to $r$ which satisfies both (1) and (2). 
Let $X$ be the set of minimal nodes of $T_q \setminus N$ which are not 
minimal in $T_q$. 
For each $a \in X$, let $a'$ be the immediate predecessor of $a$ in $T_q$. 
Now choose for each $a \in X$ some ordinal $g(a)$ in $N$ larger than $a'$ and 
different from the ordinals in $T_q$. 
We also choose the values for $g$ so that $g$ is injective. 
This is possible since $|T_q|$ is finite. 
Let $S$ be the tree 
obtained from $T_q \restriction N$ by adding $g(a)$ above $a'$ 
for each $a \in X$. 

Now clearly $T_q$, $N \cap \omega_1$, $S$, $X$, and $g$ satisfy the 
assumptions of Lemma 7.4. 
So we can define $T_r$ as the tree 
$S \oplus_{X,g} (T_q \setminus N)$, which amalgamates 
$S$ and $T_q$. 
Since $T_q$ has no maximal nodes below $N \cap \omega_1$, 
every maximal node of $S$ is in the range of $g$. 
By Lemma 7.4, it follows that $T_q$ and $T_r$ have the same maximal nodes. 
So we can define $F_r := F_q$. 
Let $A_r := A_q$.

Since $T_q$ and $T_r$ have the same maximal nodes 
and $T_q$ satisfies property (1), also $T_r$ satisfies property (1).
For property (2), if $a$ is a minimal node of $T_r \setminus N$ which is 
not minimal in $T_r$, then $a \in X$, and 
the immediate predecessor of $a$ in $T_r$ is $g(a)$. 
Since $g(a)$ and $g(b)$ are distinct and incomparable in $T_r$, for any distinct 
$a$ and $b$ in $X$, it follows that $T_r$ satisfies property (2).

It remains to show that $r = (T_r,F_r,A_r)$ is a condition. 
Requirements (1), (2), and (3) in the definition of $\p$ are immediate. 
For (4), let $M \in A_r$, and suppose that $c <_{T_r} a, b$, where 
$a, b$ are maximal in $T_r$ and $F_r(a), F_r(b) \in M$. 
We will show that $c$ is in $M$. 
Since $T_q$ and $T_r$ have the same maximal nodes, $a$ and $b$ are in $T_q$.

First, assume that $c$ is in $|T_q|$. 
Then since $T_r$ end-extends $T_q$, $c <_{T_q} a, b$. 
Since $M \in A_r$ and $A_q = A_r$, we have that $M \in A_q$. 
Also $F_r = F_q$, so $F_q(a), F_q(b) \in M$. 
Since $q$ is a condition, it follows that $c \in M$.

Secondly, assume that $c$ is not in $|T_q|$. 
Then $c = g(x)$, for some $x \in X$. 
By the definition of $T_r$, we have that $x \le_{T_q} a, b$. 
Note that it is impossible that $x$ is equal to $a$ or $b$, 
since otherwise $a$ and $b$ would be comparable, which contradicts 
that $a$ and $b$ are distinct maximal nodes of $T_q$. 
So in fact $x <_{T_q} a, b$. 
Therefore by the previous paragraph, $x \in M$. 
Since $c < x$ and $x \in M \cap \omega_1$, it follows that $c \in M$. 
\end{proof}

\begin{proposition}
The forcing poset $\p$ is strongly proper on a stationary set.
\end{proposition}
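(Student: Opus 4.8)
The plan is to follow the template established in the proofs of Propositions 4.2, 5.2, and 6.2, but with the tree components of conditions amalgamated using Lemmas 7.3 and 7.4. Fix a regular $\theta > \omega_2$ with $\p \in H(\theta)$, and let $N^*$ be a countable elementary substructure of $H(\theta)$ with $\p, \pi, \mathcal X \in N^*$ and $N := N^* \cap \omega_2 \in \mathcal X$; there are stationarily many such $N^*$ in $P_{\omega_1}(H(\theta))$ since $\mathcal X$ is stationary. As in the earlier proofs, $N^* \cap H(\omega_2) = \pi[N] = Sk(N)$ by Lemma 1.3, so $N^* \cap \p \subseteq Sk(N)$. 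Given $p \in N^* \cap \p$, the first step is to pass to the condition $p_0 := (T_p, F_p, A_p \cup \{ N \})$, which lies below $p$; it is a condition by Lemma 3.5, and it satisfies clause (4) for the model $N$ because $T_p \in Sk(N)$ forces $|T_p| \subseteq N \cap \omega_1$, so every common predecessor in $T_p$ already lies in $N$. Then I would apply Lemma 7.10 to $p_0$ and $N$ to obtain a condition $q \le p_0$ with $N \in A_q$ such that $T_q$ has no maximal node below $N \cap \omega_1$ and the map sending each minimal node of $T_q \setminus N$ to its immediate predecessor in $T_q$ is injective with antichain range. The claim is that $q$ is strongly $(N^*, \p)$-generic, which finishes the proof.

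To prove the claim, fix a set $D$ which is dense in $N^* \cap \p$ and a condition $r \le q$; the goal is to produce a member of $D$ compatible with $r$. Since $N \in A_r$, I would first apply Lemma 7.10 again, this time to $r$ and $N$, to get $r_1 \le r$ enjoying the same two tree properties relative to $N$, and then extend $r_1$ to $s$ by keeping $T_s := T_{r_1}$ and $F_s := F_{r_1}$ and setting
$$
A_s := A_{r_1} \cup \{ M \cap \beta_{M,N} : M \in A_{r_1}, \ M \cap \beta_{M,N} \in Sk(N) \}.
$$
By Proposition 3.4, $A_s$ is adequate and $N$-closed, and $s$ is a condition since clause (4) is preserved under taking initial segments of models; as $s \le r$, it suffices to find a member of $D$ compatible with $s$.

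Next I would project $s$ into $N^*$: let $u := ( T_s \restriction N, F_u, A_s \cap Sk(N) )$, where $F_u$ is an injection from the maximal nodes of $T_s \restriction N$ into $N \cap \omega_2$ taking values above the supremum of $\bigcup (A_s \cap Sk(N))$, which makes clause (4) vacuous for $u$ and guarantees $u \in N^* \cap \p$. Using the elementarity of $N^*$ — and, as in the proof of Proposition 5.2, an auxiliary model $K \in \mathcal X \cap N^*$ with $u \in Sk(K)$ and $\bigcup \{ R_M(N) : M \in A_s \} \subseteq K$, recorded into the side condition together with the sets $K \cap \zeta$ for $\zeta$ in the relevant remainder sets — I would find $v \le u$ in $N^*$ with enough structure in $T_v$ and $A_v$ to carry out the amalgamation. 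Then fix $w \le v$ in $D$, so $w \in Sk(N)$ and $|T_w| \subseteq N \cap \omega_1$, and amalgamate $w$ with $s$: the tree components are glued by applying Lemma 7.4 with $T := T_s$, $\alpha := N \cap \omega_1$, $S := T_w$, $X$ the set of minimal nodes of $T_s \setminus N$ that are not minimal in $T_s$, and $g$ sending such a node to its immediate $T_s$-predecessor (whose validity in Lemma 7.4(1) is automatic, the distinctness and incomparability of these predecessors being exactly the content of Lemma 7.10(2) for $s$); the function components are glued by using $F_s$ on the maximal nodes inherited from $T_s \setminus N$ and $F_w$ on the remaining maximal nodes; and the side conditions are glued by Proposition 3.9, using that $A_s$ is $N$-closed. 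One then checks that the resulting triple $z$ is a condition below $w$ and $s$.

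The main obstacle will be verifying clause (4) in the definition of $\p$ for the amalgamated condition $z$: given $M \in A_z$ and distinct maximal nodes $a, b$ of $T_z$ with $F_z(a), F_z(b) \in M$, one must show that every common predecessor of $a$ and $b$ in $T_z$ lies in $M$. The delicate cases are when $M \in A_s \setminus Sk(N)$ and one of $a, b$ is a maximal node of $T_w$ while the other comes from $T_s \setminus N$, and when $a$ and $b$ both come from $T_s \setminus N$ but their meet in $T_z$ runs through a node of $T_w$ that is not in $T_s$. These are handled by combining: the two structural properties of $T_s$ guaranteed by Lemma 7.10 (which separate the predecessors of minimal nodes of $T_s \setminus N$ and prevent maximal nodes below $N \cap \omega_1$); the fact that $F_w$ takes values in $N$, so that $F_z(a) \in M \cap N$ for a node $a$ from $T_w$; Proposition 2.6, which controls common elements of $M$ and $N$ above $\beta_{M,N}$; and the remainder-set bookkeeping built into $A_v$. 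The technical heart of the argument is to reduce each case either to the fact that $s$ and $w$ are themselves conditions, or to clause (4) for the model $M \cap \beta_{M,N} \in A_w$, exactly as in the amalgamation arguments of Sections 5 and 6.
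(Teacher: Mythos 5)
Your high-level template (fix $N^*$ with $N := N^*\cap\omega_2 \in \mathcal X$, add $N$ to the side condition, close under $M \cap \beta_{M,N}$, project into $N^*$, reflect via elementarity, amalgamate with Lemma 7.4 and Proposition 3.9) matches the paper, and you correctly identify that the hard part is clause (4). But the proposal has three substantive gaps, and the reflection step you describe is the wrong one for this forcing.

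First, the reflected condition $v$ needs to contain, for each $M_i \in A_s$ with $M_i \cap \beta_{M_i,N} \in Sk(N)$ and $M_i \setminus \beta_{M_i,N} \neq \emptyset$, a model $L_i \in A_v$ that \emph{end-extends} $M_i \cap \beta_{M_i,N}$, with the property that whenever $a$ is maximal in $T_s$ and $F_s(a) \in M_i \setminus N$, the corresponding node $\sigma(a)$ of $T_v$ satisfies $F_v(\sigma(a)) \in L_i \setminus (M_i \cap \beta_{M_i,N})$. These $L_i$'s are what make clause (4) go through in the case $F_z(a), F_z(b) \in M \setminus N$: one shows the meet is below $\sigma(a), \sigma(b)$ in $T_w$, both of whose $F_w$-values land in $L_i$, hence by clause (4) for $w$ the meet lies in $L_i \cap \omega_1 = M_i \cap \omega_1$. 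Without the $L_i$'s, reduction to ``clause (4) for $M \cap \beta_{M,N} \in A_w$'' fails whenever one of the $F$-values lies outside $N$. The auxiliary $K$ with $\bigcup R_M(N) \subseteq K$ and the sets $K \cap \zeta$ that you import from Proposition 5.2 are the wrong tool here; remainder-set bookkeeping handles interval constraints, not end-extension of models, and it gives you nothing in the Kurepa tree setting.

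Second, the projection $u$ you define does not carry enough information. You take only $T_s \restriction N$ with a fresh injection $F_u$, but the paper reflects the \emph{entire isomorphism type} of $T_s$ over $T_s \restriction N$ (with the markers $P_i$, $S$, $\xi_j$), producing $v \in N^*$ and an isomorphism $\sigma : T_s \to T_v$ that is the identity on $T_s \restriction N$, with $F_v(\sigma(x)) = F_s(x)$ whenever $F_s(x) \in N$. This last requirement is essential and is directly incompatible with your choice of $F_u$ taking arbitrary large values: if it is dropped, you cannot define $F_z$ consistently, because a maximal node $d$ of $T_s$ with $F_s(d) \in N$ and a maximal node $\sigma^+(d)$ of $T_w$ with $F_w(\sigma^+(d)) = F_v(\sigma(d))$ will end up with no coherent common $F_z$-value.

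Third, your $X$ and $g$ for Lemma 7.4 are incomplete. You take $X$ to be only the non-$T_s$-minimal minimal nodes of $T_s \setminus N$ and let $g$ send each to its $T_s$-predecessor. The paper also puts into $X$ every minimal node $a$ of $T_s \setminus N$ that lies below a maximal node $d$ with $F_s(d) \in N$ (even if $a$ is $T_s$-minimal), and on those it defines $g(a) := \sigma^+(d)$, the unique maximal node of $T_w$ above $\sigma(d)$ with $F_w(\sigma^+(d)) = F_v(\sigma(d))$. This is forced on you by the requirement $z \le w$ together with injectivity of $F_z$: if $\sigma^+(d)$ remained maximal in $T_z$ while $d$ also became maximal with $F_z(d) = F_s(d) = F_w(\sigma^+(d))$, injectivity would fail; so $\sigma^+(d)$ must be absorbed into the range of $g$. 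Your uniform choice $g(a) = a^*$ does not accomplish this and does not even make sense when $a$ is $T_s$-minimal. (Also, a small slip: the dense-set lemma you invoke is Lemma 7.8 in the paper, and applying it once to $r$ suffices; the strongly generic condition $q$ itself need not have the two tree properties.)
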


\begin{proof}
Fix $\theta > \omega_2$ regular. 
Let $N^*$ be a countable elementary substructure of $H(\theta)$ satisfying 
that $\p, \pi, \mathcal X \in N^*$ 
and $N := N^* \cap \omega_2 \in \mathcal X$. 
Note that since $\mathcal X$ is stationary, there are stationarily many 
such $N^*$ in $P_{\omega_1}(H(\theta))$. 
To prove the proposition, it suffices to show that every condition in 
$N^* \cap \p$ has a strongly $(N^*,\p)$-generic extension. 

Observe that since $\pi \in N^*$ and $\pi : \omega_2 \to H(\omega_2)$ 
is a bijection, by elementarity we have that 
$$
N^* \cap H(\omega_2) = \pi[N^* \cap \omega_2] = \pi[N] = Sk(N),
$$
where the last equality holds by Lemma 1.3 
and the fact that 
$N \in \mathcal X$ implies that $Sk(N) \cap \omega_2 = N$. 
In particular, $N^* \cap \p \subseteq Sk(N)$.

Fix $p \in N^* \cap \p$. 
Then as just noted, $p \in Sk(N)$. 
Define 
$$
q := (T_p,F_p,A_p \cup \{ N \}).
$$
Then easily $q$ is a condition, and $q \le p$. 
We will show that $q$ is strongly $(N^*,\p)$-generic. 
Fix a set $D$ which is a dense subset of $N^* \cap \p$, 
and we will show that $D$ is predense below $q$.

\bigskip

Let $r \le q$ be given. 
We will find a condition in $D$ which is compatible with $r$. 
Applying Lemma 7.8, 
we can fix $r' \le r$ satisfying that $T_{r'}$ has no maximal nodes 
below $N \cap \omega_1$, and the function which sends a minimal node of 
$T_{r'} \setminus N$ to its immediate predecessor, if it exists, is injective and its 
range is an antichain.

We extend $r'$ to prepare for intersecting with $N^*$. 
Define $s$ by letting 
$T_s := T_{r'}$, $F_s := F_{r'}$, and 
$$
A_s := A_{r'} \cup \{ M \cap \beta_{M,N} : M \in A_{r'}, \ 
M \cap \beta_{M,N} \in Sk(N) \}.
$$
By the comments after Definition 7.5, $s$ is a condition, and obviously 
$s \le r'$. 
Moreover, it is easy to see that 
$s$ satisfies properties (1) and (2) of Lemma 7.8, since $r'$ does. 
As $s \le r$, we will be done if we can find a condition in $D$ which is 
compatible with $s$.

Let $M_1, \ldots, M_k$ enumerate the sets $M$ in $A_s$ such that 
$M \cap \beta_{M,N} \in Sk(N)$ and $M \setminus \beta_{M,N} \ne \emptyset$.

To find a condition in $D$ which is compatible with $s$, we first 
need to find a condition 
in $N^*$ which reflects some information about $s$. 

\bigskip

\noindent \textbf{Main Claim.} 
There exists a condition $v \in N^*$ satisfying:
\begin{enumerate}
\item there is an isomorphism $\sigma : T_s \to T_v$ which is the 
identity on $T_s \restriction N$;
\item for all $y \in T_s \setminus N$ and $i = 1, \ldots, k$, 
$\sigma(y) > M_i \cap \omega_1$;
\item if $x$ is maximal in $T_s$ and $F_s(x) \in N$, then 
$F_v(\sigma(x)) = F_s(x)$;
\item there are $L_1, \ldots, L_k$ in $A_v$ 
such that $L_i$ end-extends $M_i \cap \beta_{M_i,N}$ 
for each $i = 1, \ldots, k$;
\item for each maximal node $a$ of $T_s$ and each $i = 1, \ldots, k$, 
if $F_s(a) \in M_i \setminus N$, then 
$F_v(\sigma(a)) \in L_i \setminus (M_i \cap \beta_{M_i,N})$;
\item $A_s \cap N^* \subseteq A_v$.
\end{enumerate}

\bigskip

We prove the claim. 
Let $\alpha_1, \ldots, \alpha_m$ and $\beta_1, \ldots, \beta_n$ 
list the elements of $|T_s| \cap N$ and $|T_s| \setminus N$ respectively in ordinal 
increasing order. 
Define sets $P_1, \ldots, P_k$ which are subsets of $\{ 1, \ldots, n \}$ by letting 
$j \in P_i$ if $\beta_j$ is maximal in $T_s$ and $F_s(\beta_j) \in M_i \setminus N$. 
Let $S$ be the set of $j \in \{ 1, \ldots, n \}$ such that 
$\beta_j$ is maximal in $T_s$ and $F_s(\beta_j) \in N$. 
For each $j \in S$ let $\xi_j := F_s(\beta_j)$, which by definition is a member of $N$. 
Let $\Sigma$ be an integer which codes the isomorphism type of the finite structure 
$$
(|T_s|, <_{T_s}, \alpha_1, \ldots, \alpha_m, \beta_1, \ldots, \beta_n).
$$

The objects $s$, $\beta_1, \ldots, \beta_n$, and $M_1, \ldots, M_k$ witness 
that there is $v \in \p$, $\gamma_1, \ldots, \gamma_n$, and 
$L_1, \ldots, L_k$ satisfying:
\begin{enumerate}
\item[(i)] $\gamma_1, \ldots, \gamma_n$ is an increasing sequence of ordinals 
larger than $\alpha_1, \ldots, \alpha_m$ and larger 
than $(M_1 \cap \beta_{M_1,N}) \cap \omega_1, 
\ldots, (M_k \cap \beta_{M_k,N}) \cap \omega_1$ 
such that the structure 
$$
(|T_v|, <_{T_v}, \alpha_1, \ldots, \alpha_m, \gamma_1, \ldots, \gamma_n)
$$
has isomorphism type $\Sigma$;
\item[(ii)] $L_1, \ldots, L_k$ are in $A_v$ and for each $i = 1, \ldots, k$, 
$L_i$ end-extends $M_i \cap \beta_{M_i,N}$;
\item[(iii)] for each $i =1, \ldots, k$, $j \in P_i$ iff $\gamma_j$ is maximal in $T_v$ 
and $F_v(\gamma_j) \in L_i \setminus (M_i \cap \beta_{M_i,N})$;
\item[(iv)] for all $j \in S$, $\gamma_j$ is maximal in $T_v$ and 
$F_v(\gamma_j) = \xi_j$;
\item[(v)] $A_s \cap N^* \subseteq A_v$.
\end{enumerate}
Now the parameters which appear in the above statement, namely, 
$\p$, $\alpha_1, \ldots, \alpha_m$, $M_1 \cap \beta_{M_1,N}, \ldots, 
M_k \cap \beta_{M_k,N}$, $\omega_1$, $\Sigma$, $P_1, \ldots, P_k$, 
$S$, $\langle \xi_j : j \in S \rangle$, and $A_s \cap N^*$, are all members of $N^*$. 
So by the elementarity of $N^*$, we can fix 
$v \in \p$, $\gamma_1, \ldots, \gamma_n$, and 
$L_1, \ldots, L_k$ which are members of $N^*$ and 
satisfy the same statement.

\bigskip

Let us show that $v$ is as required. 
We know that $v$ is in $N^* \cap \p$. 
Requirement (4) in the claim follows from (ii), 
and (6) follows from (v).
 
Define $\sigma : T_s \to T_v$ by letting $\sigma(\alpha_i) := \alpha_i$ for 
$i = 1, \ldots, m$, and $\sigma(\beta_j) := \gamma_j$ for $j = 1, \ldots, n$. 
Then by the choice of $\Sigma$, $\sigma$ is an isomorphism, and 
$\sigma$ is the identity on $T \restriction N$. 
Thus (1) holds. 
(2) follows from (i). 
It remains to prove (3) and (5).

For (3), suppose that $x$ is maximal in $T_s$ and $F_s(x) \in N$. 
Fix $j$ such that $x = \beta_j$. 
Then $j \in S$, by the definition of $S$. 
Also $\sigma(x) = \gamma_j$. 
By (iv), 
$$
F_v(\sigma(x)) = F_v(\gamma_j) = \xi_j = F_s(\beta_j) = F_s(x).
$$

For (5), let $a$ be a maximal node of $T_s$, 
and suppose that $F_s(a) \in M_i \setminus N$ 
for some $i = 1, \ldots, k$. 
Fix $j$ such that $a = \beta_j$. 
Then $j \in P_i$, by the definition of $P_i$. 
So by (iii), $\gamma_j$ is maximal in $T_v$ and 
$$
F_v(\gamma_j) \in L_i \setminus (M_i \cap \beta_{M_i,N}).
$$
But $\gamma_j = \sigma(\beta_j) = \sigma(a)$. 
So 
$$
F_v(\sigma(a)) \in L_i \setminus (M_i \cap \beta_{M_i,N}).
$$
This completes the proof of the main claim.

\bigskip

Since $D$ is a dense subset of $N^* \cap \p$, we can fix $w \le v$ in $D$. 
We will show that $w$ and $s$ are compatible, which completes the proof. 
We define a condition $z = (T_z,F_z,A_z)$, and prove that 
$z \le w, s$.

\bigskip

First, let $A_z := A_s \cup A_w$. 
Note that $A_z$ is adequate by Proposition 3.9.

\bigskip

Secondly, we apply Lemma 7.4 to amalgamate the trees $T_w$ and $T_s$. 
Let $X$ be the set of all minimal nodes $a$ of $T_s \setminus N$ such that 
either $a$ is not minimal in $T_s$, or there is a maximal node $d$ with 
$a \le_{T_s} d$ and $F_s(d) \in N$. 
Note that in the second case, $d$ is unique, since otherwise by (4) in the definition 
of $\p$, $a$ would be in $N$. 
For each $a$ in $X$ which is not minimal in $T_s$, let $a^*$ be the 
immediate predecessor of $a$ in $T_s$. 
Recall that since $s$ satisfies property (2) of Lemma 7.8, 
$a^*$ and $b^*$ are distinct and 
incomparable for different $a$ and $b$.

We define an injective function 
$g : X \to |T_w|$ which will 
satisfy the assumptions of Lemma 7.4, namely, that:
\begin{enumerate}
\item[(a)] for all $a \in X$, if $a$ is not minimal 
in $T_s$, then $a^* \le_{T_w} g(a)$, and
$\{ t \in |T_s| : a^* <_{T_w} t \le_{T_w} g(a) \} = \emptyset$;
\item[(b)] if $a$ is minimal in $T_s$, 
then $\{ t \in |T_s| : t \le_{T_w} g(a) \} = \emptyset$.
\end{enumerate}
So fix $a \in X$, and we define $g(a)$.

\bigskip

\noindent Case 1: There does not exist a maximal node $d$ of $T_s$ such that 
$a \le_{T_s} d$ and $F_s(d) \in N$. 
Then by the definition of $X$, $a$ is not minimal in $T_s$. 
Let $g(a) = a^*$. 
Clearly, requirements (a) and (b) are satisfied.

\bigskip
 
\noindent Case 2: There exists a maximal node $d$ of $T_s$ such that 
$a \le_{T_s} d$ and $F_s(d) \in N$. 
Then $d$ is unique, as observed above. 
By (3) in the main claim, 
$$
F_v(\sigma(d)) = F_s(d).
$$
Since $w \le v$, by the definition of the ordering on $\p$ there is a unique 
maximal node $\sigma^+(d)$ of $T_w$ above $\sigma(d)$ 
such that 
$$
F_w(\sigma^+(d)) = F_v(\sigma(d)).
$$
Let $g(a) = \sigma^+(d)$. 
Then by the above equations, 
$$
F_s(d) = F_w(g(a)).
$$
Let us check that g(a) satisfies requirements (a) and (b).

(a) Assume that $a$ is not minimal in $T_s$. 
Then $a^* <_{T_s} a \le_{T_s} d$, so $a^* <_{T_s} d$. 
Since $a^* <_{T_s} d$ and $\sigma$ is an isomorphism which is the 
identity on $T_s \restriction N$, we have that 
$$
\sigma(a^*) = a^* <_{T_v} \sigma(d).
$$
Since $w \le v$ and $\sigma(d) \le_{T_w} \sigma^+(d) = g(a)$, 
we have that 
$$
a^* <_{T_w} \sigma(d) \le_{T_w} g(a),
$$
and hence $a^* <_{T_w} g(a)$, which proves the first part of (a).

For the second part of (a), 
suppose for a contradiction that there exists $t$ in $T_s$ such that 
$$
a^* <_{T_w} t \le_{T_w} g(a).
$$
Since $g(a) \in N \cap \omega_1$, also $t \in N$. 
As $T_w$ end-extends $T_s \restriction N$ and 
$a^*$ and $t$ are in $T_s \restriction N$, 
we have that $a^* <_{T_s} t$. 

Now 
$$
t = \sigma(t) \le_{T_w} g(a) = \sigma^+(d).
$$
Since also $\sigma(d) \le_{T_w} \sigma^+(d)$, we have that 
$\sigma(t) = t$ and $\sigma(d)$ are comparable in $T_w$, since 
$T_w$ is a tree. 
Hence they are comparable in $T_v$, since $T_w$ end-extends $T_v$ 
and $t$ and $\sigma(d)$ are in $T_v$. 
But $\sigma(d)$ is maximal in 
$T_v$, since $\sigma$ is an isomorphism. 
Therefore $\sigma(t) = t \le_{T_v} \sigma(d)$. 
It follows that $t \le_{T_s} d$, since $\sigma$ is an isomorphism. 
But $t$ is in $N$ and $d$ is not in $N$, so $t <_{T_s} d$. 

Now $a$ and $t$ are distinct nodes below $d$ in $T_s$, and 
$t < N \cap \omega_1 \le a$. 
So $t <_{T_s} a$, since $T_s$ is a tree.  
Hence we have that 
$$
a^* <_{T_s} t <_{T_s} a,
$$
which contradicts the fact that $a^*$ is the immediate predecessor of $a$ in $T_s$.

(b) Suppose that $a$ is minimal in $T_s$. 
Assume for a contradiction that $t \in |T_s|$ and 
$$
t \le_{T_w} g(a) = \sigma^+(d).
$$
Since $\sigma(d) \le_{T_w} \sigma^+(d)$, we have that 
$t$ and $\sigma(d)$ are comparable in $T_w$. 
But $t$ and $\sigma(d)$ are in $T_v$, so they are comparable in $T_v$, 
since $T_w$ end-extends $T_v$. 
As $\sigma(d)$ is maximal in $T_v$, we have that 
$$
\sigma(t) = t \le_{T_v} \sigma(d).
$$
Since $\sigma$ is an isomorphism, it follows that $t \le_{T_s} d$. 
Since also $a \le_{T_s} d$ and $t < N \cap \omega_1 \le a$, we have that 
$t <_{T_s} a$. 
This contradicts the assumption that $a$ is minimal in $T_s$.

This completes the proof that 
$g$ satisfies the assumptions of Lemma 7.4. 
It is easy to check by cases that $g$ is injective, using the fact that the map which 
sends a minimal node $a$ of $T_s \setminus N$ to its predecessor $a^*$ in 
$T_s$, if it exists, is injective.

\bigskip

Let $T_z := T_w \oplus_{X,g} (T_s \setminus N)$. 
Then by Lemma 7.4, $T_z$ end-extends $T_w$ and $T_s$. 
Moreover, the maximal nodes of $T_z$ are the maximal nodes of $T_s$ 
together with 
the maximal nodes of $T_w$ which are not in the range of $g$. 
Note that since $s$ satisfies property (1) of Lemma 7.8, 
any maximal node of $T_s$ is at least $N \cap \omega_1$, 
and so is not also a maximal node of $T_w$. 

\bigskip

Thirdly, we define the function $F_z$. 
Let $a$ be a maximal node of $T_z$. 
Then as just mentioned, 
there are two disjoint possibilities. 
First, suppose that $a$ is a maximal node of $T_s$. 
In this case, let $F_z(a) := F_s(a)$. 
Secondly, suppose that $a$ is a maximal node of $T_w$ which is 
not in the range of $g$. 
In this case, let $F_z(a) := F_w(a)$.

\bigskip

This completes the definition of $z$. 
We will be done if we can show that $z$ is a condition, and 
$z \le w, s$. 
The proof that $z$ is a condition will take some time. 
So let us temporarily assume that $z$ is a condition, and show that 
$z \le w, s$.

We already know that $T_z$ end-extends $T_w$ and $T_s$. 
Also $A_w, A_s \subseteq A_z$, by the definition of $A_z$.

To show that $z \le s$, let $c$ be maximal in $T_s$. 
Then $c \ge N \cap \omega_1$, since $s$ satisfies property (1) of Lemma 7.8. 
So $c$ is still maximal in $T_z$ and 
$F_z(c) = F_s(c)$. 
This proves that $z \le s$. 

To show that $z \le w$, let $c$ be maximal in $T_w$. 
If $c$ is still maximal in $T_z$, then 
then $F_z(c) = F_w(c)$, and we are done. 
Otherwise $c$ is in the range of $g$. 
Hence $c = g(y)$, for some minimal node $y$ of $T_s \setminus N$.

There are two possibilities, based on the case division in the definition of $g$. 
First, assume that case 1 in the definition of $g$ holds. 
Then $c = g(y) = y^*$, which is the predecessor of $y$ in $T_s$. 
Since $y^* <_{T_s} y$, it follows that 
$$
\sigma(y^*) = y^* <_{T_v} \sigma(y),
$$
since $\sigma$ is an isomorphism which 
is the identity on $T_s \restriction N$. 
As $T_w$ end-extends $T_v$, we have that 
$c = y^* <_{T_w} \sigma(y)$. 
But this contradicts the assumption that $c$ is maximal in $T_w$. 

Secondly, assume case 2 in the definition of $g$. 
Then there is a maximal node $d$ of $T_s \setminus N$ 
such that $F_s(d) \in N$, 
there is $y$ which is minimal in $T_s \setminus N$ such that 
$y \le_{T_s} d$, 
$$
c = g(y) = \sigma^+(d),
$$
and 
$$
F_w(c) = F_w(\sigma^+(d)) = F_v(\sigma(d)) = F_s(d),
$$
where the last equality holds by (3) of the main claim. 
Then $d$ is maximal in $T_z$, $c \le_{T_z} d$, and $F_w(c) = F_z(d)$, as required.
This completes the proof that $z \le w$.

\bigskip

In order to prove that $z$ is a condition, we verify requirements 
(1)--(4) in the definition of $\p$. 
(1) is clear, and for (3), we have already observed above that $A_z$ is adequate.

\bigskip

(2) Let us prove that $F_z$ is injective. 
Since $w$ and $s$ are conditions, $F_z$ is injective on the maximal nodes of $T_s$, 
and $F_z$ is injective on the maximal nodes of $T_w$ which are not in the 
range of $g$. 
So the only nontrivial case to consider is when $d$ is maximal in $T_s$ 
and $d'$ is maximal in $T_w$ but not in the range of $g$. 
Then $F_z(d) = F_s(d)$ and $F_z(d') = F_w(d')$.
We will show that $F_z(d) \ne F_z(d')$, that is, that 
$F_s(d) \ne F_w(d')$.

Since $w \in N^*$, $F_w(d') \in N$. 
So if $F_s(d) \notin N$, then $F_w(d') \ne F_s(d)$, and we are done. 
Assume that $F_s(d) \in N$. 
Let $a$ be the unique minimal node of $T_s \setminus N$ with 
$a \le_{T_s} d$. 
Since $F_s(d) \in N$, by case 2 in the definition of $g$, 
$$
g(a) = \sigma^+(d).
$$
But 
$$
F_w(g(a)) = F_w(\sigma^+(d)) = F_v(\sigma(d)),
$$
and by (3) in the main claim, 
$$
F_v(\sigma(d)) = F_s(d).
$$
So $F_w(g(a)) = F_s(d)$.

Since $d'$ is maximal in $T_z$, it is not in the range of $g$; hence 
$d' \ne g(a)$. 
Since $F_w$ is injective, $F_w(d') \ne F_w(g(a))$. 
So by the definition of $F_z$ and the fact that 
$F_w(g(a)) = F_s(d)$, we have 
$$
F_z(d') = F_w(d') \ne F_w(g(a)) = F_s(d) = F_z(d).
$$
So $F_z(d') \ne F_z(d)$, as required.

\bigskip

(4) Let $M \in A_z$, and assume that $a$ and $b$ are 
distinct maximal nodes of $T_z$ 
such that $F_z(a), F_z(b) \in M$. 
Let $c <_{T_z} a, b$. 
We will prove that $c \in M$. 

If either of $a$ or $b$ are in $M$, 
then so is $c$ because $M \cap \omega_1$ is an ordinal. 
So assume that neither $a$ nor $b$ is in $M$.

Let us first handle the case when $c$ is not in $N$. 
Then neither are $a$ and $b$, since $N \cap \omega_1$ is an ordinal 
and $c$ is less than $a$ and $b$.  
So $a, b, c$ are in $T_s$. 
If $M \in A_s$, then we are done since $s$ is a condition. 
If $M$ is not in $A_s$, then $M$ is in $A_w$ and hence in $Sk(N)$. 
Since $F_s(a)$ and $F_s(b)$ are in $M$ and $M \subseteq N$, 
$F_s(a)$ and $F_s(b)$ are in $N$. 
By requirement (4) of $s$ being a condition, 
it follows that $c \in N$, which contradicts 
our assumption that $c$ is not in $N$.

For the remainder of the proof we will assume that $c$ is in $N$. 
If $N \cap \beta_{M,N}$ is either equal to $M \cap \beta_{M,N}$ or in $Sk(M)$, 
then 
$$
c \in N \cap \omega_1 = (N \cap \beta_{M,N}) \cap \omega_1 
\subseteq M,
$$
so $c \in M$ and we are done. 
Thus for the remainder of the proof we will assume that 
$M \cap \beta_{M,N} \in Sk(N)$.

\bigskip

\noindent Case A: $F_z(a), F_z(b) \in N$. 
Then 
$$
F_z(a), F_z(b) \in M \cap N \subseteq M \cap \beta_{M,N}.
$$
Note that there are $a'$ and $b'$ maximal in $T_w$ such that 
$$
a' \le_{T_z} a, \ b' \le_{T_z} b, \ 
F_w(a') = F_z(a), \ \textrm{and} \ F_w(b') = F_z(b).
$$
Namely, if $a$ is in $N$, then let $a' := a$, and if $b$ is in $N$, then let $b' := b$. 
If $a$ is not in $N$, then let $a' := \sigma^+(a)$, and similarly with $b$. 
Then $a'$ and $b'$ are as desired.

Since $a'$ and $b'$ are maximal in $T_w$, $c \in N$, and 
$T_z \restriction N = T_w$, we have that 
$$
c \le_{T_w} a', b'.
$$
Also note that since $F_z(a) \ne F_z(b)$, also 
$F_w(a') \ne F_w(b')$, which implies that $a' \ne b'$.
Therefore $c$ cannot equal $a'$ or $b'$, since $a'$ and $b'$ are incomparable. 
So $c <_{T_w} a', b'$. 
Since $F_w(a'), F_w(b') \in M \cap \beta_{M,N}$, it follows that 
$c \in M \cap \beta_{M,N}$, by requirement (4) of $w$ being a condition. 
So $c \in M$, and we are done.

\bigskip

\noindent Case B: $a$ and $b$ are in $T_s \setminus N$, and at least one of 
$F_z(a)$ or $F_z(b)$ is not in $N$. 
Without loss of generality, assume that $F_z(b) \notin N$. 
Since $F_z(b) \in M$, it follows that $M$ is not in $Sk(N)$, and hence $M$ 
is in $A_s$. 
Fix $i$ such that $M = M_i$.

Fix $x$ and $y$ minimal in $T_s \setminus N$ which are below $a$ and $b$ 
respectively. 
Note that as $c < N \cap \omega_1$, we have that 
$c <_{T_z} x, y$. 
If $x = y$, then $x <_{T_s} a, b$. 
It follows that $x \in M$, since $s$ is a condition. 
Since $c < x$, this implies that $c \in M$, and we are done.

So assume that $x \ne y$. 
Then $g(x) \ne g(y)$, since $g$ is injective. 
As $c <_{T_z} x, y$, and $g(x)$ and $g(y)$ are the immediate predecessors 
of $x$ and $y$ in $T_z$, we have that 
$c \le_{T_z} g(x), g(y)$. 
So $c \le_{T_w} g(x), g(y)$.

We claim that $c$ is below $\sigma(x)$ and $\sigma(y)$ in $T_w$. 
Note that $c$ and $\sigma(x)$ are comparable in $T_w$. 
For in case 1 of the definition of $g$, $g(x) = x^* <_{T_w} \sigma(x)$, 
and in case 2, $\sigma(x) \le_{T_w} g(x)$; both of these cases imply that 
$c$ and $\sigma(x)$ are comparable in $T_w$. 
Similarly, $c$ and $\sigma(y)$ are comparable in $T_w$.

But $x$ and $y$ are incomparable in $T_s$. 
So $\sigma(x)$ and $\sigma(y)$ are incomparable in $T_v$, since 
$\sigma$ is an isomorphism, and hence are incomparable in $T_w$. 
This implies that 
$$
c <_{T_w} \sigma(x), \sigma(y),
$$
since any other 
relation of $c$ with $\sigma(x)$ and $\sigma(y)$ would yield that 
$\sigma(x)$ and $\sigma(y)$ are comparable in $T_w$.

Now $\sigma(x) \le_{T_v} \sigma(a)$ and $\sigma(y) \le_{T_v} \sigma(b)$, 
since $\sigma$ is an isomorphism. 
As $T_w$ end-extends $T_v$, 
$\sigma(x) \le_{T_w} \sigma(a)$ and $\sigma(y) \le_{T_w} \sigma(b)$. 
But $c <_{T_w} \sigma(x), \sigma(y)$, as just noted. 
Therefore 
$$
c <_{T_w} \sigma(a), \sigma(b).
$$
We claim that $F_w(\sigma(a))$ and $F_w(\sigma(b))$ are in $L_i$. 
As $w$ is a condition, 
this implies that $c$ is in $L_i \cap \omega_1 = M \cap \omega_1$, 
which finishes the proof.

By our  assumption, 
$$
F_s(b) \in M_i \setminus N.
$$
By (5) of the main claim, 
$$
F_v(\sigma(b)) \in L_i.
$$
For $a$, there are two possibilities. 
If $F_s(a) \notin N$, then 
$$
F_s(a) \in M_i \setminus N,
$$
which by (5) of the main claim implies that 
$$
F_v(\sigma(a)) \in L_i.
$$
Otherwise $F_s(a) \in N$, so $F_s(a) \in M \cap N \subseteq M \cap \beta_{M,N}$. 
But $M \cap \beta_{M,N} \subseteq L_i$, so $F_s(a) \in L_i$.

\bigskip

\noindent Case C: At least one 
of $a$ or $b$ is not in $T_s \setminus N$, and at least one of $F_z(a)$ or $F_z(b)$ 
is not in $N$. 
Without loss of generality, assume that $a$ is not in $T_s \setminus N$. 
Then $a$ is in $T_w$. 
It follows that $F_z(a) = F_w(a)$, which is in $N$. 
Therefore $F_z(b) \notin N$. 
In particular, $b$ is in $T_s \setminus N$.
Also since $F_z(b) \in M \setminus N$, $M$ is not in $Sk(N)$. 
So $M$ is in $A_s$. 
To summarize, $a$ is in $T_w$, $b$ is in $T_s \setminus N$, 
$M$ is in $A_s$, and $F_z(b) \notin N$.

We have that 
$$
F_z(a) \in M \cap N \subseteq M \cap \beta_{M,N}.
$$
Since $F_z(b) \in M \setminus N$, $M \setminus \beta_{M,N}$ is nonempty. 
Fix $i = 1, \ldots, k$ such that $M = M_i$. 
Let $y$ be the minimal node of $T_s \setminus N$ below $b$.

\bigskip

\noindent Subcase C(i): There is a maximal node $d$ in $T_s$ above $y$ 
such that $F_s(d) \in N$. 
Note that $d \ne b$, since 
$F_s(b) = F_z(b) \notin N$. 
By the definition of $g$, 
we have that $g(y) = \sigma^+(d)$ and 
$F_w(g(y)) = F_s(d)$.

We claim that $c \le_{T_w} g(y)$. 
Since $c <_{T_z} b$, $y <_{T_z} b$, and $c < y$, it follows that 
$c <_{T_z} y$. 
Since $g(y)$ is the immediate predecessor of $y$ in $T_z$, 
we have that $c \le_{T_z} g(y)$. 
But $T_z$ end-extends $T_w$, so $c \le_{T_w} g(y)$.

So we have that $c \le_{T_w} a, g(y)$. 
Since $y <_{T_s} d$ and $\sigma$ is an isomorphism, 
$\sigma(y) <_{T_v} \sigma(d)$. 
So
$$
\sigma(y) <_{T_w} \sigma(d) \le_{T_w} \sigma^+(d) = g(y).
$$
As $c$ and $\sigma(y)$ are both below $g(y)$ in $T_w$, they are 
comparable in $T_w$.

We claim that $c <_{T_w} \sigma(y)$. 
Suppose for a contradiction that $\sigma(y) \le_{T_w} c$. 
Since $c <_{T_w} a$, it follows that 
$\sigma(y) <_{T_w} a$. 
Now $y <_{T_s} b$ implies that 
$\sigma(y) <_{T_v} \sigma(b)$, and hence 
$\sigma(y) <_{T_w} \sigma(b)$. 
Since $w \le v$, we can fix a maximal node $\sigma^+(b)$ of $T_w$ 
which is above $\sigma(b)$ such that 
$F_w(\sigma^+(b)) = F_v(\sigma(b))$. 
Then $\sigma(y) <_{T_w} \sigma^+(b)$.

Recall that $M = M_i$ and $L_i$ end-extends $M \cap \beta_{M,N}$. 
By (5) of the main claim, since $F_s(b) \in M \setminus N$, we have that 
$$
F_w(\sigma^+(b)) = F_v(\sigma(b)) \in L_i.
$$
Also as observed at the beginning of case C, 
$$
F_w(a) = F_z(a) \in M \cap \beta_{M,N} \subseteq L_i.
$$
Since $\sigma(y) <_{T_w} a, \sigma^+(b)$, by requirement (4) of 
$w$ being a condition it follows that 
$$
\sigma(y) \in L_i \cap \omega_1.
$$
But $L_i$ end-extends $M \cap \beta_{M,N}$ and 
$\omega_1 < \beta_{M,N}$. 
Therefore 
$$
\sigma(y) \in L_i \cap \omega_1 = M_i \cap \omega_1.
$$
But this contradicts (2) of the main claim.

This contradiction completes the proof that $c <_{T_w} \sigma(y)$. 
It follows that 
$$
c <_{T_w} \sigma(y) <_{T_w} \sigma(b) \le_{T_w} \sigma^+(b),
$$
so $c <_{T_w} \sigma^+(b)$. 
Also we are assuming that $c <_{T_w} a$. 
Now 
$$
F_w(a) = F_z(a) \in M \cap \beta_{M,N} \subseteq L_i,
$$ 
and by (5) of the main claim, 
$$
F_w(\sigma^+(b)) = F_v(\sigma(b)) \in L_i.
$$
Since $c <_{T_w} a, \sigma(y)$, by requirement (4) of $w$ being a condition, 
we have that 
$$
c \in L_i \cap \omega_1 = M \cap \omega_1.
$$
This completes the proof that $c$ is in $M$.

\bigskip

\noindent Subcase C(ii): There is no maximal node $d$ of $T_s$ above $y$ 
such that $F_s(d) \in N$. 
Then by the definition of $g$, $g(y) = y^*$, where $y^*$ is the 
predecessor of $y$ in $T_s$. 
Now $c$ is below $b$ in $T_z$ and hence below $y$. 
Since $g(y) = y^*$ is the immediate predecessor of $y$ in $T_z$, 
$c \le_{T_z} g(y)$. 
Therefore $c \le_{T_w} g(y)$. 
Hence 
$$
c \le_{T_w} g(y) = y^* = \sigma(y^*) <_{T_w} \sigma(y) \le_{T_w} 
\sigma(b) \le_{T_w} \sigma^+(b),
$$
where $\sigma^+(b)$ is the maximal node of $T_w$ above $\sigma(b)$ 
such that $F_v(\sigma(b)) = F_w(\sigma^+(b))$. 
So 
$$
c <_{T_w} a, \sigma^+(b).
$$

By property (5) of the main claim, since $F_s(b) \in M \setminus N$, 
$$
F_w(\sigma^+(b)) = F_v(\sigma(b)) \in L_i \setminus M.
$$
But $F_w(a) \in M$. 
It follows that $a \ne \sigma^+(b)$. 
Since $F_w(a) = F_z(a) \in M \cap \beta_{M,N} \subseteq L_i$ and 
$F_w(\sigma^+(b)) \in L_i$, 
by property (4) in the definition of $\p$ we have that 
$$
c \in L_i \cap \omega_1 = M \cap \omega_1.
$$
So $c \in M$, and we are done.
\end{proof}

\begin{proposition}
The forcing poset $\p$ is $\omega_2$-c.c.
\end{proposition}

\begin{proof}
We will use Lemma 4.3. 
Let $\theta > \omega_2$ be regular. 
Fix $N^* \prec H(\theta)$ of size $\omega_1$ such that 
$\p, \pi, \mathcal X \in N^*$ and 
$\beta^* := N^* \cap \omega_2 \in \Gamma$. 
Note that since $\Gamma$ is stationary, there are stationarily many such 
models $N^*$ in $P_{\omega_2}(H(\theta))$.

Observe that as $\pi \in N^*$ and $\pi : \omega_2 \to H(\omega_2)$ 
is a bijection, by elementarity we have that 
$$
N^* \cap H(\omega_2) = \pi[N^* \cap \omega_2] = \pi[\beta^*] 
= Sk(\beta^*),
$$
where the last equality holds by Lemma 1.3 and the fact that 
$\beta^* \in \Gamma$ implies that $Sk(\beta^*) \cap \omega_2 = \beta^*$. 
In particular, $N^* \cap \p \subseteq Sk(\beta^*)$.

We will prove that the empty condition is strongly $(N^*,\p)$-generic. 
By Lemma 4.3, this implies that $\p$ is $\omega_2$-c.c. 
So fix a set $D$ which is a dense subset of $N^* \cap \p$, and we will 
show that $D$ is predense in $\p$.

\bigskip

Let $q$ be a condition. 
We will find a condition in $D$ which is 
compatible with $q$. 
First, we extend $q$ to prepare for intersecting with $N^*$. 
Define $r$ by letting $T_r := T_q$, $F_r := F_q$, and 
$$
A_r := A_q \cup \{ M \cap \beta^* : M \in A_q \}.
$$
By the comments after Definition 7.5, $r$ is a condition, and clearly $r \le q$. 

We will show that there is a condition in $D$ which is compatible with $r$. 
Since $r \le q$, it follows that there is a condition in $D$ which is compatible 
with $q$, which completes the proof.

\bigskip

Note that since $\omega_1$ is a subset of $N^*$, the tree $T_r$ is actually 
a member of $N^*$. 

Let $M_1, \ldots, M_k$ list the elements $M$ of $A_r$ such that 
$M \setminus \beta^*$ is nonempty. 
Define $P_1, \ldots, P_k$ which are subsets of $|T_r|$ by letting 
$a \in P_i$ iff $a$ is maximal in $T_r$ and 
$F_r(a) \in M_i \setminus \beta^*$. 
Let $S$ be the set of maximal nodes $a$ of $T_r$ such that $F_r(a) < \beta^*$. 
For each $a \in S$, let $\xi_a := F_r(a)$.

To find a condition in $D$ which is compatible with $r$, we first need to find a 
condition in $N^*$ which reflects some information about $r$.

\bigskip

\noindent \textbf{Main Claim:} There exists a condition $v \in N^*$ satisfying:
\begin{enumerate}
\item $T_v = T_r$;
\item if $a$ if maximal in $T_r$ and $F_r(a) < \beta^*$, then 
$F_v(a) = F_r(a)$;
\item there are $L_1, \ldots, L_k$ in $A_v$ such that $L_i$ end-extends 
$M_i \cap \beta^*$ for all $i = 1, \ldots, k$;
\item if $a$ is maximal in $T_r$ and $F_r(a) \in M_i \setminus \beta^*$, 
then $F_v(a) \in L_i \setminus (M_i \cap \beta^*)$;
\item $A_r \cap P(\beta^*) \subseteq A_v$.
\end{enumerate}

\bigskip

We prove the claim. 
The objects $r$ and $M_1, \ldots, M_k$ witness the statement that 
there exists a condition $v$ and $L_1, \ldots, L_k$ satisfying:
\begin{enumerate}
\item[(i)] $T_v = T_r$;
\item[(ii)] if $a \in S$, then $F_v(a) = \xi_a$;
\item[(iii)] there are $L_1, \ldots, L_k$ in $A_v$ which end-extend 
$M_1 \cap \beta^*, \ldots, M_k \cap \beta^*$;
\item[(iv)] for all $a \in |T_v|$ and $i = 1, \ldots, k$, $a \in P_i$ iff 
$a$ is maximal in $T_v$ and $F_v(a) \in L_i \setminus (M_i \cap \beta^*)$;
\item[(v)] $A_r \cap P(\beta^*) \subseteq A_v$.
\end{enumerate}
Now the parameters which appear in the above statement, namely, 
$T_r$, $S$, $\langle \xi_a : a \in S \rangle$, 
$M_1 \cap \beta^*, \ldots, M_k \cap \beta^*$, 
$P_1, \ldots, P_k$, and $A_r \cap P(\beta^*)$, are all members of $N^*$. 
By the elementarity of $N^*$, we can fix a condition $v$ and 
$L_1, \ldots, L_k$ which are members of $N^*$ 
and satisfy the same statement. 
It is easy to check that $v$ satisfies the properties listed in the main claim.

\bigskip

Since $D$ is dense in $N^* \cap \p$, we can fix $w \le v$ in $D$. 
We will show that $r$ and $w$ are compatible, 
which finishes the proof.

\bigskip

We will define a condition $z = (T_z,F_z,A_z)$, and then show that 
$z \le w, r$. 
Let $A_z := A_r \cup A_w$.

Note that $T_w$ is an end-extension of $T_v = T_r$. 
Let us describe how to extend $T_w$ to $T_z$. 
In addition to having the original nodes of $T_w$, we will also split above certain 
nodes of $T_w$ as follows.

Let $Z$ be the set of maximal nodes $a$ of $T_r$ such that 
$F_r(a) \ge \beta^*$. 
For each $a \in Z$, let $a^+$ be the unique maximal node above $a$ in $T_w$ 
such that $F_v(a) = F_w(a^+)$. 
Now add above $a^+$ two immediate 
successors $a_0$ and $a_1$. 
This describes the tree $T_z$.

Define $F_z$ as follows. 
Let $b$ be a maximal node of $T_z$. 
Then either $b$ is equal to $a_0$ or $a_1$ for some $a \in Z$, or 
$b$ is maximal in $T_w$. 
In the second case, let $F_z(b) := F_w(b)$. 
In the first case, we let 
$$
F_z(a_0) := F_w(a^+) \ \textrm{and} \ F_z(a_1) := F_r(a).
$$
Note that $F_z(a_0) < \beta^*$ and $F_z(a_1) \ge \beta^*$.

\bigskip

This completes the definition of $z$. 
Let us prove that $z$ is a condition. 
Requirements (1) and (3) in the definition of $\p$ are immediate, using 
Proposition 3.11. 
For (2), the proof that $F_z$ is injective 
splits into a large number of cases, each of which is completely trivial. 
So we leave the straightforward verification to the reader. 
It remains to prove (4).

\bigskip

(4) Suppose that $M \in A_z$, 
and $c$ and $d$ are distinct maximal nodes of $T_z$ such that  
$F_z(c)$ and $F_z(d)$ are in $M$. 
Let $e <_{T_z} c, d$. 
We will show that $e \in M$.

\bigskip

\noindent Case 1: First assume that $F_z(c), F_z(d) < \beta^*$. 
Then $c$ is either maximal in $T_w$ or is equal to $a_0$ for some $a \in Z$, 
and similarly with $d$. 
It is easy to check that in each of these four cases, the node 
$e$ is below two maximal nodes of $T_w$ which $F_w$ maps into $M \cap \beta^*$. 
Since $M \cap \beta^* \in A_w$ and $w$ is a condition, it follows that 
$e \in M \cap \beta^*$. 
Hence $e \in M$.

\bigskip

\noindent Case 2: Now assume that $F_z(c), F_z(d) \ge \beta^*$. 
Then $c = a_1$ and $d = b_1$, where $a$ and $b$ are distinct nodes in $Z$. 
Since $e$ is below $c$ and $d$, $e$ is comparable with both $a$ and $b$. 
As $a$ and $b$ are incomparable in $T_v$ and hence in $T_w$, 
we cannot have that $a$ or $b$ is below $e$, 
since that would imply that $a$ and $b$ are comparable. 
Hence 
$$
e <_{T_w} a, b.
$$

Since $F_z(c) \in M \setminus \beta^*$, we can fix $i$ such that $M = M_i$. 
Then 
$$
F_r(a) = F_z(a_1) = F_z(c) \in M_i \setminus \beta^*
$$
and 
$$
F_r(b) = F_z(b_1) = F_z(d) \in M_i \setminus \beta^*.
$$
By (4) of the main claim, 
$$
F_z(a_0) = F_w(a^+) = F_v(a) \in L_i
$$
and 
$$
F_z(b_0) = F_w(b^+) = F_v(b) \in L_i.
$$
As $e$ is below $a$ and $b$, obviously $e <_{T_z} a_0, b_0$. 
By Case 1, $e \in L_i \cap \omega_1 \subseteq M$.

\bigskip

\noindent Case 3: Assume that 
$F_z(c) \ge \beta^*$ and $F_z(d) < \beta^*$. 
Then $c = a_1$ for some $a \in Z$, and 
$d$ is either equal to $b_0$ for some $b \in Z$ or is maximal in $T_w$. 
Then 
$$
F_z(c) = F_z(a_1) = F_r(a).
$$

Since $F_z(c) \in M \setminus \beta^*$, we can fix $i$ such that $M_i = M$. 
Then 
$$
F_r(a) = F_z(c) \in M_i \setminus \beta^*.
$$
By (4) of the main claim, 
$$
F_z(a_0) = F_w(a^+) = F_v(a) \in L_i \setminus (M \cap \beta^*).
$$
Note that $d$ is not equal to $a_0$. 
For otherwise $F_z(d) \in L_i \setminus (M \cap \beta^*)$, which contradicts 
our assumption that $F_z(d) \in M \cap \beta^*$.

Now $e <_{T_z} c = a_1$ implies that $e \le_{T_w} a^+$. 
But if $e = a^+$, then $a^+ <_{T_z} d$, which implies that 
$d = a_0$, which we just showed is not true. 
So $e <_{T_w} a^+$.
As observed above, $F_w(a^+) \in L_i$.

If $d$ is maximal in $T_w$ and not equal to any $b_0$, 
then 
$$
F_w(d) = F_z(d) \in M \cap \beta^* \subseteq L_i.
$$
Since $e <_{T_w} a^+, d$, and 
$F_w(a^+)$ and $F_w(d)$ are in $L_i$, then since $w$ is a condition, 
$$
e \in L_i \cap \omega_1 \subseteq M.
$$
So $e \in M$, and we are done.

The other possibility is that 
$d$ is not maximal in $T_w$, and $d = b_0$ for some $b \in Z$. 
We observed above that $d \ne a_0$. 
Therefore $a \ne b$. 
So $a^+ \ne b^+$. 
Since $e$ is below $a_0$ and $b_0$, we have that 
$e \le_{T_w} a^+, b^+$. 
Since $a^+$ and $b^+$ are distinct maximal nodes of $T_w$, 
they are incomparable, and hence 
$e <_{T_w} a^+, b^+$. 
But $F_w(a^+) \in L_i$, \
and 
$$
F_w(b^+) = F_z(b_0) = F_z(d) \in M \cap \beta^* \subseteq L_i.
$$
Since $w$ is a condition, it follows that 
$$
e \in L_i \cap \omega_1 \subseteq M.
$$
So $e \in M$, and we are done.

\bigskip

\noindent Case 4: 
The case when $F_z(d) \ge \beta^*$ and $F_z(c) < \beta^*$ is the same 
as case 3, with the roles of $c$ and $d$ reversed.

\bigskip

This completes the proof that $z$ is a condition. 
Now we show that $z \le w, r$. 
Obviously $T_z$ end-extends $T_w$ and $T_r$, 
and by definition, $A_r$ and $A_w$ are subsets of $A_z$.

To show that $z \le w$, let $c$ be maximal in $T_w$. 
If $c$ remains maximal in $T_z$, then $F_z(c) = F_w(c)$, and we are done. 
Otherwise $c = a^+$ for some $a \in Z$, and $a_0$ and $a_1$ were added above $c$. 
By definition, 
$$
F_z(a_0) = F_w(a^+) = F_w(c).
$$
This proves that $z \le w$.

To show that $z \le r$, 
suppose that $d$ is maximal in $T_r$. 
There are two cases depending on whether $F_r(d) < \beta^*$ or 
$F_r(d) \ge \beta^*$. 
Assume first that $F_r(d) < \beta^*$. 
Then by (2) of the main claim, $F_r(d) = F_v(d)$. 
Let $d^+$ be the unique maximal node of $T_w$ above $d$ such that 
$F_w(d^+) = F_v(d)$. 
Then by the definition of $T_z$, $d^+$ is still maximal in $T_z$, 
and 
$$
F_z(d^+) = F_w(d^+) = F_v(d) = F_r(d).
$$
Now assume the other case that $F_r(d) \ge \beta^*$. 
Then $d \in Z$, and by the definition of $T_z$ and $F_z$, $d_1$ is a maximal node 
of $T_z$ above $d$, and 
$$
F_z(d_1) = F_r(d).
$$
This proves that $z \le r$.
\end{proof}

\bibliographystyle{plain}
\bibliography{paper21}

\end{document}